    \def\@seccntformat#1{%
    \protect\textup{%
        \protect\@secnumfont
        \expandafter\protect\csname format#1\endcsname 
        \csname the#1\endcsname
        \protect\@secnumpunct
    }%
    }
\title[Finiteness of heights in isogeny classes of motives]{Finiteness of heights in isogeny classes of motives with semistable reduction}
\author{Alice Lin}
\address{Department of Mathematics\\Harvard University}
\email{alicelin@math.harvard.edu}
\date{2025}
\begin{document}
\begin{abstract}
    Using integral p-adic Hodge theory, Kato and Koshikawa define a generalization of the Faltings height of an abelian variety to motives defined over a number field. Assuming the adelic Mumford-Tate conjecture, we prove a finiteness property for heights in the isogeny class of a motive, where the isogenous motives are not required to be defined over the same number field. This expands on a result of Kisin and Mocz for the Faltings height in isogeny classes of abelian varieties.
\end{abstract}

\maketitle
\tableofcontents

\numberwithin{equation}{section}



\section{Introduction}

Heights are a useful tool to prove finiteness theorems in arithmetic geometry over global fields, as they provide a method of measuring the size of a given arithmetic object defined over a global field in terms of the various absolute values of its defining coordinates. For example, in proving the Mordell conjecture, Faltings introduces and utilizes a height $h_{\Fal}(A)$ for any abelian variety $A$ defined over a number field \cite{faltings1983germanmordell,faltings1986finiteness}. One major input is the Shafarevich conjecture for abelian varieties over number fields: for any number field $F$ and finite set $S$ of finite places of $F$, there are only finitely many isomorphism classes of abelian varieties of a fixed dimension defined over $F$ with good reduction outside of $S$. In particular, Faltings shows that the number of abelian varieties defined over $K$ in a fixed isogeny class is finite \cite[proof of Theorem 6]{faltings1986finiteness}.

We use the following to denote the isogeny class over $\Qbar$ of an abelian variety $A$ defined over a number field.
\[
    \mscr I(A)_{\ol{\Q}} := \{ B_{/F'}\mid \text{$\exists$ an isogeny $A \to B$ defined over some field $F' \subset \ol{\Q}$} \}.
\]
This set contains infinitely many isomorphism classes.
However, explicit computations in the case of elliptic curves suggest the following conjecture, formulated by Mocz: For any $c > 0$ the geometric isogeny class of a given abelian variety contains only finitely many isomorphism classes of Faltings height less than $c$. This is true in the case of CM elliptic curves, where one has a formula for $h_{\Fal}(E)$ due to Deligne \cite{deligne1985preuve} and extended by Nakkajima--Taguchi \cite{nakkajima1991generalization}. Szpiro and Ullmo prove the case for non-CM elliptic curves using Arakelov intersection theory \cite{szpiro1999variation}. In \cite{mocz2018new}, Mocz considers the case of CM abelian varieties using integral $p$-adic Hodge theory.

Kisin and Mocz prove the following result for abelian varieties satisfying the Mumford--Tate conjecture.
\begin{thmsimple}[\cite{kisinmocznorthcott}] \label{thm:kisinmocz}
    Let $A$ be an abelian variety defined over a number field. Assume that the Mumford--Tate conjecture holds for $A$. Then for any $c > 0$, the set
    \[
        \{ B \in \mscr I(A)_{\ol{\Q}} \mid h_{\Fal}(B) < c\}
    \]
    contains finitely many isomorphism classes of abelian varieties over $\ol{\Q}$. 
\end{thmsimple}

The main goal of this paper is to prove an analogous result for the Kato--Koshikawa height, which is a generalization of the Faltings height to motives. The setup is somewhat technical, so let us first mention an application to Shimura varieties. 

As a result of \Cref{thm:kisinmocz}, Kisin and Mocz prove a finiteness result for the Faltings height in the Hecke orbits of Galois-generic points on Hodge type Shimura varieties. Namely, Let $\mc H$ be the Hecke orbit of an algebraic point $x_0$ on a Shimura variety of Hodge type $\Sh(G,X)$. If the image of the Galois representation attached to $x_0$ is Zariski dense in $G$, then for any $c> 0$, the following set is finite:
    \[
        \{ x \in \mc H \mid h_{\Fal}(x) < c\}.
    \]
The Faltings height of a point $x \in \Sh(G,X)$ is well-defined because Hodge type Shimura varieties can be viewed as moduli spaces of abelian varieties. 
As a corollary of our main theorem, we prove the following for any Shimura variety. 
\begin{thmintro}\label{mainthmA}
    Let $\mc H$ be the Hecke orbit of an algebraic point $x_0$ on a Shimura variety $\Sh(G,X)$. Suppose that the image of the Galois representation attached to $x_0$ is open in the finite adelic points $G(\A_f)$. Then for any 
    $c> 0$, the following set is finite:
    \[
        \{ x \in \mc H \mid h(x) < c\}.
    \]
\end{thmintro}
This extends the corollary of Kisin--Mocz to Shimura varieties without a moduli interpretation in terms of abelian varieties, namely the exceptional Shimura varieties. 

To prove \Cref{mainthmA}, we use the height defined by Kato \cite{kato2014heights} and Koshikawa \cite{koshikawa2015heights} for a \emph{Koshikawa motive}, by which we mean any system of realizations coming from a Grothendieck motive over a number field with an integral structure on the adelic \'etale realization. This is referred to as a ``pure $\Z$-motive'' in \cite{koshikawa2015heights}. See \S \ref{section:heightsmotives} for a precise definition. Such families of Koshikawa motives exist for all Shimura varieties, including exceptional ones. 



The Kato--Koshikawa height is a direct generalization of the Faltings height, in that both are defined using an integral structure on the associated graded of the de Rham realization together with a norm at each infinite place of $F$ defined using the Hodge polarization. \Cref{mainthmA} is an application of the following theorem.

\begin{thmintro} \label{mainthmB}
    Let $ M$ be a Koshikawa motive
    defined over a number field $F$ with semistable reduction everywhere. 
    Assume the adelic Mumford--Tate conjecture for $ M$ and that the Hodge classes of $ M$ are absolute Hodge and de Rham (see \Cref{conj:nonHodgemaxadelicMT}, \ref{conj:blasius}, \ref{conj:absolutehodge}). Then the set
    \[
    \{  M' \in \mscr{I}(M)_{\ol{\Q}} \mid \; h(M')<c \}
    \]
    attains only finitely many height values.
\end{thmintro}

\begin{rmkunnumbered}
    If we consider an isogeny class of Koshikawa motives coming from fibers of a finite type family $\mc X \to S$ of smooth projective varieties over an $F$-variety $S$ satisfying some mild conditions \cite[\S 6]{koshikawa2015heights} and a quasifinite Hodge-theoretic period map, then one can upgrade the statement of the theorem to finiteness of isomorphism classes of Koshikawa motives rather than finiteness of height values. 
\end{rmkunnumbered}

Here, an isogeny $M' \to M$ defined over $F'$ refers to a finite-index inclusion of the integral structure of $M'$ into that of $M$ which is $\Gal(\ol{F'}/F')$-equivariant. The underlying vector spaces for $M$ and $M'$ are otherwise identical. This mirrors the situation of abelian varieties: one can define an isogeny $A\to B$ defined over $F'$ as a $\Gal(\ol{F'}/F')$-equivariant finite-index inclusion of $\hat{\Z}$-modules $\prod_p T_p(A) \to \prod_p T_p(B)$. 

We remark that the adelic Mumford--Tate conjecture is a stronger version of the Mumford--Tate conjecture, but the two are equivalent in the case of abelian varieties \cite{cadoret2020integral}. The conditions on the Hodge classes of $M$ are known to be true for abelian varieties due to Deligne \cite{deligne1982hodge} and Blasius \cite{blasius1994padic}, respectively. These conditions on the Hodge classes are also satisfied for the Hodge structures coming from algebraic points on a Shimura variety. 

\subsection{Proof outline}

The proof of \Cref{mainthmB} centers around a formula due to Koshikawa for the change of height $h(M')-h(M)$ when $M' \to M$ is an isogeny of motives. The framework of this argument follows \cite{kisinmocznorthcott}, which studies the analogous isogeny formula for the Faltings height. 
Any isogeny factors into a sequence of prime-power isogenies, so without loss of generality, we may assume that the isogeny is determined by the finite-index inclusion $T' \subsetneq T \subset M_{\et,p}$ of $\Zp$-lattices in the $p$-adic \'etale realization $M_{\et,p}$. The formula for $h(M')-h(M)$ is in terms of the torsion Galois representation $T/T'$ \cite[8.1]{koshikawa2015heights}.
Suppose that the $\Zp$-module $T'$ is only stable under $\Gal(\ol{F}/F')$ for a finite extension $F'/F$. If $T$ was $\Gal(\ol{F}/F)$-stable, then such a finite extension $F'$ exists for any finite-index sublattice $T'$. We can then adapt the isogeny formula of Koshikawa to show 
    \begin{align}
        h({ M}') - h({{M}})
        &= \log \#(T/T')  \left(  \frac{w}{2} - \nu(T, T')\right),
        \\
    	\nu(T,T')
        &=  \sum_{\substack{v\mid p \\ \text{of $F$}}} \frac{[F_v:\Qp]}{[F:\Q]} \int_{\sigma \in \Gal_F} \mu \left(\frac{\mf M_{v}(T)}{\mf M_{v}(\sigma T')}\right) d\sigma.\label{eqn:nuGaloisorbitsintro}
    \end{align}
Here, $w$ is the weight of the motive $M$, and $\nu(T,T')$ is a weighted average of invariants (slopes) attached to torsion Breuil--Kisin--Fargues modules $\mf M_v(T)/\mf M_v(T')$. As the notation suggests, $\mf M_v(T)$ and $\mf M_v(T')$ are functorially attached to $\Zp$-lattices in semistable $p$-adic Galois representations after choosing an embedding $\ol{F} \into \ol{\Qp}$ extending a place $v \mid p$ of $F$ \cite{kisin2006crystalline}. The cokernel of $\vphi: \mf M_v(T) \to \mf M_v(T)$ contains the data of the Hodge--Tate decomposition of the Galois representation $T \otimes \Qp$.
For a torsion quotient of finite free Breuil--Kisin--Fargues modules $\mf M$ and $\mf M'$, the slope $\mu(\mf M/\mf M')$ measures the relative size of the cokernels of Frobenius on $\mf M$ and $\mf M'$.

In the formula (\ref{eqn:nuGaloisorbitsintro}) for $\nu$, observe that there is an integral over the Galois orbit, and there is a sum over different places $v \mid p$. If there are infinitely many $M'$ in the isogeny class of $M$ with $h(M')< c$, one of the following two cases must occur: infinitude of $p$-power isogenies for a fixed prime, and infinitude of $p$-torsion isogenies for infinitely many primes $p$. 

\subsubsection{Fixed prime case}
In the first case, it is always possible to produce an infinite subset $\{T'_i \to T\}_{i \in \N}$ of isogenies which are of the form $T'_i = L + p^{n_i}T$ for some lower-rank $\Zp$-submodule $L \subset T$ with torsion-free quotient, and $n_i \to \infty$. A priori, $L$ does not have to satisfy any Galois-stability condition. For simplicity we can just consider $n_i = i$ for all $i$. We prove the following.
\begin{propsimple}[\S \ref{subsec:fixedprimeheightbound}]
    Let $M$ be as in \Cref{mainthmB}, and let $L$ and $T$ be as above. Then 
    \[
        \lim_{i \to \infty} \nu(T, L + p^iT) \le \frac{w}{2},
    \]
    where $w$ is the weight of the Koshikawa motive $M$. Moreover, if equality is achieved, then $L$ is stable by the action of $\Gal(\ol{F}/F')$ for some finite extension $F'/F$. 
\end{propsimple}

If the inequality is strict, it follows that the height $h(M_i')$ increases without bound as $i \to \infty$. If the inequality is an equality, then all the $M_i'$ are motives defined over some number field $E$. We can thus apply \cite[Theorem 9.8]{koshikawa2015heights} to conclude that the $h(M_i')$ for all $i$ attain only a finite set of values. 

To prove the Proposition, we first construct $\mf M(L)$ as a finite free Breuil--Kisin--Fargues module, and show that $\mf M(L + p^iT) = \mf M(L) + p^i \mf M(T)$. For any $\sigma \in \Gal(\ol{F}/F)$, we can bound 
$$
    \lim_i \mu (\mf M(T)/\mf M(\sigma L) \bmod{p^i})
$$
in terms of the length of the Frobenius cokernel $\mf M(L)/\vphi^*\mf M(L) \otimes \BdR^+$. However, as remarked earlier, the Frobenius cokernel of $\mf M(L)$ contains the data of the location of $L \otimes_{\Zp} C$ in the Hodge--Tate decomposition on $T \otimes_{\Zp}C$.

What's more, the Mumford--Tate assumption implies $\rho(\Gal_F) \subset G_{MT}(\Zp)$ is open, so we can replace the integral over $\sigma \in \Gal_F$ in (\ref{eqn:nuGaloisorbitsintro}) with one over $g \in G_{MT}(\Zp)$. The integral will equal the ``generic'' slope value, i.e.~the one attained by a measure one (or Zariski open) set of $g \in G_{MT}(\Zp)$.

The absolute Hodge class and \'etale-de Rham compatibility assumptions imply a Hodge symmetry on the Hodge--Tate decompositions for conjugate pairs of places $v \mid p$, which allows us to bound the sum of generic slope values for conjugate pairs of $v \mid p$ in terms of the weight $w$. Because $\nu$ is a sum over all places $v \mid p$, this Hodge symmetry relation implies the Proposition.

\subsubsection{Varying prime case}
Suppose there exist $p$-power isogenies $M' \to M$ with $h(M')< c$ for infinitely many primes $p$. Unlike the fixed-prime case, the proof in the varying-prime case of \cite{kisinmocznorthcott} still applies in our situation if we assume the adelic Mumford--Tate conjecture.
\begin{propsimple}[\Cref{cor:ptorsionheightbound}]
    Assume $M$ is as in \Cref{mainthmB}. For any prime $p$, let $T_p$ denote the $\Zp$-integral structure chosen on the $p$-adic \'etale realization $M_p$ for $M$. After replacing $F$ with a finite extension, there exists a constant $c_M > 0$ such that for $p \gg 0$, and for any proper sublattice $T_p' \subset T_p$ such that the $\Gal(\ol{F}/F)$-orbit of $T_p'$ generates $T_p$, the following inequality holds. 
    \[
        \frac{w}{2} - \nu(T_p,T_p') \ge \frac{1}{2[F:\Q](\rk(T_p)-1)}\left( 1 - \frac{c_M}{p}\right).
    \]
\end{propsimple}
The result of Koshikawa \cite[Theorem 9.8]{koshikawa2015heights} implies that there must be infinitely many prime-power isogenies of the form specified in the Proposition. However, for $p \gg 0$, the Proposition shows that any such $p$-power isogeny $M' \to M$ must have $h(M') > c$, where $c$ is as in \Cref{mainthmB}. 

We prove the Proposition by reducing to the situation of $p$-torsion isogenies, i.e.~$p$-power isogenies defined by a finite-index inclusion of $\Zp$-modules $T_p' \into T_p$ such that $T_p/T_p'$ is an $\Fp$-vector space. The proof in the $p$-torsion case is due to \cite[Proposition 3.6.3]{kisinmocznorthcott}, and we briefly summarize it here. The adelic Mumford--Tate conjecture for $M$ implies that for $p \gg 0$, we can replace the integral over $\Gal_F$ in (\ref{eqn:nuGaloisorbitsintro}) with an average over the $G_{MT}(\Fp)$-orbit acting on the subspace $T'_p \otimes \Fp$ in the vector space $T_p \otimes \Fp$. Allowing $p \gg 0$ also ensures that the associated $p$-torsion BKF modules are Fontaine--Laffaille, which obey a good structure theory. This, combined with a similar Hodge-symmetry inequality used in the fixed-prime case, also gives a lower-bound for $\nu(T_p, T'_p)$ whenever $T_p \supset T_p' \supset pT_p$. 

\td{With the adelic Mumford--Tate conjecture assumption in \Cref{mainthmB}, we can upgrade the finiteness of heights result of Koshikawa \cite[Theorem 9.8]{koshikawa2015heights} to finiteness of isomorphism classes of Koshikawa motives. In particular, under the adelic Mumford--Tate assumption, the Galois group acts through a reductive group and hence acts semisimply on the $p$-adic \'etale realization $M_p$ for every $p$. One can then show with the same argument as Faltings that for every $p$, there are only finitely many isomorphism classes of $\Gal(\ol{F}/F)$-stable $\Zp$-lattices in $M_p$. Moreover, when $p \gg 0$, the Galois group surjects onto $G_{MT}(\Zp)$ and hence onto $G_{MT}(\F_p)$. This implies that there is only one isomorphism class (up to scaling by powers of $p$) of Galois-stable $\Zp$-lattice in $M_p$, because the $\F_p$-representation is semisimple. }


\subsection{Organization of the paper}
In \S \ref{section:BKF}, we review some background on Breuil--Kisin and Breuil--Kisin--Fargues modules. In \S \ref{section:heightsmotives}, we define Koshikawa motives, define the Kato--Koshikawa height, and give the formula for height change under isogeny. In \S \ref{section:MTconj}, we state the assumptions used in the statement of Theorem \ref{mainthmB}. In \S \ref{section:case1}, we prove the case of having infinitely many $p$-power isogenies of bounded height for a fixed prime $p$. In \S \ref{section:varyingprime}, we prove the case of having $p$-power isogenies of bounded height for infinitely many primes $p$. 

\subsection{Acknowledgements}
This work was supported by the National Science Foundation Graduate Research Fellowship Program under Grant No. DGE 2140743. Any conclusions or recommendations expressed in this material are those of the author and do not necessarily reflect the views of the National Science Foundation.

We thank Teruhisa Koshikawa, Alexandra Hoey, Aaron Landesman, Frank Lu, Dylan Pentland, Sasha Petrov, Kush Singhal, Ananth Shankar, Naomi Sweeting, Yunqing Tang, Salim Tayou, Rosie Shen, Ziquan Yang, and Jit Wu Yap. We are particularly grateful to Mark Kisin for suggesting this problem and for many helpful discussions.


\section{Notation}\label{section:notation}

\subsection{Number fields, local fields, and Galois groups}
\begin{itemize}
    \item Let $F \subset \ol{\Q}$ be a number field. $K$ will always denote a field extension of $\Qp$. 
    \item We denote by $\Gal_F := \Gal(\ol{F}/F)$ the absolute Galois group of $F$. 
    \item We will denote by $C := \widehat{\ol{\Qp}}$ the $p$-adic completion of the algebraic closure of $\Qp$, and $\C$ denotes the usual complex numbers. 
    \item Let $v$ denote a finite place of $F$ above $p$. We use the same notation to denote the corresponding embedding $v: F \into C$. We can make a choice of embedding $\ol{v}: \ol{\Q} \to C$ extending $v$, but when it's clear, we abuse notation to write $v: \ol{\Q}\to C$ instead.  
    \item We fix a field isomorphism $C \cong \C$. 
    \item $\A_f$ denotes the ring of finite adeles over $\Q$.
\end{itemize}

\subsection{$p$-adic Hodge theory}
\begin{itemize}
    \item The cyclotomic character has Hodge--Tate weight $-1$.
    \item $\Ainf$ denotes the ring $W(\mcO_C^\tilt)$. 
    \item  $\mu := [\veps] - 1 \in \Ainf$, where $\veps \in \mcO_C^\tilt$ is a compatible choice of $p^n$th roots of unity $(1,\zeta_p,\zeta_{p^2},\ldots)$. 
    \item $\xi$ is a generator of the kernel of $\theta: \Ainf \to \mcO_C$.
    \item We will write $\tilde{\xi} := \vphi(\xi) \in \Ainf$.
    \item Let $\mf S := W(k) [[u]]$, with the associated Eisenstein polynomial $E(u)$ with root $\pi \in \ol{\Qp}$. We write $f: \mf S \to \Ainf$ for the natural map defined by $u \mapsto [\pi^\tilt]$. 
    \item For a choice of compatible $p^n$th roots of the uniformizer $\pi \in K$, $K_\infty =\bigcup_n K (\pi^{1/p^n})$. 
\end{itemize}

\subsection{Miscellaneous}
An $R$-lattice in a $\Frac(R)$-vector space $V$ is a projective $R$-submodule $M$ of $V$ such that $M \otimes_R \Frac(R)= V$.

\section{Breuil--Kisin--Fargues modules}
\label{section:BKF}

Breuil--Kisin--Fargues modules generalize Breuil--Kisin modules from ``classical'' integral $p$-adic Hodge theory. For a complete discretely valued field $K/\Qp$, Breuil--Kisin modules classify $\Gal_K$-stable $\Zp$-lattices in crystalline Galois representations. However, it is difficult to make these modules compatible with base change under ramified extensions $K'/K$. The BK modules do agree, however, after base-changing to $\Ainf$. This motivates considering Breuil--Kisin--Fargues modules.

\subsection{Breuil--Kisin modules}

Let $K$ be a complete discretely valued field extension of $\Qp$ with perfect residue field $k$, and let $\mcO_K$ be the ring of integers. Fix a uniformizer $\pi \in \mcO_K$ whose minimal polynomial is an Eisenstein polynomial $E(u) \in W(k)[u]$.
Define $\mf S := W(k)[[u]]$. 
We define the Frobenius endomorphism $\vphi: \mf S \to \mf S$ by $u \mapsto u^p$, and the natural Frobenius on $W(k)$. 
There is a natural map $\mf S \to \mcO_K$ where $u \mapsto \pi$. 

\begin{defn}
    A \emph{Breuil--Kisin (BK) module} is a $E(u)$-torsion free, finitely generated $\mf S$-module $\mf M$ equipped with an isomorphism
    \[
        \vphi_{\mf M}: \mf M \otimes_{\mf S, \vphi} \mf S[1/E(u)] \xto{\sim} \mf M[1/E(u)].
    \]
    We will often denote $\vphi^* \mf M := \mf M \otimes_{\mf S, \vphi} \mf S$.
\end{defn}

\subsubsection{}
Denote by $\Mod^{\vphi}_{/\mf S}$ (resp.~$\Mod^{\vphi,t}_{/\mf S}$) the category of finite free (resp.~$p$-power torsion) BK modules. We say that $\mf M$ is \emph{effective} if $\vphi_{\mf M}(\vphi^*\mf M) \subset \mf M$ inside the codomain $\mf M[1/E(u)]$. 

Choose compatible $p^n$th roots $\pi^{1/p^n}$ of $\pi$, and define $K_\infty := \bigcup_n K(\pi^{1/p^n})$. $\Gal_{K_\infty} \subset \Gal_K$ is its absolute Galois group.

\begin{theorem}[\cite{kisin2006crystalline}, \cite{liu2008lattices} Lemma 3.4.5, \cite{kisin2010integral} Theorem 1.2.1(2)]
    \label{thm:BKmodequivalence}
    Let $V$ be a semistable representation with Hodge--Tate weights in $[-w,0]$. For any $\Gal_{K_\infty}$-stable $\Zp$-lattice $T \subset V$, there is an effective module $\mf M \in \Mod^{\vphi}_{/\mf S}$ such that $T_{\mf S}(\mf M) := (\mf M \otimes \Ainf)^{\vphi = 1} \subseteq V$ equals $T$. Moreover, the functor $T_{\mf S}: \Mod^{\vphi}_{/\mf S} \to \Rep_{\Zp}(\Gal_{K_\infty})$ is fully faithful. There is also a canonical $\vphi, \Gal_{K_\infty}$-equivariant identification $\mf M \otimes_{\mf S} W(C^\tilt) \cong T \otimes_{\Zp} W(C^\tilt) $. 
\end{theorem}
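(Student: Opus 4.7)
The plan is to follow Kisin's original construction in the crystalline case, combined with Liu's extension to semistable representations via the monodromy operator. The conceptual point is that although the Colmez--Fontaine classification of semistable representations lives over $K_0 = W(k)[1/p]$ in terms of filtered $(\vphi, N)$-modules, extending scalars to the ring $\mathcal{O} := \mf S[1/p]$ of bounded rigid-analytic functions on the open unit disk converts the filtration into a $\vphi$-structure of finite $E$-height, and the $\Gal_{K_\infty}$-stable lattice $T$ then allows further descent to an $\mf S$-integral structure.

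My first step would be to produce, from $D_{\mathrm{st}}(V)$, a $\vphi$-module $\mathcal{M}_{\mathcal{O}}$ over $\mathcal{O}$ of $E$-height at most $w$ whose pullback along $u \mapsto \pi$ recovers the filtered $K$-module $D_K$ and carries along the monodromy operator $N$ as auxiliary data. This is Kisin's main equivalence: weakly admissible filtered $\vphi$-modules correspond to $\vphi$-modules over $\mathcal{O}$ of bounded $E$-height. Second, using the $\Gal_{K_\infty}$-equivariant $\vphi$-comparison $\mathcal{M}_{\mathcal{O}} \otimes_{\mathcal{O}} W(C^\tilt)[1/p] \cong V \otimes_{\Qp} W(C^\tilt)[1/p]$, I would define $\mf M$ as the preimage in $\mathcal{M}_{\mathcal{O}}$ of $T \otimes_{\Zp} W(C^\tilt)$. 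Verifying that this candidate is finite free over $\mf S$ of the expected rank, $\vphi$-stable, and effective, so that $T_{\mf S}(\mf M) = (\mf M \otimes \Ainf)^{\vphi=1} = T$, is the main integral-descent step, and it also yields the canonical identification $\mf M \otimes_{\mf S} W(C^\tilt) \cong T \otimes_{\Zp} W(C^\tilt)$ essentially by construction.

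For full faithfulness, faithfulness follows from the flatness of $\mf S \to W(C^\tilt)$ combined with the canonical identification above: any morphism $\mf M \to \mf M'$ in $\Mod^{\vphi}_{/\mf S}$ is determined by its image after $\otimes_{\mf S} W(C^\tilt)$, which sits inside the induced $\Gal_{K_\infty}$-equivariant map $T \otimes W(C^\tilt) \to T' \otimes W(C^\tilt)$. Fullness amounts to showing that a $\Gal_{K_\infty}$-equivariant map $T \to T'$, after tensoring up to $W(C^\tilt)$ and using the identification, restricts back to an $\mf S$-linear, $\vphi$-equivariant map $\mf M \to \mf M'$; this uses that $\mf M$ is cut out inside $\mf M \otimes W(C^\tilt)$ by a $\vphi$-stability condition combined with bounded-$E$-height control.

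The main obstacle is the integral descent in the second step: proving that the submodule defining $\mf M$ is a finite free $\mf S$-module of the expected rank rather than merely finitely generated, and that its $\vphi$-structure is effective of height at most $w$. Kisin handles this via the structure theory of $\vphi$-modules over $\mf S$ of bounded $E$-height, and Liu's additional work consists of tracking the monodromy operator $N$ through the construction so that the resulting BK module exactly reproduces $T$ as a $\Gal_{K_\infty}$-representation, rather than only up to some bounded discrepancy.
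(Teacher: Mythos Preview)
The paper does not prove this theorem: it is quoted as a black-box result from \cite{kisin2006crystalline}, \cite{liu2008lattices}, and \cite{kisin2010integral}, with no argument given. So there is no ``paper's own proof'' to compare against; your proposal is effectively a sketch of what those cited references do.

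As such a sketch, your outline is broadly accurate and captures the right architecture: pass from the filtered $(\vphi,N)$-module to a $\vphi$-module of finite $E$-height over Kisin's ring $\mathcal{O}$, then use the comparison with $W(C^\tilt)$ to carve out an $\mf S$-lattice from the $\Gal_{K_\infty}$-stable $\Zp$-lattice $T$. One small inaccuracy: the ring $\mathcal{O}$ in Kisin's paper is the ring of rigid-analytic functions on the open unit disc over $K_0$, which is strictly larger than $\mf S[1/p] = W(k)[[u]][1/p]$; the passage between $\mathcal{O}$-structures and $\mf S$-structures is itself one of the nontrivial steps. Also, your description of the integral descent (``define $\mf M$ as the preimage in $\mathcal{M}_{\mathcal{O}}$ of $T \otimes_{\Zp} W(C^\tilt)$'') is morally right but elides the actual mechanism: Kisin works through the theory of \'etale $\vphi$-modules over $\mathcal{O}_{\mathcal{E}}$ and shows that $\mf S$-lattices of finite $E$-height in $\mathcal{M}_{\mathcal{O}}$ correspond to $\Zp$-lattices in $V|_{\Gal_{K_\infty}}$, rather than literally intersecting inside $W(C^\tilt)$. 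These are refinements rather than gaps, and since the paper only cites the result, no more is required of you here.
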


\subsection{Breuil--Kisin--Fargues modules and Fargues' equivalence}

\begin{definition}[\cite{BMS2018integral}, Definition 4.22]
    A \emph{Breuil--Kisin--Fargues (BKF) module} is a $\tilde{\xi}$-torsion free, finitely presented $\Ainf$-module $\mf M$ with an isomorphism 
    \[
        \vphi_{\mf M}: \mf M \otimes_{\Ainf, \varphi} \Ainf[1/\tilde{\xi}] \xto{\sim} \mf M[1/\tilde{\xi}]
    \]
    such that $\mf M[1/p]$ is a finite projective (equivalently, free) $\Ainf[1/p]$-module. 
\end{definition}

\subsubsection{}
Denote by $\Mod^{\vphi, fp}_{/\Ainf}$ (resp.~$\Mod^{\vphi }_{/\Ainf}$, resp.~$\Mod^{\vphi, t}_{/\Ainf}$) the category of (finitely presented) BKF modules (resp.~finite free BKF modules, resp.~finite $p$-power torsion BKF modules). We say that $\mf M$ is \emph{effective} if $\vphi_{\mf M}(\vphi^* \mf M) \subset \mf M$ inside the codomain $\mf M[1/\tilde{\xi}]$. 

We denote by $\Mod^{\vphi,\le h}_{/\Ainf}$ (resp.~$\Mod^{\vphi, t, \le h}_{/\Ainf}$) the subcategory of $\Mod^{\vphi }_{/\Ainf}$ (resp.~$\Mod^{\vphi, t}_{/\Ainf}$) consisting of effective objects $\mf M$ such that $\mf M / \vphi^* \mf M$ is $\tilde{\xi}^{h}$-torsion.


\begin{theorem}[\cite{scholzeweinstein2020berkeley}, Theorem 14.1.1, originally due to Fargues]
    \label{thm:farguesequivalence}
    The following categories are equivalent:
    \begin{enumerate}[(1)]
        \item Pairs $(T,\Xi)$, where $T$ is a finite free $\Zp$-module and $\Xi \subset T \otimes_{\Zp} \BdR$ is a $\BdR^+$-lattice,
        \item Finite free Breuil-Kisin-Fargues modules $\mf M$.
    \end{enumerate}
\end{theorem}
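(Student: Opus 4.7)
The plan is to build functors in both directions and verify they are mutually quasi-inverse.

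In the forward direction, given a finite free BKF module $\mf M$, set
\[
    T(\mf M) := (\mf M \otimes_{\Ainf} W(C^\tilt))^{\vphi = 1}, \qquad \Xi(\mf M) := \vphi^*(\mf M) \otimes_{\Ainf} \BdR^+.
\]
The base change $\mf M \otimes_{\Ainf} W(C^\tilt)$ is a finite free $\vphi$-module on which Frobenius becomes a genuine isomorphism, so Fontaine's equivalence between finite free \'etale $\vphi$-modules over $W(C^\tilt)$ and finite free $\Zp$-modules ensures that $T(\mf M)$ is finite free of rank $\rk_{\Ainf}(\mf M)$, with a canonical $\vphi$-equivariant identification $\mf M \otimes_{\Ainf} W(C^\tilt) \simeq T(\mf M) \otimes_{\Zp} W(C^\tilt)$. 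Base-changing this to $\BdR$ and using the Frobenius isomorphism $\vphi^*(\mf M)[1/\tilde{\xi}] \xto{\sim} \mf M[1/\tilde{\xi}]$ exhibits $\Xi(\mf M)$ as a $\BdR^+$-lattice inside $T(\mf M) \otimes_{\Zp} \BdR$.

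In the reverse direction, I would construct $\mf M$ from $(T, \Xi)$ by a Beauville--Laszlo-style gluing along the principal divisor $V(\xi) \subset \mathrm{Spec}\,\Ainf$: the generic datum is $T \otimes_{\Zp} W(C^\tilt)$ with its canonical Frobenius from $T$, while the formal datum at $V(\xi)$ is $\Xi$, compatible on the overlap because $\Xi$ is by hypothesis a $\BdR^+$-lattice in $T \otimes_{\Zp} \BdR$. The inherited $\vphi$ on the resulting module fails to be an isomorphism along $V(\tilde{\xi})$ in a way controlled precisely by the relative position of $\vphi^*(\Xi)$ versus $\Xi$, so that $\mf M$ satisfies the BKF Frobenius-isomorphism condition after inverting $\tilde{\xi}$.

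The main obstacle is executing the inverse construction rigorously over the non-Noetherian ring $\Ainf$. Classical Beauville--Laszlo assumes Noetherian hypotheses, so one must appeal to a variant for finite projective modules over $\Ainf$ along the principal ideal $(\xi)$, in the framework used in \cite{BMS2018integral}. Verifying that the glued module is finite free (not merely finitely presented) requires inspecting both the $p$-adic and $\xi$-adic completions: the $\xi$-completion recovers $\Xi$ by construction, while the $p$-completion is controlled via classical Dieudonn\'e-type theory over $\mcO_C^\tilt$. Once both functors are in hand, checking that they are mutually quasi-inverse reduces, after faithfully flat base change to $W(C^\tilt)$ and to $\BdR^+$, to the tautological equivalences on each piece.
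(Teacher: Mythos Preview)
The paper does not supply its own proof of this theorem; it is quoted from \cite{scholzeweinstein2020berkeley}, and Remark~\ref{rmk:XiMrelation} explicitly records that in the cited proof the quasi-inverse ``goes through auxiliary categories, in particular the category of modifications of vector bundles on the Fargues--Fontaine curve.'' So your proposal is being compared against that description, and the reverse direction you sketch is genuinely different---and, as written, has a gap.

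Two points. First, a convention mismatch: the paper (following \cite{BMS2018integral}) takes $\Xi = \mf M \otimes_{\Ainf} \BdR^+$, not $\vphi^*\mf M \otimes_{\Ainf}\BdR^+$; see Remark~\ref{rmk:XiMrelation} and the discussion in \S\ref{subsubsec:weirdBdRstuff}. This is minor but does affect the bookkeeping.

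Second, and more seriously, your Beauville--Laszlo step does not typecheck. Gluing along $V(\xi)\subset\Spec\Ainf$ requires a module over $\Ainf[1/\xi]$ together with a module over the completion $\BdR^+$, compatible over $\BdR$. But $T\otimes_{\Zp} W(C^\tilt)$ is not an $\Ainf[1/\xi]$-module: $W(C^\tilt)$ is neither $\Ainf[1/\xi]$ nor any single-element localization of $\Ainf$. What one actually knows (Lemma~\ref{lemma:BMS4.26}) is the identification $\mf M\otimes_{\Ainf}\Ainf[1/\mu]=T\otimes_{\Zp}\Ainf[1/\mu]$, and in $\Spec\Ainf[1/p]$ the locus $V(\mu)$ is the \emph{infinite} union $\bigcup_{i\ge 0}V(\vphi^{-i}(\xi))$. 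Thus, starting from $T$, one must modify at all of these divisors simultaneously, with only the single lattice $\Xi$ at $V(\xi)$ given; the Frobenius is what transports $\Xi$ to the remaining $V(\vphi^{-i}(\xi))$, and one must then argue that the result extends to a finite free $\Ainf$-module. Packaging this propagation-and-extension argument rigorously is precisely what the Fargues--Fontaine curve (equivalently, shtuka) formalism in \cite{scholzeweinstein2020berkeley} accomplishes. Your sketch neither identifies this issue nor supplies a mechanism to resolve it, so the reverse construction as stated does not go through.
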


\begin{remark}\label{rmk:XiMrelation}
    By the proof of the above theorem in \cite{scholzeweinstein2020berkeley}, $\Xi = \mf M \otimes_{\Ainf} \BdR^+$, and $T= (\mf M \otimes_{\Ainf} W(C^\tilt))^{\vphi_{\mf M} =1}$. However, the quasiinverse functor from $(T,\Xi)$ to the associated $\mf M$ is less explicit because it goes through auxiliary categories, in particular the category of modifications of vector bundles on the Fargues--Fontaine curve. However, the following lemma of \cite{BMS2018integral} gives some further understanding of the relation between $\mf M$ and $(T,\Xi)$.
\end{remark}

\begin{lemma}[\cite{BMS2018integral}, Lemma 4.26]\label{lemma:BMS4.26}
    Let $\mf M \in \Mod^{\vphi}_{/\Ainf}$. Let $T= (\mf M \otimes_{\Ainf} W(C^\tilt))^{\vphi_{\mf M} =1}$. There is a Frobenius-equivariant identification $T \otimes_{\Zp} W(C^\tilt) \cong \mf M \otimes_{\Ainf} W(C^\tilt)$. Moreover, under this isomorphism, one can identify the submodules $\mf M \otimes_{\Ainf}\Ainf[1/\mu] = T \otimes_{\Zp}\Ainf[1/\mu]$.
\end{lemma}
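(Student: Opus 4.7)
The plan is to prove both claims by first base-changing to $W(C^\tilt)$---where $\tilde\xi$ becomes a unit---applying a classical \'etale $\vphi$-module equivalence, and then descending the resulting identification to $\Ainf[1/\mu]$.

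For the first assertion, I would begin by verifying that $\tilde\xi$ is a unit in $W(C^\tilt)$; this is immediate since $W(C^\tilt)$ is $p$-adically complete with residue field $C^\tilt$, and $\tilde\xi = \vphi(\xi)$ has nonzero image in $C^\tilt = W(C^\tilt)/p$ (as $\xi$ does not lie in $p\Ainf$, and $\vphi$ is an automorphism of $\mcO_C^\tilt = \Ainf/p$). Hence $\mf M \otimes_{\Ainf} W(C^\tilt)$ is a finite free $\vphi$-module over $W(C^\tilt)$ with $\vphi$ an isomorphism. I would then invoke the classical equivalence between finite free \'etale $\vphi$-modules over $W(C^\tilt)$ and finite free $\Zp$-modules, realized by $N \mapsto N^{\vphi=1}$ with quasi-inverse $T \mapsto T \otimes_{\Zp} W(C^\tilt)$; this equivalence holds because $C^\tilt$ is perfect and algebraically closed, so $W(C^\tilt)^{\vphi=1} = \Zp$ and the Artin--Schreier map $\vphi - 1$ is surjective on $W(C^\tilt)$. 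Applied to $N = \mf M \otimes_{\Ainf} W(C^\tilt)$, this yields the Frobenius-equivariant identification $T \otimes_{\Zp} W(C^\tilt) \cong \mf M \otimes_{\Ainf} W(C^\tilt)$.

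For the identification over $\Ainf[1/\mu]$, I would observe that $\mf M \otimes_{\Ainf}\Ainf[1/\mu]$ and $T \otimes_{\Zp}\Ainf[1/\mu]$ are both finitely generated projective $\Ainf[1/\mu]$-modules of the same rank $r = \rk_{\Ainf} \mf M$ sitting inside $\mf M \otimes_{\Ainf} W(C^\tilt)$, and they agree after further base change to $W(C^\tilt)$ by the previous paragraph. It therefore suffices to establish the containment $T \subseteq \mf M \otimes_{\Ainf}\Ainf[1/\mu]$ inside $\mf M \otimes_{\Ainf} W(C^\tilt)$: the resulting inclusion $T \otimes_{\Zp} \Ainf[1/\mu] \hookrightarrow \mf M \otimes_{\Ainf} \Ainf[1/\mu]$ is then an injection between projective modules of the same rank which becomes an isomorphism after the faithfully flat extension $\Ainf[1/\mu] \to W(C^\tilt)$, and hence must already be an isomorphism.

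To prove this containment, I would pick an $\Ainf$-basis $\{m_i\}$ of $\mf M$, write $t = \sum_i a_i m_i \in T$ with $a_i \in W(C^\tilt)$, and exploit the relation $\vphi(t) = t$ together with the bound $\vphi(\mf M) \subseteq \tilde\xi^{-h}\mf M$ for some $h \geq 0$ and the identity $\vphi(\mu) = \mu \tilde\xi$ to run a $p$-adic successive approximation forcing each $a_i$ to lie in $\Ainf[1/\mu]$. The principal obstacle is precisely this descent step: $\Ainf[1/\mu]$ is not Frobenius-stable in a naive sense---since $\tilde\xi$ need not be a unit in $\Ainf[1/\mu]$---so the \'etale $\vphi$-module equivalence used over $W(C^\tilt)$ does not apply directly. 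The descent must instead be extracted from the specific BKF structure of $\mf M$, tracking how the denominators produced by inverting $\vphi$ can always be rewritten in terms of powers of $\mu$ alone.
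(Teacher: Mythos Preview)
The paper does not give its own proof of this lemma; it is stated with a direct citation to \cite{BMS2018integral}, Lemma 4.26. So there is no in-paper argument to compare against, and I will assess your proposal on its own merits.

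Your treatment of the first assertion is correct and is exactly the standard argument: $\tilde\xi$ is a unit in $W(C^\tilt)$, so $\mf M \otimes_{\Ainf} W(C^\tilt)$ is an \'etale $\vphi$-module, and the equivalence with finite free $\Zp$-modules (using that $C^\tilt$ is algebraically closed so that Artin--Schreier is surjective) gives the identification.

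For the second assertion, however, your proposed strategy has a genuine gap. You plan to show only the one containment $T \subseteq \mf M \otimes_{\Ainf}\Ainf[1/\mu]$ and then deduce equality from the fact that the inclusion becomes an isomorphism after the extension $\Ainf[1/\mu] \to W(C^\tilt)$, which you call faithfully flat. But this map is not obviously faithfully flat: $\Ainf[1/\mu]$ is non-Noetherian, $W(C^\tilt)$ is its $p$-adic completion (one has $\Ainf[1/\mu]/p^n \cong W_n(C^\tilt)$), and faithful flatness of completion fails in general outside the Noetherian world. Without it, knowing that the cokernel of $T\otimes\Ainf[1/\mu]\hookrightarrow \mf M\otimes\Ainf[1/\mu]$ dies over $W(C^\tilt)$ only tells you it is $p$-divisible, not that it vanishes. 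Moreover, your suggested ``$p$-adic successive approximation'' for the containment $T \subseteq \mf M[1/\mu]$ is problematic for the same reason: $\Ainf[1/\mu]$ is not $p$-adically complete, so a $p$-adic limit of elements of $\Ainf[1/\mu]$ need not lie in $\Ainf[1/\mu]$.

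The repair is to prove both containments directly via Frobenius iteration, exploiting the identity $\tilde\xi = \vphi(\mu)/\mu$. For $T \subseteq \mf M[1/\mu]$: if $t \in T$ then $t = \vphi^n(t)$ lies in $\vphi^n(\mf M \otimes W(C^\tilt)^{\vphi^{-n}})$, and iterating the BKF bound $\vphi^{-1}(\mf M) \subseteq \xi^{-h}\mf M$ one finds $t \in (\vphi^{-n+1}(\mu)/\mu)^{-h}\mf M$ for all $n$; combined with $t \in \mf M \otimes W(C^\tilt)$ and an elementary intersection, this places $t$ in $\mu^{-h}\mf M$. The reverse containment $\mf M \subseteq T \otimes \Ainf[1/\mu]$ is obtained by a symmetric argument applied to $\vphi^{-1}$ acting on $T \otimes \Ainf$. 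This is the approach taken in \cite{BMS2018integral}; your outline correctly locates the difficulty but the mechanism you propose to resolve it would not work as stated.
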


\begin{remark}[Tate twist and relation to Fargues' classification of free BKF modules]
    \cite[Example 4.24]{BMS2018integral} shows that $\Ainf\{1\}:= \frac{1}{\mu}(\Ainf \otimes_{\Zp}\Zp(1))$ is spanned by $e$ such that $\vphi(e)=e/\vphi(\xi)$. Going backwards, one can recover $\Zp(1) = (\Ainf\{1\} \otimes W(C^\tilt))^{\vphi=1}$ is $\Zp(e \otimes \mu) = \Zp(1)$. We can also compute $\Xi = \Ainf\{1\} \otimes_{\Ainf} \BdR^+$, so the pair $(T,\Xi)$ corresponding to $\Ainf\{1\}$ is $(\Zp(1), \frac{1}{\xi}(\Zp(1) \otimes \BdR^+))$. 
\end{remark}

\subsubsection{Base-changing from BK modules to BKF modules}\label{subsubsec:basechangefromBKtoBKF}
We will use the convention of \cite[\S 4]{BMS2018integral}, where the Frobenius on Breuil--Kisin modules has cokernel killed by a power of $E(u)$, and the Frobenius on Breuil--Kisin--Fargues modules has cokernel killed by a power of $\vphi(\xi)$. Let $\mf M_{BK}$ denote a BK module. As is the convention in \cite[Proposition 4.32]{BMS2018integral}, we can define a BKF module
$$\mf M_{BKF}:= \mf M_{BK} \otimes_{\mf S, \vphi \circ f} \Ainf,$$ 
where $f: \mf S \to \Ainf$ is given by $u \mapsto [\pi^\tilt]$, where $\pi$ is a root of $E(u)$, so that $\vphi\circ f(u) = [\pi^\tilt]^p$.

\subsection{Slope of torsion BKF modules}
\subsubsection{}\label{subsubsec:normalizedlength}
For a finitely presented $\mcO_C^\tilt$-module $N$, we may write $N \cong \bigoplus_i \mcO_C^\tilt/a_i\mcO_C^\tilt$ \cite[Proposition 2.10]{scholze2013padic}. We define the normalized length of $N$ to be
$$\lambda(N) := \sum_i v(a_i),$$ 
where the valuation $v$ on $\mcO_C^\tilt$ is normalized so that $v(\ul{p}) = 1$. This is not the same as the standard notion of length in commutative algebra. 
Let $M$ be a finitely presented $\Ainf$-module which is killed by a power of $(p,\tilde{\xi})$. Then we can write $M$ as a finite number of extensions of finitely presented $\mcO_C^\tilt$-modules, and we can extend the definition of $\lambda$ on $\mcO_C^\tilt$-modules to $M$ by additivity \cite[2.2.13]{cornut2019harder}.

\begin{definition}[\cite{kisinmocznorthcott}, 3.1.4] \label{def:slope}
    Let $\mf M \in \Mod^{\vphi, t}_{/\Ainf}$ be an effective object. Then $\mf M/ \vphi^*(\mf M)$ is a finitely presented $\Ainf$-module which is killed by a power of $(p,\tilde{\xi})$. We define
    \begin{align}
        \deg \mf M &:= \lambda(\mf M/ \vphi^* \mf M) \\
        \rk \mf M &:= \ell_{W(C^\tilt)}(\mf M \otimes_{\Ainf} W(C^\tilt)),
    \end{align}
    where $\ell_{W(C^\tilt)}(\cdot)$ denotes the usual notion of length of a $W(C^\tilt)$-module from commutative algebra. Then, define the slope of $\mf M$ as 
    \begin{align}
        \mu(\mf M) := \frac{\deg \mf M}{\rk \mf M}. 
    \end{align}
\end{definition}

This definition of slope induces a Harder--Narasimhan theory on effective torsion BKF modules $\Mod^{\vphi,t}_{/\Ainf}$ \cite[\S 2.4]{cornut2019harder}. For an effective torsion BK module $\mf M \in \Mod^{\vphi,t}_{/\mf S}$, \cite{LWE2020harder} define the degree to be 
\begin{align}\label{eqn:LWEdeg}
\deg \mf M := \frac{1}{[K:\Qp]} \ell_{\Zp}(\mf M/\vphi^* \mf M),
\end{align}
where $\ell_{\Zp}$ denotes usual length as a finite $\Zp$-module. One can check that this agrees with $\deg(\mf M \otimes_{\mf S, \vphi} \Ainf)$. Both \cite{cornut2019harder} and \cite{LWE2020harder} define rank in the same way, so the two slope definitions agree.

Here, we list some properties of degree and slope which follow from the definitions and are proved in \cite[Lemma 3.1.5]{kisinmocznorthcott}. 
\begin{lemma} \label{lemma:slopeproperties}
    \begin{enumerate}[(1)]
        \item $\deg(-)$ and $\rk(-)$ are both additive in short exact sequences in $\Mod^{\vphi,t}_{/\Ainf}$. This implies that for short exact sequences 
        \[
            0 \to \mf M' \to \mf M \to \mf M'' \to 0,
        \]
        we have the inequalities 
        \begin{align*}
            \min(\mu(\mf M'), \mu(\mf M'')) \le \mu (\mf M) \le \max(\mu(\mf M'), \mu(\mf M'')).
        \end{align*}
        \item If $h: \mf M_1 \to \mf M_2$ is a map in $\Mod^{\vphi,t}_{/\Ainf}$ which is an isomorphism after inverting $\tilde{\xi}$, then $\mu(\mf M_1) \ge \mu(\mf M_2)$, and equality holds iff $h$ is an isomorphism.
        \item For $\mf M \in \Mod^{\vphi,t}_{/\mcO_C^\tilt}$, $\deg(\mf M) = \deg(\wedge^{r}_{\mcO_C^\tilt} \mf M)$, where $r = \rk_{\mcO_C^\tilt} \mf M$. 
    \end{enumerate}
\end{lemma}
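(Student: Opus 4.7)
For part (1), the slope inequality reduces to the mediant inequality for positive weights once additivity of $\deg$ and $\rk$ on short exact sequences is established. Rank additivity follows from exactness of $-\otimes_\Ainf W(C^\tilt)$, which holds because $\Ainf \to W(C^\tilt)$ is a localization (inverting the Teichm\"uller lift of a pseudouniformizer of $\mcO_C^\tilt$), hence flat, combined with additivity of length over $W(C^\tilt)$. For degree additivity, I will form the commutative ladder consisting of the given short exact sequence and its Frobenius pullback, with vertical arrows given by Frobenius. Since $\mcO_C^\tilt$ is perfect, $\vphi: \Ainf \to \Ainf$ is an automorphism and $\vphi^*$ is exact, so the top row of the ladder remains short exact. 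Effectiveness plus $\tilde\xi$-torsion freeness of BKF modules forces the vertical Frobenius maps to be injective, and the snake lemma produces a short exact sequence of Frobenius cokernels $0 \to \mf M'/\vphi^*\mf M' \to \mf M/\vphi^*\mf M \to \mf M''/\vphi^*\mf M'' \to 0$. Additivity of $\lambda$ on $(p,\tilde\xi)$-power torsion $\Ainf$-modules then yields additivity of $\deg$.

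For part (2), observe that the kernel and cokernel of $h$ are $\tilde\xi$-power torsion, since $h[1/\tilde\xi]$ is an isomorphism and $\Ainf[1/\tilde\xi]$ is flat over $\Ainf$. Because $\mf M_1$ is $\tilde\xi$-torsion free, $\ker h = 0$, and the cokernel $N := \operatorname{coker}(h)$ is moreover $p^n$-torsion (inherited from $\mf M_2$), hence $(p,\tilde\xi)$-power torsion. Using that $\xi \equiv [\varpi] \pmod p$ for a pseudouniformizer $\varpi$ of $\mcO_C^\tilt$ becomes a unit in $W(C^\tilt) = W(\Frac \mcO_C^\tilt)$, so does $\tilde\xi$, whence $\rk N = 0$ and part (1) gives $\rk\mf M_1 = \rk\mf M_2$. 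For the degree comparison, I apply the snake lemma to $0 \to \mf M_1 \to \mf M_2 \to N \to 0$ and its Frobenius pullback. Since the induced Frobenius on $N$ need not be injective, the snake lemma gives the four-term exact sequence
\[
    0 \to \ker\vphi_N \to \mf M_1/\vphi^*\mf M_1 \to \mf M_2/\vphi^*\mf M_2 \to N/\vphi^*N \to 0.
\]
The key input is the identity $\lambda(\vphi^*\mf N) = p\cdot\lambda(\mf N)$ for $(p,\tilde\xi)$-power torsion $\mf N$, which reduces by d\'evissage to the cyclic case $\mcO_C^\tilt/a$, where it is immediate from $\vphi(a) = a^p$. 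Combining this identity with the alternating length relation from the four-term sequence yields $\deg\mf M_1 - \deg\mf M_2 = (p-1)\lambda(N) \ge 0$, with equality iff $N = 0$.

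For part (3), the plan is to use the structure theorem for finitely presented modules over the valuation ring $\mcO_C^\tilt$ (noting $C^\flat$ is algebraically closed) to choose bases putting $\vphi_{\mf M}$ in Smith normal form with diagonal entries $c_1, \ldots, c_r$ corresponding to the free $(\mcO_C^\tilt)^r$ part of $\mf M$. Then $\wedge^r\mf M$ has $\wedge^r\vphi_{\mf M}$ equal to multiplication by $\prod_i c_i$, and a direct normalized-length computation gives $\deg\wedge^r\mf M = v(\prod_i c_i) = \sum_i v(c_i) = \deg\mf M$. The main technical subtlety I anticipate is the identity $\lambda(\vphi^*\mf N) = p\lambda(\mf N)$ used in part (2): while the cyclic case is immediate, extending to general mixed $(p,\tilde\xi)$-torsion $\Ainf$-modules requires careful d\'evissage along the $p$-adic filtration $p^k\mf N/p^{k+1}\mf N$ (each an $\mcO_C^\tilt$-module), using exactness of $\vphi^*$ and additivity of $\lambda$ as the underlying tools.
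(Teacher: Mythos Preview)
The paper does not supply its own proof of this lemma; it simply cites \cite[Lemma 3.1.5]{kisinmocznorthcott}. Your arguments are the natural ones and are essentially correct.

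One point in part (1) deserves more care. You write that ``effectiveness plus $\tilde\xi$-torsion freeness of BKF modules forces the vertical Frobenius maps to be injective.'' What is actually immediate is that $\ker\vphi_{\mf M}$ is the $\tilde\xi$-torsion submodule of $\vphi^*\mf M$; but because the $\Ainf$-action on $\vphi^*\mf M$ is twisted through $\vphi$, this coincides with the $\xi$-torsion of $\mf M$, not its $\tilde\xi$-torsion, and the BKF axioms only guarantee that the latter vanishes. The injectivity you want is still true---for $p$-torsion modules it is immediate from $\tilde\xi \equiv \xi^p \pmod p$ in $\mcO_C^\tilt$, and for the quotients $\mf M(T)/\mf M(T')$ of free BKF modules that actually arise in the paper it is a direct check---but it is not literally the assertion you invoke, so you should supply the extra line of argument.

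Parts (2) and (3) are correct as written. In (2), your identity $\lambda(\vphi^*N) = p\,\lambda(N)$ together with the four-term snake sequence gives the clean quantitative formula $\deg\mf M_1 - \deg\mf M_2 = (p-1)\lambda(N) \ge 0$, which handles both the inequality and the equality case at once. In (3), Smith normal form over the valuation ring $\mcO_C^\tilt$ is exactly the right tool.
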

\subsubsection{}\label{subsubsec:mumaxHdefn}
Suppose $\mf M \in \Mod^{\vphi}_{/\mcO_C^\tilt}$ is effective, and write $\mf M / \vphi^* \mf M \cong \bigoplus_{i=1}^n \mcO_C^\tilt / a_i \mcO_C^\tilt $. Using the same valuation $v$ on $\mcO_C^\tilt $ as in \ref{subsubsec:normalizedlength}, Kisin--Mocz define 
\[
    \mu_{\max}(\mf M) := \max_{i} v(a_i)
\]
and
\[
    \mc H(\mf M) := \{ v(a_1),\ldots, v(a_n)\}
\]
as a multiset \cite[3.1.7]{kisinmocznorthcott}.

\subsection{Nygaard filtration and $\mf M_{\dR}$}\label{subsec:nygaard}
In order to study the change in height under an isogeny of motives, we will use a Nygaard filtration on torsion BKF modules of the form $\mf M/\mf M' \in \Mod^{\vphi, t}_{/\Ainf}$, where $\mf M, \mf M' \in \Mod^{\vphi}_{/\Ainf}$ are finite free BKF modules.

\begin{defn}
    [Nygaard filtration on finite free BKF modules] Let $\mf M \in\Mod^{\vphi, \le w}_{/\Ainf}$. We can define a filtration on $\vphi^* \mf M$ by
    \[
        \Fil_{\mc N}^r \vphi^* \mf M = \{ x \in \vphi^* \mf M \; | \; \vphi(x) \in \vphi(\xi)^r \mf M \}.
    \]
\end{defn}

\begin{rmk}
    When $\mf M \in \Mod^{\vphi, \le w}_{/\Ainf}$, i.e.~when the cokernel of $\vphi$ on $\mf M$ is $\vphi(\xi)^w$-torsion, observe that $\vphi\left( \Fil_{\mc N}^w \vphi^* \mf M \right) = \vphi(\xi)^w \mf M \subset \mf M$.
\end{rmk}

This is the base change of the Nygaard filtration defined for Breuil-Kisin modules in \cite[\S 7.1]{koshikawa2015heights} if we base change under the composed map $\mf S= W(k)[[u]] \xto{f} \Ainf \xto{\vphi} \Ainf$ sending $u \mapsto [\pi^\tilt]^p$, which is the map used to relate Breuil--Kisin to Breuil--Kisin--Fargues modules in \ref{subsubsec:basechangefromBKtoBKF}. The identification of the two Nygaard filtrations is due to the following lemma.

\begin{lemma}
	Let $E(u)$ be an Eisenstein polynomial in $W(k)[u]$, and let $f: W(k)[[u]] \to \Ainf$ be the ring map determined by $u \mapsto [\pi^\tilt]$, where $\pi$ a root of $E(u)$. Then $f(E(u))$ is a generator of the principal ideal $\ker \theta \subset \Ainf$. 
\end{lemma}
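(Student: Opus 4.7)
The plan is to verify two properties of $f(E(u))$: that it belongs to $\ker\theta$, and that its reduction modulo $p$ has the right valuation in $\mcO_C^\tilt$ to generate the reduction of $\ker\theta$.

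First I would check that $f(E(u)) \in \ker\theta$ by direct computation. Since $\theta([\pi^\tilt]) = \pi$ and $\theta$ restricts to the identity on $W(k) \subset \Ainf$, we have $\theta(f(E(u))) = E(\pi) = 0$, because $\pi$ was chosen to be a root of $E$.

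Next I would invoke the standard structure theorem for $\ker\theta$: this ideal is principal and nonzero, and an element $\xi \in \ker\theta$ is a generator precisely when it is a non-zerodivisor in $\Ainf$ and its image in $\Ainf/p = \mcO_C^\tilt$ has tilted valuation equal to $v^\flat(\ul{p}) = 1$. Non-zerodivision is automatic since $\Ainf$ is a domain (as $\mcO_C^\tilt$ is a perfect valuation domain). So the task reduces to computing $v^\flat(f(E(u)) \bmod p)$. Writing $E(u) = u^e + a_{e-1} u^{e-1} + \cdots + a_0$ with $a_0, \ldots, a_{e-1} \in p W(k)$ by the Eisenstein condition, we obtain
\[
f(E(u)) \;=\; [\pi^\tilt]^e + \sum_{i=0}^{e-1} a_i [\pi^\tilt]^i \;\equiv\; [\pi^\tilt]^e \pmod{p}.
\]
Since $\pi$ is a uniformizer of $K$ and $v$ is normalized so that $v(p) = 1$, we have $v^\flat(\ul{\pi}) = v(\pi) = 1/e$, and therefore $v^\flat([\pi^\tilt]^e \bmod p) = 1$, as required.

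I do not expect this lemma to have a genuine obstacle, as the computation is short and the structural input (principality of $\ker\theta$ with the valuation-criterion for a generator) is classical; I would simply cite it from Fontaine or \cite{BMS2018integral,scholzeweinstein2020berkeley}. The only minor point worth being careful about is fixing conventions: confirming that the isomorphism $\Ainf/p \cong \mcO_C^\tilt$ identifies $\theta \bmod p$ with the sharp map $x \mapsto x^\sharp \bmod p$, so that the valuation criterion applied here matches the one used to characterize generators of $\ker\theta$.
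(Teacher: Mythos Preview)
Your proposal is correct and follows essentially the same approach as the paper: verify that $f(E(u)) \in \ker\theta$ and then apply a numerical criterion for generating the ideal. The only difference is which criterion you invoke---the paper computes the Teichm\"uller expansion $f(E(u)) = \sum_{i\ge 0}[b_i]p^i$ and checks that $b_1$ is a unit in $\mcO_C^\tilt$ (citing \cite[Prop.~4.4.3(2)]{brinon2009cmi}), whereas you use the equivalent condition that $b_0 = (\pi^\tilt)^e$ has tilted valuation $1$; your version is slightly cleaner since the reduction mod $p$ is immediate.
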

\begin{proof}
	Since $E(u)$ is Eisenstein, it can be written in the form $E(u) = u^n + p^2(a_{n-1}u^{n-1} + a_{n-2}u^{n-2} + \cdots + a_1 u) + pv$ for some $n$, where $a_i \in W(k)$ and $v \in W(k)^\times$. If we write
\begin{align}
	f(E(u)) = \sum_{i\ge 0} [b_i]p^i,\qquad b_i \in O_C^\flat,
\end{align}
	then we see that $b_0 = (\pi^\flat)^n \in \mcO_C^\flat$ and $b_1 = v \in (\mcO_C^\flat)^\times$. In particular, we can use \cite[Prop.~4.4.3(2)]{brinon2009cmi} to conclude that $f(E(u))$ is a generator of $\ker \theta$. 
\end{proof}
    

\subsection{Filtered lattice in de Rham cohomology}
The Nygaard filtration above induces a filtered lattice which will be useful in analyzing the change in Kato--Koshikawa height under isogenies.

\begin{defn}
    For an effective BK module $\mf M \in \Mod^{\vphi}_{/\mf S}$ coming from a $\Gal_K$-stable $\Zp$-lattice $T$ in a crystalline representation, define $\mf M_{\dR} := \vphi^* \mf M \otimes_{\mf S} \mcO_K$, under the usual map $\mf S\to \mcO_K$ where $u \mapsto \pi$. The image of $\Fil^* \vphi^*\mf M$ in $\mf M_{\dR}$ will define a filtration on $\mf M_{\dR}$.
\end{defn}

\begin{remark}
    One has the containment $\mf M_{\dR}\subset D_{\dR}(T[1/p])$, and the filtration on $\mf M_{\dR}$ agrees with the filtration on $D_{\dR}(T[1/p])$ \cite{koshikawa2015heights}.
\end{remark}

\subsubsection{}
Continuing the notation from \S \ref{subsubsec:basechangefromBKtoBKF}, when $\mf M$ is a BKF module, we can also define a filtered $\mcO_C$-module $\mf M_{\dR}$ to be compatible with the definition for BK modules.
 $\Fil^i \vphi^* \mf M_{BK}$ is the pullback of the diagram $\vphi^* \mf M_{BK} \xto{\vphi}\mf M_{BK} \xleftarrow{\supseteq} E(u)^i \mf M_{BK}$. 
 Since the map $\vphi\circ f: \mf S \to \Ainf$ is flat, and because $f(E(u))$ generates $\ker\theta = \xi\Ainf$, we can tensor the pullback square by $\vphi\circ f: \mf S \to \Ainf$ to get another pullback square. 
 In other words, $(\Fil^i \vphi^* \mf M_{BK}) \otimes_{\mf S, \vphi \circ f}\Ainf = \Fil^i \vphi^* \mf M_{BKF}$, where the filtration on $\vphi^* \mf M_{BKF}$ is defined to be $\Fil^i \vphi^* \mf M_{BKF} := \vphi^{-1}(\vphi(\xi)^i \mf M_{BKF})$.

To get to the $\OC$-lattice, recall that we get the $\mcO_K$-module $(\mf M_{BK})_{\dR}$ as $(\mf M_{BK})_{\dR} = \vphi^* \mf M_{BK} \otimes_{\mf S, \theta\circ f} \mcO_K$, where we abuse notation to denote $\theta\circ f: \mf S \to \Ainf \to \mcO_C$ as $u \mapsto \pi$. In particular, 
\begin{align}
    (\mf M_{BK})_{\dR} \otimes_{\mcO_K}\OC 
    &= (\vphi^* \mf M_{BK} \otimes_{\mf S, \vphi \circ f} \Ainf) \otimes_{\Ainf, \theta \circ \vphi^{-1} } \mcO_C \\
    &= \vphi^*\mf M_{BKF} \otimes_{\Ainf, \theta \circ \vphi^{-1}} \OC \\
    &=\vphi^*\mf M_{BKF} \pmod{\vphi(\xi)}.
\end{align}
In other words, if we define
\begin{align*}
    (\mf M_{BKF})_{\dR} := \vphi^*\mf M_{BKF} \otimes_{\Ainf, \theta \circ \vphi^{-1}} \OC,
\end{align*}
then we get the compatibility $(\mf M_{BK})_{\dR}\otimes_{\mcO_K}\mcO_C = (\mf M_{BKF})_{\dR}$.

\begin{rmk}\label{rmk:quotfil}
    After the present author noticed that the definition of the Nygaard filtration on torsion BK modules in \cite[\S 7.1]{koshikawa2015heights} does not behave will in short exact sequences, T.~Koshikawa suggested to use the following quotient filtration instead. 
\end{rmk}

\begin{defn}
    When $\mf M/\mf M'$ is a torsion Breuil--Kisin--Fargues module, one can define a filtration $\Fil'$ on $\vphi^*(\mf M/\mf M')$ such that $\Fil'^i \vphi^*(\mf M / \mf M')$ is precisely the quotient filtration from $\Fil^i_{\mcN}\vphi^* \mf M$, so 
    \[
        \Fil'^i \vphi^*(\mf M/\mf M') = \frac{\Fil^i_{\mcN} (\vphi^* \mf M) + \vphi^* \mf M'}{\vphi^* \mf M'} \cong \frac{\Fil^i_{\mcN} \vphi^* \mf M}{\Fil^i_{\mcN} \vphi^* \mf M \cap \vphi^* \mf M' }.
    \]
\end{defn}

Since $\Fil^i_{\mcN} \vphi^* \mf M \cap \vphi^* \mf M' = \Fil_{\mc N}^i\vphi^* \mf M'$, we have that for all $i$, the following sequence is exact.
\begin{align} \label{eqn:NygaardSES}
    0 \to \Fil^i_{\mcN} \vphi^* \mf M' \to \Fil^i_{\mcN} \vphi^* \mf M \to \Fil'^i \vphi^* (\mf M/\mf M') \to 0
\end{align}
Moreover, the same is true for the graded pieces $\gr^i_{\mc N}$ and $\gr'^i$. 

\subsubsection{} \label{subsubsec:filtrations}
For free Breuil-Kisin-Fargues modules $\mf M$, we can define a filtration $F^\bullet$ on $\Fil^r_{\mcN} \vphi^* \mf M$ as in \cite[Lemma 7.5]{koshikawa2015heights} such that $F^0 \Fil^r_{\mcN} \vphi^* \mf M= \Fil^r_{\mcN} \vphi^* \mf M$ and $F^{r+1} \Fil^r_{\mcN} \vphi^* \mf M = \Fil^{r+1}_{\mcN} \vphi^* \mf M$, namely
\[
    F^j (\Fil^r \vphi^* \mf M) := (\vphi^* \mf M \cap \vphi(\xi)^{r+1} \mf M) + (\vphi(\xi)^j \vphi^* \mf M \cap \vphi(\xi)^r \mf M).
\]
This induces a filtration $F^\bullet$ on $\gr^r_{\mcN} \vphi^* \mf M$. 
We also get an injection 
\[
    F^j \gr^r_{\mcN} \vphi^* \mf M' \into F^j \gr^r_{\mcN} \vphi^* \mf M 
\]
which induces a quotient filtration $F'^\bullet$ on $\gr'^r\vphi^* (\mf M/\mf M')$. We can check by construction of $F^\bullet$ that we also have an injection $F'^{j+1}\gr'^r\vphi^* (\mf M/\mf M') \into F'^{j}\gr'^r\vphi^* (\mf M/\mf M')$. This means that for all $r$ and $j$, there is a short exact sequence
\begin{align}\label{eqn:sesgrjgrr}
    0 \to \gr_F^j \gr^r_{\mc N} \vphi^* \mf M' \to \gr_F^j \gr^r_{\mc N} \vphi^* \mf M \to \gr_{F'}^j \gr'^r \vphi^*( \mf M/\mf M') \to 0.
\end{align}

Also, by definition of $F^\bullet$, there is a natural isomorphism
\[
\gr_F^j \gr^r_{\mc N} \vphi^* \mf M \cong \gr^{r-j}\mf M_{\dR},\]
where $\mf M_{\dR} :=\vphi^* \mf M \otimes_{\Ainf, \tilde{\theta}} \mcO_C$, where the base change map $\tilde{\theta}:\Ainf \to \mcO_C$ has kernel $(\vphi(\xi))$ \cite[Lemma 7.5]{koshikawa2015heights}.

Because (\ref{eqn:NygaardSES}) is exact, and its third term is $\vphi(\xi)$-torsion free, we also get the following short exact sequence for all $r$.
\[
    0 \to \Fil^r \mf M'_{\dR} \to \Fil^r \mf M_{\dR} \to \Fil'^r (\mf M/\mf M')_{\dR} \to 0
\]
Moreover, one can check that the filtration on $\vphi^*(\mf M/\mf M')$ induces an honest filtration on $(\mf M/\mf M')_{\dR} := \vphi^*(\mf M/\mf M') \otimes \mcO_C$, i.e.~the maps from $\Fil'^{i+1} \to \Fil'^i$ are injective, by checking that $\Fil'^i/\Fil'^{i+1} \cong (\Fil'^i / \vphi(\xi))/ (\Fil'^{i+1} / \vphi(\xi))$. Therefore for all $r$, we get short exact sequences 
\begin{align}\label{eqn:sesgrdR}
    0 \to \gr^r \mf M'_{\dR} \to \gr^r \mf M_{\dR} \to \gr'^r (\mf M/\mf M')_{\dR} \to 0.
\end{align}
Combining (\ref{eqn:sesgrjgrr}) and (\ref{eqn:sesgrdR}), we have
\[
    \gr_{F'}^j \gr'^r \vphi^*( \mf M/\mf M') \cong \gr^{r-j} \mf M_{\dR}/ \gr^{r-j}\mf M'_{\dR} \cong \gr'^{r-j} (\mf M/\mf M')_{\dR}.
\]

\section{Heights on motives}
\label{section:heightsmotives}

\subsection{Motives as systems of realizations}
In this section, we will use the same definitions as used by \cite{koshikawa2015heights}. Roughly, a motive is the data of the $w$th Betti, de Rham, and \'etale cohomologies of a smooth projective variety $X$ over a number field $F$, all of which are compatible under various comparison isomorphisms with their various structures (e.g.~polarized Hodge structure, filtration, or Galois representation). 

Rather than directly working with the category of motives when studying heights in isogeny classes, we instead work with varying choices of integral structures on a fixed system of realizations \cite[\S 7]{deligne1989groupe} coming from a single Grothendieck motive. A system of realizations is a cohomological avatar of a motive, remembering the various cohomological realizations of a motive and the extra data and relations among them. 
\begin{defn}
    Let $F$ be a number field. A \emph{system of realizations} of weight $w$ defined over $F$ is the tuple ${\bf M} = (M_{B,\sigma}, M_{\dR}, M_{\et}, i_{\sigma,\dR}, i_{\sigma, B,\et}, \{i_{v,\et,\dR}\}_v)$, where
\begin{itemize}
    \item for each $\sigma : F \into \C$, the Betti realization $M_{B,\sigma}$, which is a polarized $\Q$-Hodge structure of weight $w$,
    \item the de Rham realization $M_{\dR}$, which is a filtered $F$-vector space,
    \item the adelic \'etale realization $M_{\et}$, which is an $\A^f_{\Q}$-module with a continuous $\Gal_F$-action,
    \item $i_{\sigma,\dR}: M_{B,\sigma} \otimes_{\Q} \C \xto{\sim} M_{\dR} \otimes_{F,\sigma} \C$,
    \item $ i_{\sigma, B,\et}: M_{B,\sigma} \otimes_{\Q}\A^f_{\Q} \xto{\sim} M_{\et} $
    \item For every embedding $v: \ol{F} \to \ol{\Q}_p$ extending a place $v \mid p$ of $F$, an isomorphism $i_{v,\et,\dR}:M_{p} \otimes_{\Qp} \BdR \xto{\sim} M_{\dR} \otimes_{F, v} \BdR$ respecting filtrations and $\Gal(\ol{F}_v/F_v)$-actions. Here, $M_{p}$ denotes the $p$-adic projection of $M_{\et}$. 
    \end{itemize}
\end{defn}

\begin{rmk}
    One might consider the $\A_f$-topology to be dependent on some choice of integral structure on $M_{\et}$. However, any choice of complex embedding $\sigma$ and any choice of $\Z$-lattice inside the $\Q$-Hodge structure $M_{B,\sigma}$ will define a $\hat{\Z}$-integral structure on $M_{\et}$ under the comparison isomorphism $i_{B,\et}$, which will allow one to define the restricted product structure on $\prod_p M_p$ to make $M_{\et}$ an $\A_f$-module. Regardless of the choice of $\Z$-lattice, the topology on $M_{\et}$ will be the same. 
\end{rmk}

\begin{defn}[\cite{koshikawa2015heights}, 4.2]\label{defn:goodsemistablered}
    Fix a finite place $v$ of $F$ above $p$, and let ${\bf M}$ be a system of realizations defined over $F$. 
    \begin{enumerate}
        \item ${\bf M}$ has \emph{good reduction} at $v$ if the $\ell$-adic \'etale realization $M_\ell$ is unramified at $v$ for all $\ell \neq p$, and the $p$-adic \'etale realization $M_p$ is a crystalline representation at $v$. 
        \item ${\bf M}$ has \emph{semistable reduction} at $v$ if the inertia $I_v$ acts unipotently on $M_\ell$ for any $\ell \neq p$, and the $p$-adic \'etale realization $M_p$ is a semistable representation at $v$. 
    \end{enumerate}
\end{defn}

\begin{remark}\label{rmk:geomrepsdeRham}
    Suppose we have smooth proper variety $X/F$.
    Faltings showed that at places $v \mid p$ of $F$, the Galois representation on $p$-adic \'etale cohomology of $X_{F_v}$ is de Rham, and when $X$ has good reduction at $v$, the representation is crystalline \cite{faltings1989crystalline}.
    Any smooth proper $X/F$ will have good reduction at almost all places of $F$. At the finitely many places of bad reduction, each of those de Rham representations are potentially semistable, as proved by Berger \cite{berger2002representations} and separately by Andr\'e, Mebkhout, and Kedlaya. Therefore we may replace $F$ with a finite extension so that the system of realizations corresponding to the $w$th cohomology of $X$ has semistable reduction everywhere as defined above. 
\end{remark}

\begin{defn}
    \label{def:Koshikawamotive}
    A \emph{Koshikawa motive} $M = ({\bf M},\hat{T})$ of weight $w$ defined over a number field $F$ is a system of realizations ${\bf M}$ over $F$ associated to a Grothendieck motive over $F$, together with the choice of a $\Gal_F$-stable $\hat{\Z}$-submodule $\hat{T} \subset M_{\et}$ which is open in the adelic topology. We will always assume that ${\bf M}$ has semistable reduction everywhere. 
\end{defn}

\begin{rmk}
    In what follows, one might ask whether the Kato--Koshikawa height could be defined for only the data of a system of realizations with a choice of integral structure, i.e.~discarding the assumption that the data arises as the image of some actual correspondence of smooth projective varieties defined over $F$. However, the construction of the Kato--Koshikawa height requires the various $\mcO_{F_v}$-lattices in $M_{\dR}\otimes_F F_v$ produced via the choice of $\hat{T}$ and integral $p$-adic Hodge theory to be coherent. That is, for almost all $v$ of $F$, we need the $\mcO_{F_v}$-lattice to be the base-change of a single $\mcO_F$-lattice in $M_{\dR}$. In \cite{koshikawa2015heights}, Koshikawa uses some results that follow from having integral models of the varieties involved in producing the motive to guarantee coherence of the $\mcO_{F_v}$-lattices. Without putting some unnatural extra conditions on the system of realizations, we do not know how to replace this use of geometry in ensuring the height is well-defined. Therefore we will require that a Koshikawa motive must have an underlying system of realizations coming from geometry. 
\end{rmk}

\td{When you come back to this, it would be good to explain the relation between actual motives and the system of realizations that I am using. Is there a fully faithful functor?}

\subsection{Construction of a lattice in the de Rham realization}
Suppose that $M = ({\bf M},\hat{T}) $ is a Koshikawa motive such that ${\bf M}=H^w(X)$ for a smooth projective variety $X$ defined over $F$, and $\hat{T}$ agrees with the natural $\Z_p$-integral structure on $M_p$ for almost all $p$.

\begin{rmk}
    In defining the height, we consider the case of ${\bf M} = H^w(X)$. If ${\bf M}$ is a system of realizations coming from a motive which is not $H^w(X)$ for some smooth projective variety $X/F$ but rather the weight $w$ piece of the projection from some cycle in $X \times X$, we can take an integral model of the cycle over $\mcO_{F,S}$ for some $S$ large enough so that we get a natural torsion-free $\mcO_{F,S}$-structure on the de Rham realization $M_{\dR}$ and its graded pieces. 
\end{rmk}

Analogously to the N\'eron model used to define the Faltings height, one wants to choose a good $\mcO_F$-integral structure on the motive's de Rham realization.
In \cite[\S 4.1]{koshikawa2015heights}, Koshikawa uses integral $p$-adic Hodge theory to produce a filtered $\mcO_{F_v}$-lattice 
$$
M_{\dR}(T_p)_{v} \subset M_{\dR}\otimes_{F} F_v
$$
from the $\Gal_F$-action on the $\Zp$-module $T_p$ restricted to $\Gal_{F_v}$ for all places $v \mid p$ of $F$ and all $p$. 
He then uses a comparison to log de Rham cohomology to show that these various filtered $\mcO_{F_v}$-lattices will combine to give a filtered $\mcO_F$-lattice in $M_{\dR}$. He provides an alternate justification in \cite[\S 5.2]{koshikawa2016hodge} using the new technique of $\Ainf$-cohomology developed in \cite{BMS2018integral}, which we will explain here.

\subsubsection{} \label{subsubsec:Sassumptions}
We have a smooth integral model $\mc X \to \Spec \mcO_{F,S}$ of $X$ defined over the $S$-integers of $F$. We can enlarge $S$ so that the following hold:
\begin{enumerate}[(1)]
\item the finitely generated $\mcO_{F,S}$-modules $H^w_{\dR}(\mc X/\mcO_{F,S})$ and $H^{w+1}_{\dR}(\mc X/\mcO_{F,S})$ are torsion-free,
\item the finitely generated $\mcO_{F,S}$-modules $H^{w-i}(\mc X , \Omega^i_{\mc X/\mcO_{F,S}})$ are torsion-free for all $i$, and
\item for all $p$ such that some $v \not\in S$ satisfies $v \mid p$, the choice of $\Zp$ lattice in $M_p$ coming from $\hat{T}$ is the natural one: $T_p = H^w_{\et}(X_{\ol{F}}, \Zp)\subset M_p$. 
\end{enumerate}

Let $\mf X$ be a proper smooth formal scheme over $\mcO_C$ with generic fiber $X_C$.
There is a perfect complex of $\Ainf$-modules $R\Gamma_{\Ainf}(\mf X)$ such that all cohomology groups are finitely presented BKF modules. This $\Ainf$-cohomology complex is a way to geometrically construct the BKF modules associated to the \'etale cohomology of $X$, and satisfies various comparison isomorphisms to de Rham, \'etale, and crystalline cohomology. 
The construction and comparison theorems are originally proved by Bhatt--Morrow--Scholze in \cite{BMS2018integral}. Since then, Bhatt and Scholze have shown that these results can be proven using the formalism of the prismatic site \cite{bhatt2022prisms}, \cite[Theorem 9.1]{guo2024prismatic}.
One may then combine the $\Ainf$-cohomology comparison theorems with Fargues' classification of finite free BKF modules to prove the following, under a torsion-free assumption. 

\begin{theorem}[\cite{BMS2018integral}, 14.5(iii)]
    Let $\mf X$ be the formal scheme as above, and fix $i \ge 0$. Assume that $H^i_{\dR}(\mf X)$ is torsion-free. Then $H^i_{\Ainf}(\mf X):= H^i(R\Gamma_{\Ainf}(\mf X))$ is a finite free BKF module. In particular, it is the BKF module associated to $H^i_{\et}(X_C, \Zp)$.
\end{theorem}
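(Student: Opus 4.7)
The plan is to combine the $\Ainf$-cohomology comparison isomorphisms of \cite{BMS2018integral} with Fargues' equivalence (\Cref{thm:farguesequivalence}) and the structural classification of finitely presented $\Ainf$-modules. Write $M := H^i_{\Ainf}(\mf X)$. By the preceding paragraph, $M$ is already known to be a finitely presented BKF module, so it is $\tilde{\xi}$-torsion free and $M[1/p]$ is finite projective over $\Ainf[1/p]$. The content of the theorem is thus (a) to upgrade ``finitely presented'' to ``finite free'', and (b) under Fargues' equivalence, to identify the resulting pair $(T,\Xi)$ with the one determined by $p$-adic \'etale and de Rham cohomology.

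For (a), I would invoke the de Rham comparison for $\Ainf$-cohomology, namely a quasi-isomorphism
\[
    R\Gamma_{\Ainf}(\mf X) \otimes^L_{\Ainf} \mcO_C \;\simeq\; R\Gamma_{\dR}(\mf X/\mcO_C),
\]
where the base change is along the canonical surjection $\Ainf \twoheadrightarrow \mcO_C$ whose kernel is principal and generated by (a Frobenius twist of) $\xi$. The fact that every $H^j_{\Ainf}(\mf X)$ is $\tilde{\xi}$-torsion free (because each is a BKF module) rules out the only potential $\Tor$ contribution to degree $i$, so
\[
    M/\tilde{\xi} M \;\cong\; H^i_{\dR}(\mf X/\mcO_C),
\]
which is finite free over $\mcO_C$ by the torsion-free hypothesis (since $\mcO_C$ is a valuation ring, finitely generated torsion-free modules are free). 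I would then appeal to the classification of finitely presented $\Ainf$-modules from \cite{BMS2018integral}: a finitely presented $\Ainf$-module that is $\tilde{\xi}$-torsion free, has projective $p$-inversion over $\Ainf[1/p]$, and whose reduction modulo $\tilde{\xi}$ is finite free over $\mcO_C$, is itself finite free over $\Ainf$. All three conditions are now in place, so $M$ is a finite free BKF module.

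For (b), I would unwind the Fargues correspondence via \Cref{rmk:XiMrelation} and \Cref{lemma:BMS4.26}. The associated $\Zp$-lattice is $T := (M \otimes_{\Ainf} W(C^\tilt))^{\vphi=1}$, and the \'etale comparison for $\Ainf$-cohomology provides a $\vphi$-equivariant isomorphism
\[
    M \otimes_{\Ainf} W(C^\tilt) \;\cong\; H^i_{\et}(X_C, \Zp) \otimes_{\Zp} W(C^\tilt),
\]
so taking $\vphi$-invariants identifies $T \cong H^i_{\et}(X_C, \Zp)$. The associated $\BdR^+$-lattice $\Xi := M \otimes_{\Ainf} \BdR^+$ is then the image of $H^i_{\dR}(\mf X/\mcO_C) \otimes_{\mcO_C} \BdR^+$ inside $T \otimes_{\Zp} \BdR$, as dictated by compatibility of the de Rham and \'etale comparisons.

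The main obstacle is the $\Tor$-vanishing in step (a): one must verify that the torsion-free hypothesis on $H^i_{\dR}(\mf X)$ alone, combined with the BKF-module property of all the $H^j_{\Ainf}(\mf X)$'s, suffices to eliminate $\Tor$ obstructions in the degree-$i$ cohomology of the derived base change. Once this is handled, the remaining inputs are entirely formal deployments of the $\Ainf$-cohomology comparison theorems and Fargues' classification.
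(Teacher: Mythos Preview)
The paper does not give its own proof of this statement; it is cited directly from \cite{BMS2018integral}, and the sentence preceding the theorem is only a one-line summary of the ingredients, not an argument. Your outline expands that hint in a reasonable way, and part (b) is exactly how the ``in particular'' clause is justified via \Cref{rmk:XiMrelation} and the \'etale comparison.

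Part (a), however, has a genuine gap. The de Rham comparison $R\Gamma_{\Ainf}(\mf X)\otimes^L_{\Ainf}\mcO_C\simeq R\Gamma_{\dR}(\mf X)$ is along $\theta:\Ainf\to\mcO_C$, whose kernel is generated by $\xi$, not $\tilde\xi$. The $\Tor$ obstruction to passing to non-derived cohomology in degree $i$ is therefore $H^{i+1}_{\Ainf}(\mf X)[\xi]$, and what you would need is $\xi$-torsion freeness of $H^{i+1}_{\Ainf}$. The BKF-module property (even in this paper's convention) only gives $\tilde\xi$-torsion freeness, and there is no direct implication from one to the other. Moreover, in \cite[Definition 4.22]{BMS2018integral} the BKF condition does \emph{not} include $\tilde\xi$-torsion freeness at all; this paper has added that hypothesis to its definition. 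So the premise ``every $H^j_{\Ainf}$ is $\tilde\xi$-torsion free because each is a BKF module'' is unjustified on two counts, and the $\Tor$ argument does not go through as written.

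The proof in \cite{BMS2018integral} takes a different route: it uses the crystalline specialization $\Ainf\to W(\bar k)$ and the structural criterion \cite[Lemma 4.9]{BMS2018integral} that a finitely presented $\Ainf$-module $M$ with $M[1/p]$ free is finite free if and only if $M\otimes_{\Ainf}W(\bar k)$ is $p$-torsion free. Combined with the analysis of how torsion behaves under the crystalline comparison, this lets one conclude freeness from the single degree-$i$ hypothesis. Your de Rham approach would work if you also assumed $H^{i+1}_{\dR}(\mf X)$ torsion-free, and indeed that is precisely the extra assumption the paper imposes for its application (condition (1) of \S\ref{subsubsec:Sassumptions}), but it is not part of the theorem as stated.
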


The $\Ainf$-de Rham comparison theorem is the following form \cite[Theorem 1.8(ii)]{BMS2018integral}:
\[
R\Gamma_{\Ainf}(\mf X) \otimes^{\mb L}_{\Ainf} \mcO_C \simeq R\Gamma_{\dR}(\mf X).
\]
With the choice of $S$ satisfying the torsion-free conditions above, for all $v \not\in S$, we can thus conclude that
$$H^w_{\Ainf}(\mf X) \otimes_{\Ainf}\mcO_C \cong H^{w}_{\dR}(\mc X/ \mcO_{F,S}) \otimes_{\mcO_{F,S}, v} \mcO_C .$$
The tensor product on both sides are non-derived. In other words, we have found a $\mcO_C$-lattice inside de Rham cohomology $M_{\dR} \otimes_{F,v} C$. Moreover, the above theorem shows that $H^w_{\Ainf}(\mf X)$ depends only on $X_C$ and not on the choice of integral model.

Let's see how this produces at least a choice of $\mcO_{F,S}$-lattice inside $M_{\dR}$ that is independent of choice of integral model of $X$. For all $v \not\in S$, we get the $\mcO_{F_v}$-module structure on $M_{\dR} \otimes_F F_v$ as
\[
    (H^w_{\Ainf}(\mf X) \otimes_{\Ainf}\mcO_C) \cap( H^w_{\dR}(X/F)\otimes F_v) = H^w_{\dR}(\mc X/ \mcO_{F,S}) \otimes_{\mcO_{F,S}} \mcO_{F_v}. 
\]
So we have found an $\mcO_{F,S}$-lattice in $M_{\dR}= H^w_{\dR}(X/F)$ such that for all $v \not\in S$, the base-change to $\mcO_{F_v}$ of the lattice is independent of the choice of integral model, and therefore so is this $\mcO_{F,S}$-lattice.

\subsubsection{}
Now that we have at least a good choice of an $\mcO_{F,S}$-lattice in $M_{\dR}$. Suppose that for each $v \in S$, we could construct an $\mcO_{F_v}$-lattice $M_{\dR}(T_p)_v$ in $M_{\dR}\otimes_F F_v$. Then we can combine these data to get a global $\mcO_F$-lattice $M_{\dR}(\hat{T}) \subset M_{\dR}$ locally of the correct rank.

With the choice of $\Gal_{F}$-stable $\Zp$-lattice $T_p \subset M_p$ in the semistable representation $M_p$, we can associate a finite free BK module $\mf M_v(T_p)$ as follows. Choose a place $v \mid p$ of $F$. Fix an extension of $v$ to $\ol{F}$, i.e.~an embedding $v: \ol{F} \to C$ extending $v: F \into C$. Choose a uniformizer $\pi$ of $F_v$ and compatible $p^n$th roots $\pi^{1/p^n}$ in $\ol{F_v}$ for all $n$. Write $K := F_v$, and let $K_\infty := \bigcup_{n} F_v(\pi^{1/p^n})$. This defines inclusions of absolute Galois groups $\Gal_{K_\infty} \subset \Gal_{K} \subset \Gal_F$. One can then use the equivalence between in \cite[Lemma 2.1.15]{kisin2006crystalline} to define $\mf M_v(T_p)$. 

One may be concerned that different choices of $\pi^{1/p^n}$ defining $K_{\infty}$ may result in different BK modules $\mf M_v(T_p)$, and thus not having a well-defined way to construct the $\mcO_F$-lattice in $M_{\dR}$ used to define the height. However, the resulting $\mcO_{F_v}$ lattice in $M_{\dR} \otimes_F F_v$ will be independent of the choice of $\pi^{1/p^n}$ \cite[Theorem 1.0.1]{liu2018compatibility}.

Given a BK module $\mf M_v(T_p)$, one has a natural map $\vphi^* \mf M_v(T_p) \otimes_{\mf S} \mcO_K \to D_{\dR}(M_p) = M_{\dR} \otimes_F F_v$ \cite[\S 2.3, \S 4.2]{liu2018compatibility}. Here, $\mf S \to \mcO_K$ is given by $u \mapsto \pi$, where $\pi$ is a root of the Eisenstein polynomial $E(u)\in \mf S$.
The image of the Nygaard filtration on $\vphi^* \mf M$ defines a filtration on $\vphi^* \mf M \otimes_{\mf S}\mcO_K$. Define the filtered $\mcO_{F_v}$-lattice
$$
M_{\dR}(T_p)_v := \vphi^* \mf M \otimes_{\mf S}\mcO_{F_v}.
$$

We can then intersect the filtered $\mcO_{F,S}$-lattice $H^w_{\dR}(\mc X/\mcO_{F,S})$ in $M_{\dR}$ and the filtered $\mcO_{F,v}$-lattices $M_{\dR}(T_p)_v$ for the finitely many $v \in S$ to get a filtered $\mcO_F$-lattice $M_{\dR}(\hat{T})$ in $M_{\dR}$.

Now we have a filtered $\mcO_F$-lattice $M_{\dR}(\hat{T})$ in $M_{\dR}$. For the $v \not\in S$, the torsion-free assumption (2) from \ref{subsubsec:Sassumptions} on the relative Hodge cohomology means that the relative Hodge-to-de Rham spectral sequence computing $H^w_{\dR}(\mc X/ \mcO_{F,S})\otimes \mcO_{F_v}$ degenerates at the $E_1$ page, because the $E_1$ differentials are 0 after tensoring with $F_v$.
\td{Ask someone about this to double check.}
Therefore, for $v \not\in S$, $(\gr^r M_{\dR}(\hat{T})) \otimes \mcO_{F_v}$ is already torsion-free. Define $\gr^r( M_{\dR}(\hat{T}))_{\fr}$ to be the unique $\mcO_F$-module such that 
\[
\gr^r( M_{\dR}(\hat{T}))_{\fr} \otimes \mcO_{F_v} = \begin{cases}
    \gr^r M_{\dR}(\hat{T}) \otimes \mcO_{F_v} & v \not\in S \\
    (\gr^r M_{\dR}(\hat{T}) \otimes \mcO_{F_v})/\text{torsion}& v \in S.
\end{cases}
\]
This defines an $\mcO_F$-lattice $\gr^r( M_{\dR}(\hat{T}))_{\fr}$ in $\gr^r(M_{\dR})$. 

\begin{remark}
    In fact, the process outlined in \S 2.6 of \cite{koshikawa2015heights} to produce $M_{\dR}(T)_v$ would also work for the places $v \not\in S$ [\textit{ibid.}, Theorem 3.5(3)], but a priori it would not be clear that the intersection of all these lattices with $M_{\dR}$ would produce an $\mcO_F$-submodule of the correct rank. Some further argument is needed, either using $\Ainf$-cohomology as above or log de Rham cohomology as is used in \cite{koshikawa2015heights}, to show that these $\mcO_{F_v}$-lattices are compatible. 
\end{remark}

\td{Have to explain why the $r$th filtered pieces also fit together to get $O_F$-modules, and the graded pieces also. Also address the $N_{\fr}$ notation for a module $N$. Actually, seems like we define it in 4.3.1.}

\nts{Stacks project 0FM0 shows that relative algebraic de Rham cohomology for proper smooth morphisms is well-behaved under any base change, so that we can get the de Rham cohomology over $\mcO_{F,S'}$ just by localizing from $\mcO_{F,S}$. Also, flat base change allows us to get the new sheaf cohomology just by localizing. Also, since we are in a proper setting over a Noetherian base, both cohomology groups are finitely generated.}


\subsection{Kato--Koshikawa height of a Koshikawa motive}

Using the $\mcO_F$-lattices $\gr^r(M_{\dR}(\hat{T}))_{\fr}$ for all $r$ from above, we can now define an
$\mcO_F$-integral version of the Griffiths line bundle, which Koshikawa then uses to define the height in a way analogous to the Faltings height.
\begin{defn}[\cite{koshikawa2015heights}, \S 4.2]
    Define the locally free rank 1 $\mcO_F$-module 
    \[
    \mc L ({ M}) := \bigotimes_{r\in \Z}(\det (\gr^rM_{\dR}(\hat{T}))_{\fr})^{\otimes r}.
    \]
    The tensor products and determinants are all taken over $\mcO_F$. 
\end{defn}

We actually do need to keep track of the torsion of the $\gr^r(M_{\dR}(\hat{T}))$ in the definition of the height, to ensure that the height change is well-behaved under isogenies. 
\begin{defn}
    Let $\Sigma_F^{\text{fin}}$ denote all finite places of $F$. For a finitely generated $\mcO_{F_v}$-module $M$, let $M_{\tor}$ denote the torsion submodule of $M$. Define 
    \[
        \# \mc L({ M})^{\tor} := \prod_{v \in \Sigma_F^{\text{fin}}} \prod_{r \in \Z} \# \gr^r(M_{\dR}(\hat{T})_v)_{\tor}
    \]
    We use the superscript $\tor$ on the left-hand side so as not to suggest that we are taking the torsion submodule of the torsion-free $\mcO_F$-module $\mc L({ M})$. 
\end{defn}

\subsubsection{}\label{ssc:normforheight}
Fix an embedding $\sigma: F \into \C$, and let $\mc L({ M})_{\sigma}:= \mc L({ M}) \otimes_{\mcO_F, \sigma} \C$. Let $c \in \Aut(\C)$ denote complex conjugation. Using the Hodge symmetry on the Hodge decomposition, there is a natural isomorphism
\begin{align}
    \mc L({ M})_{\sigma} \otimes \mc L({ M})_{c \circ \sigma}
    & \cong 
    \left(\bigotimes_{r \in \Z} \det \gr^r (M_{\dR} \otimes_{F,\sigma} \C )^{\otimes r}\right) 
    \otimes
    \left(\bigotimes_{r \in \Z} \det \gr^{r} (M_{\dR} \otimes_{F,c \circ \sigma} \C )^{\otimes r}\right) \\
    &\cong 
    \left(\bigotimes_{r \in \Z} \det \gr^r (M_{\dR} \otimes_{F,\sigma} \C )^{\otimes r}\right) 
    \otimes
    \left(\bigotimes_{r \in \Z} \det \gr^{w-r} (M_{\dR} \otimes_{F, \sigma} \C )^{\otimes r}\right) \\
    &\cong \bigotimes_{r \in \Z}\left( \det \gr^r (M_{\dR} \otimes_{F,\sigma} \C)\right)^{\otimes w}\\
    &\cong \det (M_{\dR}\otimes_{F,\sigma} \C)^{\otimes w}.
\end{align}

The Betti-\'etale comparison isomorphism, together with the integral structure given by $\hat{T}$, defines a $\Z$-module inside $\det (M_{B} \otimes_{\Q}\C )^{\otimes w}$. Choosing a generator $s_{B,\sigma}$ of this free rank 1 $\Z$-module and using the Betti-de Rham comparison isomorphism $\det(M_{\dR} \otimes_{F}\C)^{\otimes w} \cong \det(M_{B}\otimes_{\Q}\C )^{\otimes w}$ then allows us to define an isomorphism 
of 1-dimensional $\C$-vector spaces
\begin{align*}
    \det(M_{\dR}(\hat{T}))^{\otimes w} \otimes_{\mcO_F}\C \cong \det(M_{\dR} \otimes_F \C)^{\otimes w} &\xto{\sim} \C\\
    zs_{B,\sigma} &\mapsto z.
\end{align*}
For a section $s \in \mc L({ M})$, the image of $s \otimes \ol{s}$ in $\det (M_{\dR}\otimes_{F,\sigma} \C)^{\otimes w}$ will be some multiple $zs_{B,\sigma}$ of the generator $s_{B,\sigma}$, for $z \in\C$. Then define 
\begin{align} \label{eqn:norm}
    \abs{s}_\sigma:= |z|^{1/2}_{\C},
\end{align}
where $|\cdot|_{\C}$ is the usual absolute value on $\C$. 

\begin{defn}
    Let ${ M}:= ({\bf M}, \hat{T})$ be a Koshikawa motive over $F$ such that ${ \bf M
    } = {H}^w(X)$ for $X$ a smooth projective variety defined over $F$. 
    Choose any $s \in \mc L({ M})$.
    The \emph{logarithmic Kato--Koshikawa height} is the sum
    \[
    h({ M}):= \frac{1}{[F:\Q]}\left( \log \#\mc L({ M})^{\tor} + \#( \mc L({ M}) / \mcO_F \cdot s ) - \sum_{\sigma: F \into \C} \log \abs{s}_\sigma  \right).
    \]
\end{defn}

\begin{remark}
    The height is independent of the choice of $s$. Indeed, any two choices $s,s'$ will satisfy $s = \alpha s'$ for $\alpha \in F^\times$, and the product formula for $\alpha$ will ensure the heights computed using $s$ and $s'$ are the same. 
\end{remark}

\begin{remark}[\cite{koshikawa2015heights}, Proposition 4.8]
    The height of a Koshikawa motive ${ M}$ defined over $F$ is independent of base change to a finite field extension of $F$.
\end{remark}

\subsection{Comparison to other heights}

\subsubsection{Arakelov height}
Consider smooth projective scheme $\mc X \to\Spec \mcO_F$. Consider the Koshikawa motive ${ M} = ({ \bf M}, \hat{T})$, where ${ \bf M} = {H}^w(\mc X_F)$ with integral structure $\hat{T}:= \prod_p H^w_{\et}(\mc X_{\ol{F}},\Zp)_{\fr}$. 
Then $\mc L({ M})$ is a metrized line bundle on $\Spec \mcO_F$ with Hermitian metric at the infinite place $\sigma: F \into \C$ given by (\ref{eqn:norm}), which is invariant under complex conjugation.
Let $\widehat{\deg}(\mc L({ M}))$ denote the normalized Arakelov degree of this metrized line bundle \cite[(2.1.15), \S 3.1.4]{BGS1994heights}.
\begin{prop}
    [\cite{koshikawa2015heights}, Proposition 4.10]
    The difference $\lvert h({ M}) - \widehat{\deg}(\mc L({ M})) \rvert$ is bounded by a constant that depends only on $\dim X$ and the Hodge numbers of ${ \bf M}$. 
\end{prop}

The Kato---Koshikawa height also recovers the Faltings height when applied to $M = { H}^1(A)$ when $A/F$ is an abelian variety.
\begin{prop}
    [\cite{koshikawa2015heights}]
    Let $h_{\Fal}(A)$ denote the stable Faltings height of the abelian variety $A$. Then $|h_{\Fal}(A) - h(M)|$ is bounded by a constant depending only on $\dim A$. 
\end{prop}


\nts{I am removing the section about heights in families, but I think I would need it to make the Shimura varieties argument.}

\subsection{Formula for height change under isogeny}


Let ${ M} = ({\bf M}, \hat{T})$ and ${ M}' = ({\bf M}, \hat{T}')$ be two pure Koshikawa motives over $F$ defined by putting two different $\hat{\Z}$-structures on the same rational Grothendieck motive ${\bf M}$. 


\begin{defn}
    The data of ${ M}$, ${ M}'$ as above together with a finite-index inclusion $f: \hat{T}' \into \hat{T}$ of $\hat{\Z}$-modules is called an \emph{isogeny}. If the inclusion $f$ is $\Gal_{F'}$-equivariant for some extension $F'/F$, then the isogeny is said to be defined over $F'$. We will sometimes abuse notation and write $f: { M}' \to { M}$ for the isogeny. 
\end{defn}

\begin{remark}
    If an isogeny $f: { M}' \to { M}$ is such that $\hat{T}/f(\hat{T}')$ is a $p$-group, then we say that $f$ is a \emph{$p$-power isogeny}. Any isogeny defined over $F'$ can be factored into a finite composition of $p_i$-power isogenies, all defined over the same $F'$. The $p_i$ are the primes which divide $\# \hat{T}/f(\hat{T}')$. 
\end{remark}

\subsubsection{} \label{subsubsec:isogenysetup}
Let $f: { M}' \to { M}$ be an isogeny of pure Koshikawa motives as above which is defined over $F$. It suffices to write down a formula for $h({ M'}) - h({ M})$ in the case that $f$ is a $p$-power isogeny for some prime $p$. 
We may twist the motive such that the Hodge--Tate weights of the \'etale realization of ${ \bf M}$ are nonpositive, which allows us to assume that any BK modules associated to lattices in $M_p$ are effective.

Let $v \mid p$ be a finite place of $F$. Fix an extension of $v$ to $\ol{F}$, i.e.~an embedding $v: \ol{F} \to C$ extending $v: F \into C$. Choose a uniformizer $\pi$ of $F_v$ and compatible $p^n$th roots $\pi^{1/p^n}$ in $\ol{F_v}$ for all $n$. Write $K := F_v$, and let $K_\infty := \bigcup_{n} F_v(\pi^{1/p^n})$. This defines inclusions of absolute Galois groups $\Gal_{K_\infty} \subset \Gal_{K} \subset \Gal_F$. 

Using \Cref{thm:BKmodequivalence}, a $\Gal_{K_\infty}$-stable $\Zp$-lattices $T'_p \subset T_p$ inside the semistable rational representation $M_p$ corresponds to an inclusion of finite free effective BK modules $\mf M_v(T'_p) \into \mf M_v(T_p)$, where $\mf M_v(T'_p) \otimes_{\Zp} \Qp = \mf M_v(T_p) \otimes_{\Zp} \Qp$.

\begin{remark}
    \cite{koshikawa2015heights} uses a contravariant functor $\mf M(-): \Rep^{free}_{\Zp}(\Gal_{K}) \to \Mod^{\vphi}_{/\mf S}$. We will use the covariant one, but the two approaches differ only by taking duals on both sides \cite[Remark 11.2.5]{brinon2009cmi}.
\end{remark}

\subsubsection{}
    The following isogeny formula is due to \cite[Proposition 8.1]{koshikawa2015heights}. The original proof uses a slightly different definition of the Nygaard filtration on the torsion BK module $\mf M_v(T_p)/\mf M_v(T_p')$. See \Cref{rmk:quotfil}. For the sake of completeness, we rewrite the proof of Koshikawa's isogeny formula using the quotient Nygaard filtration defined in \S \ref{subsec:nygaard}, but the idea of the proof is otherwise entirely the same as the original one given by Koshikawa. 

\begin{proposition}\label{prop:isogenyformulaslope}
    Use the setup of a $p$-power isogeny ${ M}' \to { M}$ as in \ref{subsubsec:isogenysetup}. Recall from \Cref{def:slope} the definition of the slope $\mu(\mf N)$ of an effective torsion BK module $\mf N \in \Mod^{\vphi, t}_{/\mf S}$. Then 
    \begin{align*}
        h({ M}') - h({{M}})
        &= \log \#(T_p/T_p')  \left(  \frac{w}{2} - \nu(T_p, T_p')\right),
    \end{align*}
    where we define
    \begin{align*}
        \nu(T_p,T_p')&:= \frac{1}{[F:\Q]} \sum_{v \mid p} [F_v:\Qp]\:\mu\left(\frac{\mf M_v(T_p)}{\mf M_v(T_p')} \right).
    \end{align*}
\end{proposition}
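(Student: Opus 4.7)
The plan is to follow Koshikawa's original argument from \cite[Proposition 8.1]{koshikawa2015heights}, substituting the quotient Nygaard filtration of Remark \ref{rmk:quotfil} for the original filtration. This substitution is permissible because the key structural results in \S \ref{subsubsec:filtrations}, in particular the short exact sequences (\ref{eqn:NygaardSES}), (\ref{eqn:sesgrjgrr}), (\ref{eqn:sesgrdR}), and the identification $\gr_F^j \gr^r_{\mc N} \vphi^*\mf M \cong \gr^{r-j}\mf M_{\dR}$, hold for the quotient filtration by construction.

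First I would reduce $h(M') - h(M)$ to contributions at places above $p$ and at archimedean places. Since the isogeny is a $p$-power isogeny, the $\ell$-adic lattices agree for $\ell \neq p$, so the $\mcO_{F_w}$-lattices $M_{\dR}(T_\ell)_w$ for $w \nmid p$ are unchanged and contribute nothing to the finite parts of the height. The archimedean contribution comes entirely from the change in Betti integral structure: the finite-index inclusion $T_B' \subset T_B$ dual to $T_p' \subset T_p$ rescales the generator of the $\Z$-submodule in $\det(M_B \otimes_{\Q} \C)^{\otimes w}$ by a factor of $(\#(T_p/T_p'))^w$, and by the pairing $\mc L(M)_\sigma \otimes \mc L(M)_{c \circ \sigma} \cong \det(M_{\dR} \otimes_{F,\sigma}\C)^{\otimes w}$ of \S \ref{ssc:normforheight}, this rescales each $|s|_\sigma$ in such a way that $-\frac{1}{[F:\Q]}\sum_\sigma \log |s|_\sigma$ jumps by exactly $\frac{w}{2}\log \#(T_p/T_p')$, accounting for the first summand.

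For the finite contribution at $v \mid p$, I would use (\ref{eqn:sesgrdR}) to express the combined change in $\log \# \mc L(M)^{\tor}$ and $\log \#(\mc L(M)/\mcO_F \cdot s)$ at $v$ as $[F_v:\Qp] \sum_r r \cdot \lambda\bigl(\gr'^r(\mf M_v(T_p)/\mf M_v(T_p'))_{\dR}\bigr)$, using the normalized length $\lambda$ from \S \ref{subsubsec:normalizedlength}. The heart of the calculation is to collapse this weighted sum to $\lambda\bigl((\mf M/\mf M')/\vphi^*(\mf M/\mf M')\bigr)$, which by (\ref{eqn:LWEdeg}) equals $[F_v:\Qp]\deg(\mf M_v(T_p)/\mf M_v(T_p'))$. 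This is accomplished by double-indexing over $(r,j)$ via the identification $\gr_F^j \gr^r_{\mc N} \vphi^*(\mf M/\mf M') \cong \gr'^{r-j}(\mf M/\mf M')_{\dR}$ together with the exact sequence (\ref{eqn:sesgrjgrr}), which lets the weighted sum telescope. Dividing by $[F:\Q]$, converting degrees to slopes, and summing over $v \mid p$ then assembles the finite contribution into $-\nu(T_p,T_p')\log\#(T_p/T_p')$.

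The main obstacle is the bookkeeping in the telescoping step. In \cite{koshikawa2015heights} this collapse is carried out for the non-quotient Nygaard filtration, where $\gr_F^j\gr^r_{\mc N}\vphi^*\mf M$ and $\gr_F^j\gr^r_{\mc N}\vphi^*\mf M'$ are compared directly; for the quotient filtration, one must instead invoke both (\ref{eqn:sesgrjgrr}) and (\ref{eqn:sesgrdR}) simultaneously to pass between the Nygaard picture on $\vphi^*(\mf M/\mf M')$ and the de Rham picture on $(\mf M/\mf M')_{\dR}$. I expect this to be a careful but ultimately routine verification: all the necessary compatibilities are already recorded in \S \ref{subsubsec:filtrations}, so the argument reduces to chasing normalized lengths through a commutative diagram of short exact sequences.
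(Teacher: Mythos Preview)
Your proposal is correct and follows essentially the same route as the paper's proof. The paper likewise separates the archimedean contribution (giving the $\tfrac{w}{2}\log\#(T_p/T_p')$ term via the rescaling of the Betti integral structure in $\det(M_B\otimes\C)^{\otimes w}$) from the finite contribution at $v\mid p$, then performs the same double-index telescoping over the filtration $F^{\bullet}$ on $\gr'^r\vphi^*(\mf M/\mf M')$ using the identification $\gr_{F'}^j\gr'^r\vphi^*(\mf M/\mf M')\cong\gr'^{r-j}(\mf M/\mf M')_{\dR}$ to collapse $\sum_r r\,\lambda(\gr^r\mf M_{\dR}/\gr^r\mf M'_{\dR})$ to $\lambda\bigl((\mf M/\mf M')/\vphi^*(\mf M/\mf M')\bigr)$; the only cosmetic difference is that the paper runs the telescoping in the opposite direction (starting from the degree and unwinding the filtrations) and passes explicitly to BKF modules via (\ref{eqn:lengthtonormlength}) before doing so.
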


\begin{proof}
    Suppose that for each place $v \mid p$, the inclusion $T_p' \into T_p$ of $\Zp[\Gal_{F_v}]$-modules gives rise to inclusions $\mf M_v(T_p') \to \mf M_v(T_p)$.
    Recall that 
    \[
    \mf M_v(T_p)_{\dR}:= \vphi^*   \mf M_v(T_p) \otimes_{\mf S} \mcO_{F_v} = M_{\dR}(T_p)_v
    \]
    and the analogously defined $\mf M_v(T_p')_{\dR}$
    are filtered $\mcO_K$-modules.
    By definition of $\mc L({ M})$, one has that 
    \begin{align}\label{eqn:LMLMprimeratiointermsofdR}
        \frac{\# \mc L({ M})_{\tor}}{\# \mc L({ M}')_{\tor}} \# \frac{\mc L({ M})}{\mc L({ M}')} 
        = \prod_{v \mid p} \prod_{r \in \Z} \# \left(  \frac{\gr^r (\mf M_v)_{\dR}}{\gr^r (\mf M'_v)_{\dR}}\right)^r.
    \end{align}
    The inclusion ${ M}' \to { M}$ induces an inclusion $\mc L({ M}') \subset \mc L({ M})$. The $\Z$-structures $\Z \cdot s'_{B,\sigma} \subset (\det M_{\dR}(\hat{T}'))^{\otimes w} \otimes_{\mcO_F, \sigma} \C$ and $\Z \cdot s_{B,\sigma}\subset (\det M_{\dR}(\hat{T}))^{\otimes w}\otimes_{\mcO_F, \sigma} \C$ from the respective Betti-\'etale comparisons satisfy 
    \[
    \# (\Z s_{B,\sigma })/(\Z s'_{B,\sigma }) = \# (\hat{T}/\hat{T}')^w. 
    \]
    These generators were used to define the norms $\abs{\cdot}_{\sigma}$ and $\abs{\cdot}_{\sigma}'$ which show up in the definition of $h({ M})$ and $h({ M}')$, respectively. 
    For $s \in \mc L({ M'}) \subset \mc L({ M})$, the relation above implies that
    \begin{align}\label{eqn:normisogeny}
        \abs{s}_\sigma = \# (\hat{T}/\hat{T}')^{w/2} \abs{s}_{\sigma}' 
    \end{align}
    Together, (\ref{eqn:LMLMprimeratiointermsofdR}) and (\ref{eqn:normisogeny}) for a $p$-power isogeny show that
    \[
        h({ M}') - h({{M}})
        = - \frac{1}{[F:\Q]}\sum_{v \mid p} \sum_{r \in \Z} r \cdot \log \#  \left(  \frac{\gr^r (\mf M_v)_{\dR}}{\gr^r (\mf M'_v)_{\dR}}\right) + \frac{w}{2} \log \# (T_p/T_p').
    \]
    In order to use the filtrations in \ref{subsubsec:filtrations} to rewrite the change in height in terms of slope, we need to convert these BK modules to BKF modules, which involves a twist by Frobenius. As stated after (\ref{eqn:LWEdeg}), we have that for an effective torsion BK module $\mf N_{BK}$, 
    \begin{align}\label{eqn:lengthtonormlength}
        \log_p \# \left( \mf N_{BK} / \vphi^* \mf N_{BK} \right) = [K:\Qp] \lambda \left( \mf N_{BKF} / \vphi^* \mf N_{BKF}\right),
    \end{align}
    where $\mf N_{BKF} := \mf N_{BK} \otimes_{\mf S, \vphi \circ f} \Ainf$, where $\vphi\circ f(u) := [\pi^{\tilt}]^p$ as in \ref{subsubsec:basechangefromBKtoBKF}.

    From now on, for a fixed (and omitted) $v \mid p$, we will write 
    \begin{align}
        \mf M &:= (\mf M_v(T_p))_{BKF} = \mf M_v(T_p) \otimes_{\mf S, \vphi \circ f} \Ainf\\
        \mf M_{\dR} &:= M_{\dR}(T_p)_v \otimes_{\mcO_{F_v}} \mcO_C = (\vphi^* \mf M_v(T_p) \otimes_{\mf S} \mcO_{F_v}) \otimes_{\mcO_{F_v}}\mcO_C = \vphi^* \mf M \otimes_{\Ainf} \frac{\Ainf}{\vphi(\xi)},
    \end{align}
    and we define $\mf M'$, $\mf M'_{\dR}$ analogously. 
       From now on in the proof, we will be working with BKF modules. We hope this does not cause the reader too much confusion.

    Recall that $\Fil'^w \vphi^*(\mf M/ \mf M') = \vphi(\xi)^w \vphi^* (\mf M/ \mf M')$, and there are inclusions $\Fil'^w \vphi^*(\mf M/ \mf M') \subset \vphi^* (\mf M/\mf M') \subset \mf M/ \mf M'$. The second inclusion is an inclusion because $\vphi$ is injective, since the quotient $\mf M/ \mf M'$ is $\vphi(\xi)$-torsion free. By taking quotients, we then have the relation 
\begin{align}    
        \lambda \left(  \frac{\mf M/\mf M'}{\vphi^* \mf M/\mf M'}\right) 
        &= \lambda \left( \frac{\mf M/\mf M'}{\vphi(\xi)^w \mf M/ \mf M'}\right) - \sum_{r=0}^{w-1}\lambda( \gr'^r \vphi^*(\mf M/\mf M')) \\
        &= \lambda \left( \frac{\mf M/\mf M'}{\vphi(\xi)^w \mf M/ \mf M'}\right) - \sum_{r=0}^{w-1}\sum_{j=0}^{r}\lambda( \gr^j_{F'}\gr'^r \vphi^*(\mf M/\mf M')) \\
        &=  \lambda \left( \frac{\mf M/\mf M'}{\vphi(\xi)^w \mf M/ \mf M'}\right) - \sum_{r=0}^{w-1}\sum_{j=0}^{r}\lambda(\gr'^{r-j} \vphi^*(\mf M/\mf M'))\\
        &= \lambda \left( \frac{\mf M/\mf M'}{\vphi(\xi)^w \mf M/ \mf M'}\right) - \sum_{r=0}^{w}(w-r)\lambda\left( \frac{\gr^r \mf M_{\dR}}{\gr^r \mf M'_{\dR}}\right) \\
        &= w \cdot \lambda\left( \frac{\mf M_{\dR}}{\mf M'_{\dR}} \right) -  \sum_{r=0}^{w}(w-r)\lambda\left( \frac{\gr^r \mf M_{\dR}}{\gr^r \mf M'_{\dR}}\right)  \\
        &= \sum_{r=0}^w r \lambda\left( \frac{\gr^r \mf M_{\dR}}{\gr^r \mf M'_{\dR}}\right).
\end{align}
    The second-to-last equality comes from the fact that $\mf M/\mf M'$ is $\vphi(\xi)$-torsion free, and the last equality follows from the fact that $(\mf M/\mf M')_{\dR} = \mf M_{\dR}/\mf M'_{\dR}$, and that $\lambda(\mf M_{\dR}/\mf M'_{\dR})= \sum_{r=0}^w \lambda(\gr^r \mf M_{\dR}/ \gr^r \mf M'_{\dR})$. Note that $\gr^r \mf M_{\dR}/ \gr^r \mf M'_{\dR}$ comes from a finitely generated $\Zp$-module which is $p$-power torsion, so the relation (\ref{eqn:lengthtonormlength}) also holds for this module, and so $[F_v:\Qp]\lambda\left( \frac{\gr^r \mf M_{\dR}}{\gr^r \mf M'_{\dR}}\right) = \log_p \#\frac{\gr^r \mf M_{\dR}}{\gr^r \mf M'_{\dR}} $.
    Then we get
    \[
        h({ M}') - h({{M}})
        = \log \# (T_p/T_p')\left( \frac{w}{2} - \frac{1}{[F:\Q]}\sum_{v \mid p}  [F_v:\Qp] \frac{  \lambda \left(  \frac{\mf M/\mf M'}{\vphi^* \mf M/\mf M'}\right)    }{\log_p \# (T_p/T_p')} \right),
    \]
    which becomes the desired formula after using the definition of slope. 

\end{proof}

\subsubsection{}
Analogously to the isogeny formula given by Faltings in \cite{faltings1986finiteness}, the height change under isogeny has two contributions: one from the ``finite'' contribution to the height, namely from the $\mc L({ M})/(\mcO_F \cdot s)$ term, and the other from the ``infinite'' contribution to the height, namely the metric at each infinite place of $F$. These correspond to the following quantities for the Kato--Koshikawa height.
    \[
        \Deg_{\dR, v} := \# \left( \frac{\mf M_v(T_p)/
        \mf M_v(T'_p)}{\vphi^*(\mf M_v(T_p)/
        \mf M_v(T_p'))} \right), \quad \quad
        \Deg_{\et, p} := \# (T_p/T_p').
    \]
    Then the height change from \Cref{prop:isogenyformulaslope} can be rewritten as
    \[
        h({ M}') - h({{M}}) = \frac{w}{2} \log \Deg_{\et,p} - \frac{1}{[F:\Q]} \sum_{v \mid p} \log \Deg_{\dR, v}.
    \]
This is the way the formula is written in \cite[Proposition 8.1]{koshikawa2015heights}. 

\subsection{Writing isogeny formula as integral over a Galois group}
To make the isogeny formula more amenable to taking limits over an infinite set of isogenies in $\mscr I({ M})_{\ol{F}}$, we want to rewrite the isogeny formula in such a way that it can accommodate situations when $T_p'$ is not a $\Gal_F$-stable submodule of $T_p$. 
\begin{lemma}\label{lemma:nuintegral}
    Suppose that $T_p$ is $\Gal_F$-stable, but $T_p'\subset T_p$ is only $\Gal_{F'}$-stable for some finite extension $F'/F$. Without loss of generality, we may assume $F'/F$ is Galois. 
    Then
    we can write $\nu(T_p,T_p')$ as 
    \begin{align}
    	\nu(T_p,T_p')
        =  \sum_{v\mid p} \frac{[F_v:\Qp]}{[F:\Q]} \int_{\Gal_F} \mu \left(\frac{\mf M_{v}(T_p)}{\mf M_{v}(\sigma T_p')}\right) d\sigma\label{eqn:nuGaloisorbits}
    \end{align}
    We use $d\sigma$ to denote the Haar measure of volume 1 on $\Gal(\ol{F}/F)$. For each $v \mid p$, we are implicitly choosing an extension of $v$ to a place of $\ol{F}$. 
\end{lemma}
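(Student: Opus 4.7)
The plan is to derive the integral formula by applying \Cref{prop:isogenyformulaslope} after base-changing both Koshikawa motives from $F$ to $F'$, where the isogeny is Galois-equivariant, and then reorganizing the resulting finite sum over places of $F'$ into an integral over $\Gal_F$ via the $\Gal(F'/F)$-action on the fibers above each $v\mid p$.

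First I would use that since $T_p' \subset T_p$ is $\Gal_{F'}$-stable, the inclusion defines a bona fide $p$-power isogeny of Koshikawa motives over $F'$. The Kato--Koshikawa height is invariant under finite base change (remark following \S\ref{ssc:normforheight}), so $h(M')-h(M)$ is the same whether computed over $F$ or $F'$. Applying \Cref{prop:isogenyformulaslope} to the isogeny viewed over $F'$ yields
\[
\nu(T_p,T_p') \;=\; \frac{1}{[F':\Q]}\sum_{w\mid p\text{ of }F'}[F'_w:\Qp]\,\mu\!\left(\frac{\mf M_w(T_p)}{\mf M_w(T_p')}\right).
\]

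Next I would regroup this sum by the place $v\mid p$ of $F$ below each $w$. For fixed $v$, the places $w\mid v$ of $F'$ form a single $\Gal(F'/F)$-orbit; with $\bar v:\bar F\hookrightarrow C$ the chosen extension and $w_0:=\bar v|_{F'}$, the stabilizer of $w_0$ in $\Gal(F'/F)$ is the decomposition group $D_{w_0}$, of order $[F'_{w_0}:F_v]$. Since each coset of $D_{w_0}$ in $\Gal(F'/F)$ contributes $|D_{w_0}|$ copies of the same summand, the orbit--stabilizer formula combined with $[F'_w:\Qp]=[F'_w:F_v][F_v:\Qp]$ gives
\[
\sum_{w\mid v}[F'_w:\Qp]\,\mu\!\left(\tfrac{\mf M_w(T_p)}{\mf M_w(T_p')}\right) \;=\; [F_v:\Qp]\sum_{\tau\in\Gal(F'/F)} \mu\!\left(\tfrac{\mf M_{\tau w_0}(T_p)}{\mf M_{\tau w_0}(T_p')}\right).
\]
Dividing by $[F':\Q]=[F':F][F:\Q]$, the factor $1/[F':F]$ turns the sum over $\Gal(F'/F)$ into the average, which in turn lifts to the integral over $\Gal_F$ against the unit-volume Haar measure, since the quotient map $\Gal_F\twoheadrightarrow\Gal(F'/F)$ pushes Haar measure forward to counting measure.

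The last step is to identify $\mu(\mf M_{\tau w_0}(T_p)/\mf M_{\tau w_0}(T_p'))$ with $\mu(\mf M_v(T_p)/\mf M_v(\sigma T_p'))$ for any lift $\sigma\in\Gal_F$ of $\tau$. This follows from the functoriality of the Breuil--Kisin equivalence (\Cref{thm:BKmodequivalence}) under the $\Gal_F$-action: replacing the chosen embedding $\bar v$ by $\bar v\circ\sigma$ carries the BK module at the place $\tau w_0 = (\bar v\circ\sigma)|_{F'}$ (computed with lattice $T_p'$) to the BK module at $v$ (computed with the $\sigma$-translated lattice $\sigma T_p'$), and this transport is Frobenius-equivariant, hence preserves the slope. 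The main technical point will be verifying this intertwining cleanly: one must match conventions for decomposition groups, uniformizers, and compatible $p^n$-th roots across the Galois-conjugate places $\tau w_0$ so that the two slope invariants genuinely coincide. Everything else is routine local--global bookkeeping for the Galois extension $F'/F$.
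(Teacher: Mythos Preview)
Your proposal is correct and follows essentially the same approach as the paper: both start from the formula for $\nu(T_p,T_p')$ over $F'$, regroup the sum over places $w\mid p$ of $F'$ by the place $v\mid p$ of $F$ below using the transitive $\Gal(F'/F)$-action, and then identify the BK module at the conjugate place $\tau w_0$ (with lattice $T_p'$) with the BK module at the fixed place $v$ (with the conjugate lattice $\sigma T_p'$). The paper carries out your final ``intertwining'' step via an explicit commutative diagram showing that the $\Gal_{F'_{\sigma v'}}$-action on $T_p'$ is isomorphic, via conjugation by $\rho(\tilde\sigma)$, to the $\Gal_{F'_{v'}}$-action on $\rho(\tilde\sigma)^{-1}T_p'$; this is exactly the transport of structure you describe.
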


\begin{proof}
    We start with 
    \begin{align*}
        \nu(T_p,T_p')&:= \frac{1}{[F':\Q]} \sum_{v' \mid p} [F'_{v'}:\Qp]\:\mu\left(\frac{\mf M_{v'}(T_p)}{\mf M_{v'}(T_p')} \right)
    \end{align*}
    where the summation is over all places $v' \mid p$ of $F'$.
    Fix a place $v \mid p$ of $F$, and make a choice of embedding $\ol{v}: \ol{F} \into \ol{\Qp}$ extending $v: F \into \ol{\Qp}$. 
    Recall that we assumed $F'/F$ is Galois, so that $\ol{v}$ restricts to a given place $v'$ of $F'$ above $v$, and the group $\Gal(F'/F)$ acts transitively on the set of places of $F'$ above $v$. Choosing the totally ramified $\Zp$-extension $F_{v,\infty}$ of $F_v$ also gives a totally ramified $\Zp$-extension $F'_{v',\infty}:= F_{v,\infty}F'_{v'}$. 
        \td{See Brinon-Conrad, you might need to pass to a finite extension of $F'$ to get a $\Zp$-extension of $F'$. Actually, I'm not sure why we do this.}

    We can rewrite this as 
    \begin{align*}
        \nu(T_p,T_p')&:= \frac{1}{[F':\Q]} \sum_{v' \mid p} [F'_{v'}:\Qp]\mu\left(\frac{\mf M_{v'}(T_p)}{\mf M_{v'}(T_p')} \right) \\
        &= \frac{1}{[F':\Q]} \sum_{v \mid p} [F_v:\Qp] \sum_{\sigma \in \Gal(F'/F)}\mu\left(\frac{\mf M_{\sigma v'}(T_p)}{\mf M_{\sigma v'}(T_p')} \right).
    \end{align*}
    The redundancy of the action of $\Gal(F'/F)$ on the places $v' \mid v$ is accounted for by dividing by $[F'_{v'}:F_v]$. 

    Now, instead changing the choice of subgroup $ \Gal_{F_{v'}} \subset \Gal_{F'}$ acting on the fixed sublattice $T_p'$, we can still get an isomorphic torsion BK module by changing the sublattice $T_p' \subset T_p$ itself.


For each $\sigma \in \Gal(F'/F)$, choose a lift $\tilde{\sigma}$ to $\Gal(\ol{F}/F)$. 
The map $\ol{v} \circ \tilde{\sigma}^{-1}: \ol{F} \to \ol{\Qp}$, when restricted to the subfield $F'$, corresponds to the place $\sigma v'$ of $F'$. Starting with the global representation 
\[
    \rho: \Gal_{F} \to \GL(T),
\]
such that the image of $\rho|_{\Gal_{F'}}$ preserves the sublattice $T_n'$. The following commutative diagram shows that the action of the subgroup $\Gal_{F'_{\sigma v'}}$ on $T_n'$ induced by the restriction of $\rho$ is isomorphic to the action of $\Gal_{F'_{\sigma v'}}$ on a distinct sublattice $\rho(\tilde{\sigma})^{-1}T_n'$ induced by the restriction of $\rho$. 
\[
    \begin{tikzcd}
        \Gal_{F'_{\sigma v'}} \arrow[r, "\rho|_{\Gal_{F'_{\sigma v'}}}"] \arrow[d,"\tau \mapsto \tilde{\sigma}^{-1}\tau\tilde{\sigma}"']& \GL(T_n') \arrow[d,"f \mapsto \rho(\tilde{\sigma})^{-1} f\rho(\tilde{\sigma}) "] \\
        \Gal_{F'_{v'}} \arrow[r, "  \rho|_{\Gal_{F'_{v'}}}"] & \GL(\rho(\tilde{\sigma})^{-1} T_n')
    \end{tikzcd}
\]
The purpose of the summation over places $v'\mid p$ of $F'$ in the change-of-height formula is to consider the Galois representations $T_p$ and $T_p'$ as $\Gal_{F_{v'}}$-representations for the different $v' \mid p$. Instead of considering one lattice $T_n'$ acted on by distinct conjugate subgroups of $\Gal_{F'}$, we can equivalently fix one subgroup $\Gal_{F'_{v'}}$ and consider its action on conjugate sublattices $\rho(\tilde{\sigma})^{-1}T_n'$ to get an isomorphic representation.
We write $\sigma T_n'$ for $\sigma \in \Gal(F'/F)$ in (\ref{eqn:nuGaloisorbits}) to denote the lattice $\rho(\tilde{\sigma})T_n'$ generated by any lift $\tilde{\sigma}$ of $\sigma$ to $\Gal_F$. 
\end{proof}

\section{Adelic Mumford--Tate conjecture and other assumptions}
\label{section:MTconj}

\subsection{Mumford--Tate conjecture}
The following setup and statement of the Mumford--Tate conjecture is adapted from the versions given in \cite{cadoret2020integral} and in 

Fix a subfield $k$ of $\C$. Let $X$ be a smooth projective variety over $k$. Fix an integer $w$. Let $ M$ be the Koshikawa motive of weight $w$ corresponding to the $w$th cohomology of $X$. Using the inclusion $k \subset \C$, we can take the Betti realization $M_{B} := H^w_{\Betti}(X(\C),\Z)/\text{torsion}$, which is a polarized $\Z$-Hodge structure. Let $G_{MT} \subset \GL(M_B)$ denote the associated Mumford--Tate group over $\Z$. 
This is the Zariski closure of the usual definition of the Mumford--Tate group $G_{MT} \times_{\Z} \Q$ as an algebraic group over $\Q$.

For every prime $\ell$, let $M_{\ell}:= H^w_{\et}(X_{\ol{k}}, \Z_\ell)/\text{torsion}$ be the $\ell$-adic \'etale realization. There is a continuous Galois representation
\[
    \rho_{{ M}, \ell}: \Gal(\ol{k}/k) \to \GL(M_{\ell}).
\]

\begin{defn}
    Fix $ M$ as above. The \emph{$\ell$-adic algebraic monodromy group} $G_\ell$ for $ M$ is the algebraic group over $\Z_\ell$ which is the Zariski closure of the image $\rho_{{ M}, \ell}(\Gal(\ol{k}/k)) $ in $\GL(M_{\ell})$.
\end{defn}

\nts{If you want to, add an explanation of how to define this even if the variety is defined over $\C$. But we don't really need to worry since we always have something defined over a number field. }

\begin{remark}
    Usually, the $\ell$-adic algebraic monodromy group is an algebraic group over $\Q_\ell$, but we want an integral version so that we can state an adelic version of the Mumford--Tate conjecture. 
\end{remark}

\begin{remark}\label{rmk:MTandMongroupscanvary}
    The groups $G_{MT}$ and $G_\ell$ depend on the choice of complex embedding of $k$ and on $\ell$, respectively.
\end{remark}

Despite not knowing whether the groups $G_\ell$ are independent of $\ell$, one can replace $k$ with a finite extension so that the $\ell$-adic algebraic monodromy groups $G_\ell$ are connected for all $\ell$. The following is originally due to Serre.
\begin{prop}[6.14, \cite{larsen1992independence}]
    The finite index open subgroup $\rho_{{ M},\ell }^{-1}(G_\ell^\circ(\Q_\ell)) \subset \Gal(\ol{k}/k)$ is independent of $\ell$, and therefore the groups $$G_\ell(\Q_\ell)/G^\circ_\ell(\Q_\ell) \cong \Gal(\ol{k}/k) / \rho_{{ M},\ell }^{-1}(G_\ell^\circ(\Q_\ell))$$ are independent of $\ell$. 
\end{prop}
Without loss of generality, we will assume from now on that the $G_\ell$ are connected. 
By \Cref{rmk:geomrepsdeRham}, if the field of definition $k$ is a number field $F$, then for every $v \mid p$ of $F$, the $p$-adic Galois representation $\Gal(\ol{F_v}/F_v)$ acting on $M_p$ is de Rham, hence Hodge--Tate. The following is due to Bogomolov.

\begin{theorem}[Th\'eor\`eme 1, \cite{bogomolov1980algebricite}] \label{thm:bogomolov}
    Let $F$ be a number field, and let $\rho: \Gal_F \to \GL_n(\Q_p)$ be a continuous $p$-adic representation of $\Gal_F$. Let $G$ be the algebraic subgroup over $\Qp$ which is the Zariski closure of $\rho(\Gal_F)$ in $\GL_n$. For each place $\ol{v}$ of $\ol{F}$ extending $v \mid p$ of $F$, suppose that the restriction of $\rho$ to $\Gal(\ol{F}_{\ol{v}}/F_v)$ is a Hodge--Tate representation. Then the image $\rho(\Gal_F)$ is an open subgroup of $G(\Qp)$ in the $p$-adic topology.
\end{theorem}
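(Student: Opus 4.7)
The plan is to show that $\mathfrak{h} := \mathrm{Lie}(\rho(\Gal_F))$, the Lie algebra of the image viewed as a compact $p$-adic Lie group, coincides with $\mathfrak{g} := \mathrm{Lie}(G)$. By standard $p$-adic Lie theory, a closed subgroup of a $p$-adic Lie group is open in the ambient group if and only if the two have the same Lie algebra, so this equality gives the openness of $\rho(\Gal_F)$ in $G(\Qp)$.

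First I would extract algebraic constraints on $\mathfrak{h}$ from Zariski density. Setting $\Gamma := \rho(\Gal_F)$, which is compact since $\Gal_F$ is, the subgroup $\Gamma$ normalizes itself and hence stabilizes $\mathfrak{h}$ under the adjoint action. Because $\Gamma$ is Zariski dense in $G$, the normalizer $N_G(\mathfrak{h})$ of $\mathfrak{h}$ (an algebraic subgroup of $G$) must equal $G$, so $\mathfrak{h}$ is $\mathrm{Ad}(G)$-stable, i.e.\ an ideal of $\mathfrak{g}$ (after passing to a finite extension of $F$ to arrange that $G$ is connected). Let $H \subset G$ be the connected normal algebraic subgroup whose Lie algebra is the algebraic hull of $\mathfrak{h}$; it then suffices to show $H = G$, equivalently that the connected algebraic quotient $G/H$ is trivial.

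Next I would invoke Sen's theorem to feed in the Hodge--Tate hypothesis at every $v \mid p$. Sen attaches to the Hodge--Tate representation $\rho|_{\Gal_{F_v}}$ a semisimple operator $\theta_v$ lying in $\mathfrak{h} \otimes_{\Qp} C$ whose eigenvalues are the Hodge--Tate weights of $\rho|_{\Gal_{F_v}}$; moreover $\theta_v$ is the derivative at the identity of a Hodge--Tate cocharacter $\mu_v : \mathbb{G}_{m,C} \to G_C$. Since $\theta_v$ lies in $\mathrm{Lie}(H_C)$ and $\mathbb{G}_m$ is connected, $\mu_v$ factors through $H_C$, and so the composition $\ol{\rho} := \pi \circ \rho : \Gal_F \to (G/H)(\Qp)$ with the projection $\pi : G \to G/H$ is Hodge--Tate with all weights zero at every place above $p$.

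The main obstacle is then to deduce that $\ol{\rho}$ must have finite image, which together with the Zariski density of $\ol{\rho}(\Gal_F)$ in the connected group $G/H$ forces $G/H$ to be trivial, giving $\mathfrak{h} = \mathfrak{g}$. This finiteness is exactly the content of Serre's analysis of $p$-adic representations via class field theory: a continuous character $\chi : \Gal_F \to \Qp^{\times}$ with compact image that is Hodge--Tate of weight zero at every $v \mid p$ corresponds, under global class field theory, to a character of the id\`ele class group whose local components at all places above $p$ have finite image, and a compactness argument together with a Chebotarev density argument forces $\chi$ itself to have finite image. The general statement is then reduced to the character case by composing $\ol{\rho}$ with enough linear representations of $G/H$ and passing to Jordan--H\"older factors. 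This contradiction yields $H = G$ and completes the proof.
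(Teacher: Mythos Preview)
The paper does not prove this theorem; it quotes it from Bogomolov and uses it as a black box. Your sketch has the right ingredients but the packaging contains a real gap. You take $H$ with $\mathrm{Lie}(H)$ equal to the algebraic hull $\mathfrak{h}^{\mathrm{alg}}$ of $\mathfrak{h}$ and aim to show $H=G$; but that only yields $\mathfrak{h}^{\mathrm{alg}}=\mathfrak{g}$, not $\mathfrak{h}=\mathfrak{g}$, and for a torus these can genuinely differ. Worse, with this choice of $H$ the Hodge--Tate hypothesis does no work: since $\mathfrak{h}\subset\mathrm{Lie}(H)$, the image $\pi(\Gamma)\subset(G/H)(\Qp)$ already has trivial Lie algebra and is therefore finite, with no appeal to Serre needed. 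And your reduction ``compose with linear representations and pass to Jordan--H\"older factors'' only lands in characters if every irreducible representation of $G/H$ is one-dimensional, i.e.\ if $G/H$ is solvable, which nothing guarantees.

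The standard argument instead first shows $[\mathfrak{g},\mathfrak{g}]\subset\mathfrak{h}$ directly from Zariski density: an open subgroup of $\Gamma$ acts trivially on $\mathfrak{g}/\mathfrak{h}$ via $\mathrm{Ad}$ (because its Lie algebra $\mathfrak{h}$ does, $\mathfrak{h}$ being an ideal), hence by Zariski density so does the connected group $G$. One then passes to the abelianization $G\to G^{\mathrm{ab}}$; the composite $\Gal_F\to G^{\mathrm{ab}}(\Qp)$ is still Hodge--Tate with Zariski-dense image in a commutative group, and \emph{now} Serre's theorem on locally algebraic abelian representations (via class field theory) applies to the torus part and forces open image. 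This gives $\pi(\mathfrak{h})=\mathrm{Lie}(G^{\mathrm{ab}})$, which combined with $[\mathfrak{g},\mathfrak{g}]\subset\mathfrak{h}$ yields $\mathfrak{h}=\mathfrak{g}$. The Hodge--Tate hypothesis is genuinely used precisely at this torus step, and Sen's theorem enters only in the proof of the abelian case.
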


Given an embedding $\tilde{\sigma}: \ol{k} \into \C$, there is a canonical \'etale-Betti comparison isomorphism $M_B \otimes_{\Z} \A_f \to \prod_{\ell} M_\ell \otimes \Q$ of $\A_f$-modules \cite[Expos\'e XI]{SGA4Tome3}.
Therefore we may view both $G_{MT} \times_{\Z}\Z_\ell$ and $G_\ell$ as subgroup schemes of $\GL(M_\ell)$.

\nts{The Grothendieck motive has an adelic realization, but implicitly there is a choice of $\Z_p$-module structure at every $p$ right? Actually, I don't think there is. So you only know the $\Qp$ structure. You need the choice of $\hat{T}$ to get Koshikawa's $\Z$-motive to get the $\Z_p$-structure. Does the Grothendieck $\Q$-motive have a $\Z$-structure on the Betti realization? NO. It seems like you can get the $\Z$-structure from the Betti-etale comparison using the integral structures on etale cohom.}

\begin{conj}
    [Mumford--Tate conjecture]\label{conj:MumfordTate}
    Using the comparison isomorphism as above for some $\tilde{\sigma}$ and $\ell$, $G_{MT} \times_{\Z}\Z_\ell = G_{\ell}$ as connected subgroup schemes of $\GL(M_\ell)$. 
\end{conj}

\td{
The known results of the Mumford--Tate conjecture are:

- Pink (abelian varieties $X$ with $\End(X) = \Z$ and some numerical conditions on $\dim X$). Other types of endomorphism algebras have been done by Banaszak, Gajda, Krason I, II, III 2006 and 2010.

- Moonen 2017 On the Tate and Mumford--Tate conjectures (Section 2. Motives of K3 type.)

- MTC is true for CM type abelian varieties, Pohlmann 

- simple AV of dimension 1 or prime dimension. Then MTC is true for all powers of X (Serre in dim 1, Tankeev 1982, 1988 for prime dimension)

- Vasiu 2008 proves some cases of MTC under assumption about Lie types of Shimura data. 

- In general, it's not even known that $G_\ell$ is a subgroup of $G_{MT}$.
}

As mentioned in \Cref{rmk:MTandMongroupscanvary}, the Mumford--Tate conjecture a priori depends on the choice of $F \into \C$ and choice of $\ell$.

\td{
Known results for independence of $\tilde{\sigma}, \ell$:

- Larsen and Pink: MTC for AVs is independent of $\ell$.

- Commelin has shown that for abelian motives, the MTC is independent of $\ell$. 

- In general, independence of $\ell$ is not known. Partial results in Larsen and Pink.
}

\subsection{The adelic Mumford--Tate conjecture}
The conjecture may be stated when working over subfield of $\C$ of finite type, but let us work with a motive ${ M} = {\bf H}^w(X)$ for smooth proper $X$ over a number field $F$, and fix a complex embedding $\sigma: F \into \C$. 

Suppose that the Mumford--Tate conjecture (\Cref{conj:MumfordTate}) is true for some choice of $\ell$. By \Cref{thm:bogomolov}, the map $\rho_\ell: \Gal_F \to G_{MT}(\Z_\ell)$ has open image in the $\ell$-adic topology. 

If we consider all $\ell$-adic representations together, and we assume that the Mumford--Tate conjecture is true for $ M$ for every $\ell$, we get a map 
\[
    \hat{\rho}: \Gal_F \to G_{MT}(\hat{\Z}).
\]
Serre suggested the following extension of the Mumford--Tate Conjecture.

\begin{conj}[Adelic Mumford--Tate conjecture, 11.4, \cite{serre1994proprietesconjecturales}] \label{conj:adelicMT}
    Suppose that the Betti realization $M_B$ is Hodge-maximal. Then the image of $\hat{\rho}$ is an open subgroup of $G_{MT}(\hat{\Z})$ in the adelic topology. In particular, for large enough $\ell$, $\rho_\ell: \Gal_{F} \to G_{MT}(\Z_\ell)$ is surjective. 
\end{conj}

\begin{defn}
    Suppose that $M_B$ is a polarized $\Z$-Hodge structure, with the Hodge structure given by the map $h: \mb S \to (G_{MT})_{\R}$. We say that $M_B$ is \emph{Hodge-maximal} if there does not exist a nontrivial isogeny of connected $\Q$-groups $G' \to (G_{MT})_{\Q}$ such that the Hodge structure lifts along the isogeny to a map $h': \mb S \to G'_{\R}$. 
\end{defn}

\begin{remark}
    By \cite[Remark 2.6]{cadoret2020integral}, it is a necessary condition that $M_B$ is Hodge-maximal in order for \Cref{conj:adelicMT} to hold. Otherwise, the fact that $G'(\A_f) \to G_{MT}(\A_f)$ does not have open image is an obstruction to $\hat{\rho}(\Gal_F)$ having open image in $G_{MT}(\hat{\Z})\subset G_{MT}(\A_f)$. 
\end{remark}

The following remark closely follows \cite[\S 2.2]{cadoret2020integral}, which in turn closely follows \cite[\S 0.2]{wintenberger2002extension}. 
\begin{remark} \label{rmk:pi1}
    Once again, let the Hodge structure on $M_B$ be given by the map 
    $$h: \mb S \to (G_{MT})_{\R} \subset \GL(M_B)_{\R}.$$
    Let $h_{\C}: \mb S_{\C} \to (G_{MT})_{\C}$ denote the base change to $\C$, and let $\bm \mu': (\Gm)_{\C} \to (G_{MT})_{\C}$ denote the Hodge cocharacter, given by $\bm \mu'(z):= h_{\C}(z,1)$. 
    By definition of the Mumford--Tate group as the smallest group defined over $\Q$ containing the image of $h$,
    the images of $\sigma^*h_{\C}$ for $\sigma \in \Aut_{\Q}(\C)$ must generate $G_{MT}$ as an algebraic group. One can check that the images of $\sigma^*\bm{\mu'}_{\C}$ also generate $G_{MT}$ as an algebraic group, which is defined in \cite[\S 1]{borovoi1998abelian} as $\pi_1(G_{MT}) = X_*/\angles{R^\vee}$, the quotient of the cocharacter lattice $X_*$ by the coroot lattice $\angles{R^\vee}$. Each cocharacter $\sigma^* \bm \mu ' $ defines a $G_{MT}$-conjugacy class of cocharacters and equivalently a Weyl group orbit of cocharacters landing in a specific maximal torus. The Weyl group acts trivially on the quotient $\pi_1(G_{MT})$, so the $\sigma^* \bm \mu ' $ define elements of $\pi_1(G_{MT})$. The fact that the images of these cocharacters generate $G_{MT}$ as a group means that they generate a full-rank sublattice of $X_*$, and therefore the images $(\sigma^* \bm \mu ')_* (\pi_1(\Gm))$ generate a finite-index subgroup of $\pi_1(G_{MT})$. 
    
    One can check that the condition that $h$ is Hodge-maximal is equivalent to the condition that the subgroups $\sigma^*\bm \mu ' (\pi_1(\Gm))$ for all $\sigma \in \Aut_{\Q}(\C)$ generate the fundamental group $\pi_1(G_{MT})$.
\end{remark}

\subsubsection{} \label{subsubsec:centralisog}
We do not necessarily know that our motive $ M$ is Hodge-maximal. There exists a $G'$ over $\Q$ and a central isogeny $G' \to G_{MT}$ such that the lift $h': \mb S \to G'_{\R}$ is itself Hodge-maximal. Indeed, the group $G'$ corresponds to the finite cover of $G_{MT}$ defined by the finite-index subgroup of $\pi_1(G_{MT})$ generated by the $\sigma^*\bm \mu ' (\pi_1(\Gm))$ for all $\sigma \in \Aut_{\Q}(\C)$ (see \Cref{rmk:pi1}). Moreover, the cocharacters $\sigma^* \bm \mu'$ (and hence $\sigma^* h$) all lift to $G'$, giving us the Hodge structure for $G'$.

\subsubsection{}\label{subsubsec:lifting}
One may lift the Galois representations to $G'$ as well. Indeed, once the Hodge cocharacter $\bm \mu'$ lifts to $G'$, the following result of Wintenberger shows that the $\ell$-adic Galois representations for all $\ell$ simultaneously lift to be valued in $G'$, up to restricting to the subgroup $\Gal_{E} \subset \Gal_F$ for a finite extension $E/F$. Note that applying this result requires the assumption that the Hodge classes defining the Mumford--Tate group are absolute Hodge, and that these are compatible with the $p$-adic \'etale-de Rham comparison isomorphism. 

\begin{theorem}
    [\cite{wintenberger1995relevement}, Th\'eor\`eme 2.1.7]
    Let $F$ be a number field, let $G/\Q$ be a linear algebraic group, and let $(\rho_\ell)_\ell$ be a system of $\ell$-adic Galois representations $\rho_\ell: \Gal_F \to G(\Q_\ell)$ for all primes $\ell$. 
    Suppose that $(\rho_\ell)_\ell$ comes from the cohomology of a smooth proper variety defined over $F$. \td{What is the actual condition?}
    Suppose further that the collection of Hodge cycles defining the Mumford--Tate group are absolute Hodge, defined over $F$, and that they are compatible with the $p$-adic \'etale-de Rham comparison isomorphism theorem.
    Let $\pi: G' \to G$ be a central isogeny of linear algebraic groups defined over $\Q$. 
    Suppose that the Hodge cocharacter $\bm \mu': \Gm \to G_{\R}$ lifts to $G'_{\R}$. 
    Then there exists a finite extension $E/F$, a finite set of places $S'$ of $E$, and a system of $\ell$-adic Galois representations $(\rho'_\ell)$ with $\rho'_{\ell}: \Gal_E \to G'(\Q_\ell)$ lifting $(\rho_\ell)$. 
    These have good reduction outside of $S'$ in the sense of \Cref{defn:goodsemistablered}.  
\end{theorem}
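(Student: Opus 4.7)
The plan is to follow the standard paradigm for lifting Galois representations through a central isogeny: reduce to a cohomological obstruction problem, solve the local lifting problem at each place of $F$, and patch the local lifts to a global lift after enlarging $F$ to kill a finite residual obstruction. Set $Z := \ker \pi$, a finite central subgroup scheme of $G'$. For each $\ell$, the obstruction to lifting $\rho_\ell$ to a homomorphism $\rho'_\ell: \Gal_F \to G'(\Q_\ell)$ lies in a Galois cohomology group $H^2(\Gal_F, Z(\overline{\Q}_\ell))$, and the ambiguity of any two lifts lies in $H^1(\Gal_F, Z(\overline{\Q}_\ell))$. Both groups are torsion of exponent dividing $\#Z(\overline{\Q})$, so after a controlled finite extension $E/F$ chosen independently of $\ell$ these obstructions can be annihilated uniformly.

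First I would establish local liftability, which splits into three cases. At archimedean places the result is given verbatim by the hypothesis that $\bm\mu'$ lifts to $G'_\R$. At finite places $v$ of residue characteristic different from $\ell$, the semistable reduction assumption and Grothendieck's monodromy theorem force $\rho_\ell|_{I_v}$ to factor through an extension of a finite group by a unipotent subgroup; such subgroups lift through a central isogeny essentially formally (the unipotent radical lifts uniquely, the finite part after a further finite extension). The substantive case is $v \mid \ell$. Here the absolute-Hodge and \'etale--de Rham compatibility hypotheses imply that the $G$-conjugacy class of the Hodge filtration on $D_{\dR}(\rho_\ell|_{\Gal_{F_v}})$ is the one determined by $\bm\mu'$; since $\bm\mu'$ lifts to $G'_\R$ by hypothesis, one can then construct a $G'$-valued admissible filtered $(\varphi, N)$-module lifting the one attached to $\rho_\ell|_{\Gal_{F_v}}$, and Colmez--Fontaine provides the desired local Galois-representation lift $\rho'_{\ell, v}: \Gal_{F_v} \to G'(\Q_\ell)$.

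With compatible local lifts at every place of $F$ in hand, the global obstruction class has vanishing restriction at every completion, so it lies in the finite kernel of the localization map $H^2(\Gal_F, Z(\overline{\Q}_\ell)) \to \prod_v H^2(\Gal_{F_v}, Z(\overline{\Q}_\ell))$ and can be annihilated by passing to a suitable finite extension $E/F$ depending only on $Z$. The same $E$ works for all $\ell$ simultaneously; choosing a global lift over $E$ and, if necessary, twisting by an $H^1$-class to match the chosen local lifts yields the desired system $(\rho'_\ell)$, and good reduction outside a finite set $S'$ of places of $E$ is inherited from the local lifts constructed above. The hard part will be the $v \mid \ell$ case of local liftability: it is the only step requiring genuine $p$-adic Hodge theoretic input, and it is precisely here that the compatibility of the absolute Hodge cycles with the \'etale--de Rham comparison is indispensable, as that compatibility is what converts the archimedean lift of $\bm\mu'$ into the $\ell$-adic information needed to constrain the local representation.
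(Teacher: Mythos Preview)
The paper does not give a proof of this theorem; it is stated as a citation of Th\'eor\`eme 2.1.7 from \cite{wintenberger1995relevement} and used as a black box. There is therefore nothing in the paper to compare your proposal against.

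That said, your outline is broadly in the spirit of Wintenberger's actual argument: the problem is indeed a cohomological obstruction problem for the finite central kernel $Z$, one solves it locally (with the $v\mid\ell$ case being the substantive one requiring $p$-adic Hodge theory and the lift of the Hodge cocharacter), and then patches. The one place where your sketch is too glib is the claim that a single finite extension $E/F$, depending only on $Z$, kills the global obstruction for all $\ell$ simultaneously. This uniformity in $\ell$ is precisely the delicate point of Wintenberger's paper and does not follow just from the exponent of $Z$ being bounded; it requires a careful analysis (via Tate's global duality and the structure of the local lifts) to show that the obstruction classes for different $\ell$ are controlled by a common arithmetic datum. If you intend to reprove the result rather than cite it, that step would need real work.
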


\nts{Wintenberger is working with $\Q_\ell$-points of these groups, not $\Z_\ell$. Is the lift unique in some sense? Yes}
\td{Need to justify landing in the integral points of $G'$ (see discussion with Mark). Also, it's not known that this is a compatible system in the sense of someone (?), but some partial results are known. For example the lifts can be chosen to be crystalline? Not sure if it's better to put it here, but also need to explain the mod $p$ version, and why you should still have a complex conjugation relation.}

Serre conjectures that there exists a motive $ M'$ which gives rise to the Hodge structure and Galois representations landing in $G'$ \cite[11.8]{serre1994proprietesconjecturales}. 

Granting the adelic MT conjecture for this new motive $ M'$, one expects that the image of $\Gal_F$ in $G'(\A_f)$ is open. Therefore, the best that one can hope for $ M$ is that the image $\hat{\rho}_{ M}(\Gal_F)$ is open in the image of $G'(\hat{\Z})\to G_{MT}(\hat{\Z})$. We take this to be our adelic Mumford--Tate conjecture.

\begin{conj}[non-Hodge-maximal adelic Mumford--Tate conjecture]\label{conj:nonHodgemaxadelicMT}
    Suppose that ${ M} = { H}^w(X)$ is a motive coming from a smooth projective variety $X$ over a number field $F$. Fix a complex embedding $\sigma: F \into \C$. Let $G_{MT}$ be the Mumford--Tate group of the Betti realization $M_B$ with respect to $\sigma$. 
    Let $\pi: G' \to G_{MT}$ be the central isogeny constructed as in \ref{subsubsec:centralisog}. 
    Assume that the Mumford--Tate conjecture (\Cref{conj:MumfordTate}) holds for ${ M}$ with respect to $\sigma$ and for all $\ell$.
    Then, for a finite extension $E/F$, the image of $\hat{\rho}(\Gal_{E})$ is open in the image of $\pi: G'(\hat{\Z}) \to G_{MT}(\hat{\Z})$. 
\end{conj}




\subsection{Hodge and Hodge--Tate cocharacters}
Let $X$ be a smooth projective variety over a number field $F$. The following subsection repeats notation used in \cite{kisinmocznorthcott}.

Let $C$ denote the completion of an algebraic closure of $\Qp$. We fix an isomorphism $C \cong \C$. We also fix a finite place $v \mid p$ of $F$ and extend it to an embedding $v: \ol{F} \to C$. Using $C \cong \C$, this gives an embedding $\tilde{v}: \ol{F} \to \C$.

\subsubsection{Hodge cocharacter}
As in \Cref{rmk:pi1}, let the Hodge structure on $M_{B, \tilde{v}} = H^w_B(X(\C),\Z)/\text{torsion}$ be given by the map $h: \mb S \to (G_{MT})_{\R} \subset \GL(M_{B,\tilde v})_{\R}$. 
Let $h_{\C}: \mb S_{\C} \to (G_{MT})_{\C}$ denote the base change to $\C$, and let $\bm \mu'_v: (\Gm)_{\C} \to (G_{MT})_{\C}$ denote the Hodge cocharacter. In particular, for an element $x \in (M_{B,\tilde v} \otimes \C)^{p,q}$ and $z \in \C^\times$, $\bm \mu'(z)(x) = z^{-p}x$ \cite[\S 2, p.~26]{milne2005introduction}.
This gives the Hodge decomposition
\begin{align}\label{eqn:hodgedecomp}
    H^w_B(X(\C), \Q)\otimes_{\Q} \C \cong H^w_{HT}(X/F) \otimes_F \C. 
\end{align}
Here, we define the Hodge cohomology
\[
    H^w_{HT}(X/F):= \gr^* H^w_{\dR}(X/F)
\]
as the graded $F$-vector space associated to the filtered de Rham cohomology.

Because we have fixed the embedding $\tilde v: \ol{F} \to \C$, we get a canonical isomorphism \cite[Expos\'e XI]{SGA4Tome3}
\[
    H^w_{\et}(X_{\ol{F}}, \Qp) \otimes_{\Qp} \C \cong H^w_B(X(\C), \Q)\otimes_{\Q} \C.
\]
So we may view $\bm \mu'_v : (\Gm)_{\C} \to \GL( H^w(X_{\ol{F}}, \Qp) \otimes \C )$.

Let $c \in \Aut(\C) \cong \Aut(C)$ denote complex conjugation. 
The $\C$-linear map
\begin{align*}
    (H^w_{B}(X(\C),\Q)\otimes\C)\otimes_{\C, c}\C 
    & \xto{c}
    H^w_{B}(X(\C),\Q)\otimes\C\\
    (x \otimes z) \otimes z'& \mapsto x \otimes \ol{z} z. 
\end{align*}
puts a new grading on the left-hand side using the Hodge decomposition on the right-hand side. 

We can identify the left-hand side $(H^w_{B}(X(\C),\Q)\otimes\C)\otimes_{\C, c}\C$ with $H^w_{B}(X(\C),\Q)\otimes\C$ as $\Q$-vector spaces in a set-theoretic way, where $(x \otimes z)\otimes z'$ corresponds to $x \otimes z\ol{z}'$. This new grading on the left-hand side, now viewed as $H^w_{B}(X(\C),\Q)\otimes\C$, defines a new cocharacter which we denote 
\begin{align}
    c^* \bm \mu'_v: (\Gm)_{\C}\to \GL(H^w_{B}(X(\C),\Q)\otimes\C).
\end{align}
Hodge symmetry dictates that the $p$th graded piece on $H^w_{B}(X(\C),\Q)\otimes\C$ with respect to the grading from $ \bm \mu'_v$ corresponds to the $(w-p)$th graded piece on $H^w_{B}(X(\C),\Q)\otimes\C$ with respect to the grading from $ c^*\bm \mu'_v$. In other words, we have the relation
\begin{align}\label{eqn:hodgecocharsymmetry}
    \bm \mu'_v \cdot c^*\bm \mu'_v = \bm w^{-1},
\end{align}
where $\bm w^{-1}: (\Gm)_{\C}\to \GL(H^w_B(X(\C),\Q)\otimes \C)$ is the inverse of the weight cocharacter and acts on $H^w_{B}(X(\C),\Q)\otimes\C$ via $\bm w^{-1}(z): v \mapsto z^{-w}v$.

\subsubsection{Hodge--Tate cocharacter}\label{subsubsec:HTcochar}
By \cite[Theorem 8.1]{faltings1989crystalline} (and ignoring the Galois action), there is a canonical isomorphism of graded $C$-vector spaces
\begin{align}
    H^w_{\et}(X_{\ol{F}}, \Qp)\otimes_{\Qp} C \cong H^w_{\et}(X_{\ol{F_v}}, \Qp)\otimes_{\Qp} C \cong H^w_{HT}(X/F) \otimes_{F, v}C,
\end{align}
which gives rise to what is known as the Hodge--Tate decomposition on the left-hand side.
The first isomorphism follows from smooth base change. \td{check this!!}
Let
\begin{align}
    \bm \mu_v: (\Gm)_C \to \GL(H^w_{\et}(X_{\ol{F}}, \Qp)\otimes_{\Qp} C)
\end{align}
denote the cocharacter corresponding to the Hodge--Tate decomposition.

\subsection{Absolute Hodge conjecture and compatibility with $D_{\dR}$}

Another assumption we need to make is to ensure that we can compare the Hodge cocharacter with respect to $\tilde v$ and Hodge--Tate cocharacter with respect to $v$. In particular, we want these two characters to be conjugate by an element in $G_{MT}(C)$. Moreover, we want to translate the relation of Hodge cocharacters from (\ref{eqn:hodgecocharsymmetry}) to a relation on Hodge--Tate cocharacters. This relation is what allows us to lower-bound the change in height in both the fixed prime and large prime cases of the proof of the main theorem. 

\subsubsection{}
Fix a choice $v: \ol{F} \into C$ and an isomorphism $C \cong \C$, which yields $\tilde v: \ol{F} \into \C$.
The choice of $\tilde v |_{F}: F \into \C$ defines a comparison isomorphism
\begin{align*}
    H^w_B(X(\C), \Q) \otimes_{\Q} \C \cong H^w_{\dR}(X/F) \otimes_F \C.
\end{align*}
The choice of $\tilde v: \ol{F}\to \C$ defines a comparison isomorphism
\begin{align*}
    H^w_B(X(\C), \Q) \otimes_{\Q} \A_f \cong \left(\prod_p H^w_{\et}(X_{\ol{F}}, \Z_p)\right) \otimes_{\Z} \Q.
\end{align*}
For convenience, we will write 
\[  
    H^w_{\et}(X_{\ol{F}},\A_f) := \left(\prod_p H^w_{\et}(X_{\ol{F}}, \Z_p)\right) \otimes_{\Z} \Q.
\]
These together define the isomorphism
\begin{align}
    \label{eqn:bettiderhametale}
     H^w_B(X(\C), \Q) \otimes_{\Q} (\A_f \times \C) \cong   H^w_{\et}(X_{\ol{F}},\A_f) \times \left( H^w_{\dR}(X/F) \otimes_F \C \right). 
\end{align}
\nts{Might consider including the twists to make everything weight 0.}

\nts{Define Hodge classes.}

\begin{conj}
    [Hodge classes are absolute Hodge, cf.~Theorem 2.11, \cite{deligne1982hodge}]\label{conj:absolutehodgegeneral}
    \label{conj:absolutehodge}
    Let $X$ be a smooth projective variety over a algebraically closed field $k = \ol{k}$ of finite transcendence degree over $\Q$. Suppose that a pair $(s_{\A_f},s_{\dR}) \in  (H^w_{\et}(X,\A_f) \times  H^w_{\dR}(X/k))^{\otimes}$ comes from a Hodge class in $(H^w_B(X(\C),\Q))^\otimes$ for a given embedding $\sigma: k \into \C$. Then for every embedding $k \into \C$, $(s_{\A_f},s_{\dR})$ comes from a Hodge class under the comparison isomorphism with Betti cohomology. 
\end{conj}
\begin{rmk}
    We say that such pair $(s_{\A_f},s_{\dR})$ satisfying the statement of the conjecture is an \emph{absolute Hodge class}. The conjecture is saying that all Hodge classes for any embedding $k \into \C$ are absolute Hodge classes. 
\end{rmk}

The absolute Hodge conjecture was proved by Deligne for abelian varieties \cite{deligne1982hodge}.
\td{Add other implications/known results about AH conjecture, it would make Tate conjecture imply Hodge conjecture?}

\subsubsection{}\label{subsubsec:dRalgebraic}
Let's return to our setting of $X$ being defined over a number field $F$. For our fixed $\tilde v: \ol{F} \to \C$, another definition of the Mumford--Tate group $(G_{MT,\tilde v})_{\Q}$ of $H^w_B(X(\C), \Q)$ is the group stabilized by a finite set of Hodge cycles $\{s_\alpha\}$ in $(H^w_B(X(\C),\Q))^{\otimes}$, the direct sum of all tensor powers of $H^w_B(X(\C),\Q)$ and its dual. 
Under the isomorphism (\ref{eqn:bettiderhametale}), these cycles in Betti cohomology give rise to pairs $\{(s_{\alpha,\A_f}, s_{\alpha,\dR})\}$.
If the absolute Hodge conjecture is true for $X/F$, then it follows that there is a finite extension $F'/F$ such that the corresponding $s_{\alpha,\dR}$ all live in $H^w_{\dR}(X_{F'}/F') \subset H^w_{\dR}(X_{\ol F}/\ol F)$ \cite[Proposition 2.9(b)]{deligne1982hodge}, or see \cite[2.4.1]{kisinmocznorthcott} for additional explanation.

In addition, $p$-adic Hodge theory provides an \'etale--de Rham comparison isomorphism. As cited in \ref{subsubsec:HTcochar}, Faltings proves the following. 
\begin{theorem}[Theorem 8.1, \cite{faltings1989crystalline}]\label{thm:faltingsetaledR}
    Let $X$ be smooth and separated over a finite extension $K/\Qp$. Then there is an isomorphism
    \[
        H^*_{\et}(X_{\ol{K}},\Qp)\otimes_{\Qp} \BdR \cong H^*_{\dR}(X/K) \otimes_K \BdR
    \]
    respecting the Galois action and filtration on both sides. 
\end{theorem}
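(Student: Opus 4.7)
The plan is to follow Scholze's perfectoid approach rather than Faltings' original almost-mathematical argument, since it provides a cleaner conceptual framework compatible with the $\Ainf$-cohomology machinery used elsewhere in the paper. I would work on the pro-\'etale site of the adic space associated to $X_K$, which has the advantage that continuous \'etale cohomology with $\Qp$-coefficients is realized directly as sheaf cohomology of a constant sheaf, bypassing the inverse-limit subtleties of the classical \'etale topology.

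The central construction is a pair of period sheaves on the pro-\'etale site: an arithmetic sheaf $\mathbb{B}_{\dR}^+$, built by completing the perfectoid analogue of Fontaine's $\theta$-map along its kernel, and a geometric enhancement $\mathcal{O}\mathbb{B}_{\dR}^+$, which incorporates the structure sheaf and carries a natural connection $d$. The key technical input would be the Poincar\'e lemma: the complex
\[
0 \to \mathbb{B}_{\dR}^+ \to \mathcal{O}\mathbb{B}_{\dR}^+ \xrightarrow{d} \mathcal{O}\mathbb{B}_{\dR}^+ \otimes \Omega^1_{X_K} \xrightarrow{d} \mathcal{O}\mathbb{B}_{\dR}^+ \otimes \Omega^2_{X_K} \xrightarrow{d} \cdots
\]
is exact. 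One proves this locally on small affinoid perfectoid opens on which $X$ admits \'etale coordinates, exploiting a power-series description of $\mathcal{O}\mathbb{B}_{\dR}^+$ in a set of period variables that reduces the exactness to an explicit contraction-homotopy argument.

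Granted the Poincar\'e lemma, the theorem would follow in two steps. First, applying $R\Gamma$ of the pro-\'etale site to the resolution and running the associated hypercohomology spectral sequence identifies $R\Gamma_{\et}(X_{\ol K}, \Qp) \otimes_{\Qp} \BdR^+$ with the hypercohomology of $\mathcal{O}\mathbb{B}_{\dR}^+ \otimes \Omega^\bullet_{X_K}$. Second, a projection-formula type computation identifies this hypercohomology with $R\Gamma_{\dR}(X/K) \otimes_K \BdR^+$, using that $\mathcal{O}\mathbb{B}_{\dR}^+$ is, up to completion, the structure sheaf adjoined to period variables that contribute nothing on cohomology once the connection is incorporated. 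Inverting $\xi$ then gives the stated $\BdR$-version, and tracking Galois actions and induced filtrations throughout the two identifications gives the compatibilities in the conclusion.

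The hard part will be the Poincar\'e lemma. Although local in nature, the exactness requires homotopies that are compatible with the completion defining $\mathbb{B}_{\dR}^+$, and this in turn relies on the almost purity theorem to guarantee that affinoid perfectoid opens form a neighborhood basis on which higher cohomology of the relevant period sheaves vanishes. A secondary difficulty is matching the Hodge filtration on the de Rham side with the $\xi$-adic filtration on $\mathbb{B}_{\dR}^+$; the correct identification of filtrations only emerges after careful bookkeeping through the geometric period sheaf, since the natural grading on $\mathcal{O}\mathbb{B}_{\dR}^+$ and the Hodge filtration do not line up without a twist.
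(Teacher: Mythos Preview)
The paper does not prove this theorem; it is stated with a citation to Faltings \cite{faltings1989crystalline} and used as a black box. There is no argument in the paper to compare your proposal against.

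Your outline of Scholze's perfectoid approach is a correct sketch of one modern proof of the de Rham comparison, and it meshes well with the $\Ainf$-cohomology machinery cited elsewhere in the paper. One point to watch: the hypothesis as recorded here is only ``smooth and separated,'' whereas the projection-formula step you describe (identifying pro-\'etale hypercohomology of $\mathcal{O}\mathbb{B}_{\dR}^+ \otimes \Omega^\bullet$ with $R\Gamma_{\dR}(X/K) \otimes_K \BdR^+$) relies on properness to ensure finiteness and to push the period ring outside the cohomology. If you genuinely want the separated non-proper case you would need a compactification argument or an appeal to the more recent results that extend the comparison beyond the proper setting; otherwise you should note that the paper only ever applies the theorem to smooth projective $X$, where properness is available.
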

In the case of abelian varieties over $\ol{\Q}$, Blasius proves that absolute Hodge classes $s \mapsto (s_{\A_f}, s_{\dR})$ are compatible under Faltings' isomorphism as well. 
\begin{theorem}[\cite{blasius1994padic}]
    Let $X$ be an abelian variety defined over $\ol{\Q}$. Fix a prime $p$ and an embedding $v: \ol{\Q} \to C$, and isomorphism $C \cong \C$, and $\tilde v: \ol{\Q}\to \C$.
    Let $s \in (H^1(X(\C), \Q))^\otimes$ be a Hodge class with respect to $\tilde{v}$ (by Deligne, also an absolute Hodge class). Under the Betti-\'etale and Betti-de Rham comparison isomorphisms, suppose that $s$ corresponds to the cocycles $s_{p} \in (H^1_{\et}(X,\Qp))^\otimes$ and $s_{\dR} \in (H^1_{\dR}(X/F'))^\otimes$, where $F'$ is some number field containing the field of definition for $X$ (cf.~\ref{subsubsec:dRalgebraic}). Then under the comparison isomorphism of Faltings in \Cref{thm:faltingsetaledR} applied to $X_{F'_v}$, the cocycle $s_{p}$ is sent to $s_{\dR}$. 
\end{theorem}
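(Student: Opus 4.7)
The plan is to follow Blasius's strategy, which proceeds by embedding $X$ in a family of abelian varieties and using a deformation argument to reduce to the CM case, where the three realizations are all explicitly computable.

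First I would set up the reduction. By Deligne's theorem, the Hodge class $s$ is absolute Hodge, so there is a number field $F'/F$ containing the field of definition of $X$ such that $s_{\dR} \in (H^1_{\dR}(X_{F'}/F'))^\otimes$ and $s_p \in (H^1_{\et}(X_{\ol{F'}},\Qp))^\otimes$ correspond under every Betti comparison. One then embeds $X$ (with level structure and polarization) into a polarized family $\mathcal{X} \to S$ of abelian varieties over a smooth, geometrically connected $F'$-variety $S$, choosing $S$ either as a Shimura variety of Hodge type or as a suitable moduli space $\mathcal{A}_{g,N}$. Up to shrinking, the class $s$ spreads to a horizontal absolute Hodge section $\mathbf{s}$ of the associated variation, giving sections $\mathbf{s}_{\dR}$ of the relative de Rham local system and $\mathbf{s}_p$ of the relative $p$-adic \'etale sheaf.

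Next I would develop the required relative $p$-adic Hodge theory over $S$: the relative Faltings isomorphism
\[
    R^1\pi_{\et,*}\Qp \otimes_{\Qp} \mathcal{B}_{\dR,S} \cong R^1\pi_{\dR,*} \otimes_{\mathcal{O}_S} \mathcal{B}_{\dR,S}
\]
for $\pi: \mathcal{X} \to S$ in a rigid-analytic neighborhood of each classical point of $S$. The goal is a $p$-adic Principle B: the condition ``$\mathbf{s}_{p,t}$ is sent to $\mathbf{s}_{\dR,t}$ under Faltings' isomorphism at $t$'' defines both an open and a closed subset of $S(\ol{F'})$, hence is determined by the connected component. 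Openness follows because both sides are horizontal sections of the same de Rham local system over a rigid disc, so coincidence at one point of the disc forces coincidence on the disc. Closedness follows from rigidity of the relevant comparison together with continuity.

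Finally, I would reduce to the case where $X$ is a CM abelian variety. Connected components of $\mathcal{A}_{g,N}$ and of Shimura varieties of Hodge type contain CM points (the CM locus is Zariski dense), so by the previous paragraph it suffices to verify compatibility for $X$ a CM abelian variety defined over a number field. In the CM case, the Mumford--Tate group is a torus, and all three realizations $H^1_B$, $H^1_{\dR}$, and $H^1_{\et}$ split as direct sums of one-dimensional pieces indexed by the CM type, on which the comparison maps can be written explicitly in terms of Hecke characters and CM periods. The compatibility of $s_p$ with $s_{\dR}$ under Faltings' comparison then reduces to an identity of local CM periods that can be checked, for instance, via Shimura--Taniyama and the construction of CM abelian varieties over $p$-adic local fields.

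The main obstacle is the Principle B step: one must set up the relative Faltings comparison precisely enough to make sense of ``$\mathbf{s}_p$ corresponds to $\mathbf{s}_{\dR}$'' in families, and then establish that this property propagates along connected components. The CM computation and Deligne's input are essentially input; it is the relative/rigid-analytic $p$-adic Hodge theory underlying the spreading argument that requires the bulk of the work.
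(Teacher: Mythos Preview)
The paper does not prove this theorem; it is quoted as a result of Blasius with the citation \cite{blasius1994padic} and no argument is given. There is therefore nothing in the paper to compare your proposal against.

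For what it is worth, your sketch does track the shape of Blasius's actual argument: spread the absolute Hodge class over a moduli space of abelian varieties, use a $p$-adic Principle~B to show the compatibility locus is open and closed, and reduce to CM points where the comparison can be checked by hand. That is the right outline. The delicate point you correctly flag is the relative $p$-adic Hodge theory needed to make the Principle~B step precise; in Blasius's paper this is handled via crystalline cohomology and the theory of one-motives rather than directly via a rigid-analytic relative $\BdR$-comparison, so if you were to carry this out you would want to look at how Blasius (and later Ogus, and Blasius's subsequent work) actually formalizes the horizontality and propagation of the compatibility condition.
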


We formulate the analogous conjecture for arbitrary smooth projective varieties $X$ defined over a number field.

\begin{conj}[Hodge classes compatible with $D_{\dR}$]\label{conj:blasius}
    Let $X$ be a smooth projective variety over a number field $F$.
    Let $(s_{\A_f}, s_{\dR}) \in (H^w_{\et}(X_{\ol{F}},\A_f) \times H^w_{\dR}(X/F'))^\otimes$ be an absolute Hodge class. 
    Then the corresponding $p$-adic \'etale cohomology class $s_{p}\in H^w_{\et}(X_{\ol{F}},\Qp)$ maps to $s_{\dR}$ under Faltings' isomorphism in \Cref{thm:faltingsetaledR}. 
\end{conj}

Various important properties of the Hodge--Tate cocharacter follow from \Cref{conj:blasius}, if we know that the $\{s_\alpha\}$ defined in \ref{subsubsec:dRalgebraic} are absolute Hodge classes.

Once we have assumed Conjectures \ref{conj:blasius} and \ref{conj:absolutehodge}, the proofs of \cite[Proposition 2.4.4]{kisinmocznorthcott} also apply in our more general setting. We state the analogous properties here.

\begin{prop}[Properties following from the conjectures, \cite{kisinmocznorthcott}]
    Suppose we have $X$ and $\{s_\alpha\}$ as above. For two $G_{MT}$-valued cocharacters $\mu_1$ and $\mu_2$, we write $\mu_1 \sim \mu_2$ if the two cocharacters are conjugate over some field containing the fields of definition of $\mu_1$ and $\mu_2$. 
    \begin{enumerate}[(1)]
    \item \Cref{conj:absolutehodge} implies that, up to replacing $F$ with a finite extension independent of the choice of prime $p$, the Galois representation $\rho: \Gal_F \to \GL(H^w_{\et}(X_{\ol{F}},\Qp))$ is contained in $G_{MT}(\Qp)$, where we view $(G_{MT})_{\Qp} \subset \GL(H^w_{\et}(X_{\ol{F}},\Qp))$ as the stabilizer of the $\{s_{\alpha,p}\}$. 
    \item \Cref{conj:absolutehodge} and \Cref{conj:blasius} imply the following.
    \begin{enumerate}
        \item The Hodge--Tate cocharacter $\bm \mu_v$ is $G_{MT}$-valued.
        \item For $\sigma \in \Aut(C)$, $\bm \mu_{\sigma \circ v} \sim \sigma^*(\bm \mu_v)$.
        \item Let $c \in \Aut(\C) \cong \Aut(C)$ denote complex conjugation. Then $\bm \mu_{c \circ v} \cdot \bm \mu_v \sim \bm w^{-1}$, where $\bm w^{-1}$ is defined in (\ref{eqn:hodgecocharsymmetry}). 
        \item The images of the $G_{MT}(C)$-conjugates of the cocharacters $\bm \mu_v$, for all embeddings $v: \ol{F} \to C$, generate the group $G_{MT}$. \td{check this. Also don't we need adelic MT assumption? To say its dense }
    \end{enumerate}
\end{enumerate}
\end{prop}

\td{!!! Also say the crucial statement, which is about the Galois conjugates of the HT cocharacter generating the MT group if you assume the conjectures. }

\nts{Be explicit about what follows from \Cref{conj:absolutehodge} and \Cref{conj:blasius}. 

- $\sigma^* \bm \mu_v \sim \bm \mu_{\sigma \circ v}$

- the cocharacter $\bm \mu_v$ is $G_{MT}$-valued because the Blasius result allows you to say that the $s_{\alpha,p}$ live in the 0th graded piece always, since that's where the $s_{\alpha,\dR}$ live. 

- Then, since the HT cochar is $G$-valued and so is the Hodge cochar, they are conjugate by $G$. 

- This allows us to get the relation we want between conjugate embeddings and HT cochars.

Following from the absolute Hodge conjecture:

- the galois action lands in the MT group (see 2.4.1, still don't quite understand)

- we wanted the Hodge classes defining $G_{MT}$ to be absolute Hodge classes, to apply Blasius' result?? To use faltings, probably need a finite extension.
}

\td{Check and add corollary 2.4.7}

\td{CHAR p VERSION OF HT COCHARACTER}

\section{Main result: fixed prime case}
\label{section:case1}
For the rest of this section, fix a pure Koshikawa motive $M = ({\bf M},\hat{T})$ defined over a number field $F$. Fix a prime $p$. We will write $T = T_p$ for the $\Zp$-module quotient of $\hat{T}$.

The goal of this section is to prove the following result. 

\begin{theorem}\label{thm:fixedprimecase}
    Fix $c > 0$. Suppose that for a fixed prime $p$, there are infinitely many distinct $p$-power isogenies $M' \in \mscr{I}(M)_{\ol{F}}$ such that $h(M')< c$. Then the values $h(M')$ for all such $M'$ form a finite set. 
\end{theorem}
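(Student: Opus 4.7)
The plan is to derive a contradiction from the assumption that $\{h(M'_i)\}$ is an infinite set of distinct values, all bounded above by $c$. The main tool is the isogeny formula \Cref{prop:isogenyformulaslope}, namely $h(M'_i) - h(M) = \log\#(T/T'_i)\bigl(\tfrac{w}{2} - \nu(T, T'_i)\bigr)$, combined with the forthcoming fixed-prime slope bound in \S \ref{subsec:fixedprimeheightbound}. Since $T$ has only finitely many sublattices of any fixed index, an infinite family of distinct $T'_i$ with bounded heights forces $\#(T/T'_i) \to \infty$, and therefore $\nu(T, T'_i) \to w/2$ (otherwise the isogeny formula would send the heights to infinity).

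First I would extract a structured subsequence. For each $n$, $T'_i \bmod p^n T$ takes values in the finite set of submodules of $T/p^n T$, so a diagonal argument produces a subsequence along which the mod-$p^n$ reduction stabilizes for every $n$. The inverse limit of these reductions defines a saturated $\Zp$-submodule $L \subset T$ of strictly smaller rank than $T$, and a further refinement arranges $T'_i = L + p^{n_i} T$ with $n_i \to \infty$, as foreshadowed in the introduction.

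Next I would invoke the forthcoming Proposition in \S \ref{subsec:fixedprimeheightbound}, which asserts that $\lim_{i\to\infty} \nu(T, L + p^{n_i} T) \leq w/2$, with equality forcing $L$ to be stable under $\Gal(\ol{F}/F')$ for some finite extension $F'/F$. If this limit is strictly less than $w/2$, fix $\epsilon > 0$ with $\tfrac{w}{2} - \nu(T, T'_i) \geq \epsilon$ for $i \gg 0$; the isogeny formula then gives $h(M'_i) \geq h(M) + \epsilon \log\#(T/T'_i) \to \infty$, contradicting $h(M'_i) < c$. In the equality case, each $T'_i = L + p^{n_i} T$ inherits $\Gal_{F'}$-stability from $L$, so every $M'_i$ is defined over the single number field $F'$; Koshikawa's finiteness theorem \cite[Theorem 9.8]{koshikawa2015heights} applied over $F'$ then forces $\{h(M'_i)\}$ to be a finite set, contradicting the assumed infinitude of distinct values.

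The hard part is therefore the asymptotic slope bound in \S \ref{subsec:fixedprimeheightbound} itself. I expect its proof to require constructing a BKF module $\mf M(L)$ compatible with the inclusions so that $\mf M(L + p^{n_i} T) = \mf M(L) + p^{n_i}\mf M(T)$, then bounding $\lim_i \mu\bigl(\mf M(T)/\mf M(\sigma L) \bmod{p^i}\bigr)$ in terms of the Frobenius cokernel $\mf M(L)/\vphi^*\mf M(L) \otimes_{\Ainf} \BdR^+$, which encodes the position of $L \otimes_{\Zp} C$ inside the Hodge--Tate decomposition of $T \otimes_{\Zp} C$. The adelic Mumford--Tate hypothesis should then let one replace the $\Gal_F$-integral in \eqref{eqn:nuGaloisorbitsintro} by one over $G_{MT}(\Zp)$, and the Hodge symmetry $\bm \mu_{c\circ v}\cdot \bm \mu_v \sim \bm w^{-1}$ (coming from Conjectures \ref{conj:blasius} and \ref{conj:absolutehodge}) will allow pairing of conjugate places $v, c\circ v \mid p$ to produce the required weight-$w$ bound.
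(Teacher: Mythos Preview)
Your proposal is correct and follows essentially the same approach as the paper. You correctly identify the reduction to sublattices of the form $T'_i = L + p^{n_i}T$ (which the paper states in its introduction but leaves as a TODO in \S\ref{section:case1}), the dichotomy based on whether $\lim_i \nu(T,T'_i)$ equals or is strictly less than $w/2$, the appeal to Koshikawa's finiteness theorem in the equality case, and the structure of the slope-bound argument in \S\ref{subsec:fixedprimeheightbound} (construction of $\mf M(L)$, passage from the $\Gal_F$-integral to a $G_{MT}(\Zp)$-integral, and the Hodge symmetry pairing of conjugate places).
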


\td{First reduce to the case of $L + p^nT$.}
Fix a saturated $\Zp$-submodule $L \subset T$, that is, a submodule such that $T/L$ is torsion-free. For $n \ge 0$, define
$$T_n':= L + p^nT.$$
Define the $\hat{\Z}$-sublattice $\hat{T}'_n \subset \hat{T}$ to be equal to $\hat{T}$ at all $\ell$-adic factors for $\ell \neq p$, and to equal $T_n'$ at the $p$-adic factor.

\subsection{Construction of $\ML$}\label{ss:constructionML}
\begin{defn}\label{defn:latticepairsfinalobject}
    Let $\mcC$ be the category of pairs $(T,\Xi)$, where $T$ is a finite free $\Zp$-module and $\Xi \subset T \otimes_{\Zp} \BdR $ is a $\BdR^+$-lattice. 
    A morphism $f:(T',\Xi') \to (T,\Xi)$ in $\mcC$ is given by a map $T' \to T$ of $\Zp$-modules which, after base-changing to $\BdR$, restricts to a $\BdR^+$-module homomorphism $\Xi' \to \Xi$. 
    Fix an object $(T,\Xi)$ in $\mcC$, and fix a $\Zp$-submodule $L \subset T$ such that $T/L$ is $p$-torsion free. 
\end{defn}

\subsubsection{}
Define the category $\mcC_{L,T}$ as the category of pairs $((T',\Xi'), f: (T',\Xi') \to (T,\Xi))$ where $(T',\Xi')$ is an object of $\mcC$, and $f$ is a morphism in $\mcC$ such that the image of $T'$ inside $T$ is contained inside $L$. Morphisms in $\mcC_L$ are defined as morphisms $(T',\Xi') \to (T'',\Xi'')$ which commute with the given maps to $(T,\Xi)$. From the definition of $\mcC_{L,T}$, it has a final object $((L,\Xi_L), i:(L,\Xi_L) \into (T,\Xi))$, where
\[
    \Xi_L := \Xi \cap (L \otimes_{\Zp}\BdR),
\]  
and the map $i$ is induced by inclusion $L \into T$. 



\begin{prop}\label{prop:MLisfinitefree}
	Let $(T,\Xi)$ be a choice of a finite free $\Zp$-module $T$ and a full rank $\BdR^+$-lattice $\Xi \subset T \otimes_{\Zp} \BdR$. By Fargues (\Cref{thm:farguesequivalence}), this is equivalent to the data of a finite free BKF module, which we denote $\MT$. Let $L \subset T$ be a $\Zp$-submodule such that $T/L$ is torsion-free. We can define a finite free BKF module $\ML$ such that $\ML$ corresponds to the final object $(L,\Xi_L)$ in $\mcC_{L,T}$ as in \Cref{defn:latticepairsfinalobject}, where $\Xi_L:= \Xi \cap L \otimes_{\Zp} \BdR$.
\end{prop}


\begin{proof}
	WLOG we may assume that $\MT$ is effective, i.e.~the image of $\vphi^* \MT$ is in $\MT$. 
    
    Define
    \begin{align}\label{eqn:defML}
        \ML := \MT \cap (L \otimes_{\Zp}\Ainf[1/\mu]).
    \end{align}
    If we show $\ML$ is a finite free $\Ainf$-module and stable under the Frobenius induced by $\vphi_{\MT}$, then it is indeed a finite free BKF module. Moreover, the relation in \Cref{rmk:XiMrelation} shows that $\ML$ must be the object corresponding to $(L,\Xi_L)$ under Fargues' equivalence. By \Cref{lemma:MLstableunderFrob} and $\MT$ being effective, we know that $\ML$ is $\vphi$-stable. It remains to show that $\ML$ is finite free. 
    

    We claim that it suffices to show that $\mf M(L)$ defines a vector bundle after restricting to $U:= \Spec \Ainf \bs \{\mf m\}$, where $\mf m$ is the unique maximal ideal of $\Ainf$. Let $\iota: U \to \Spec \Ainf$ denote the inclusion. By \cite[Lemma 14.2.3]{scholzeweinstein2020berkeley}, if we can show that $\mf M(L)|_U$ is a vector bundle on $U$, then $\iota_*(\mf M(L)|_U)$ is in fact a finite free $\Ainf$-module. There is a natural map $\mf M(L) \to \iota_*(\mf M(L)|_U)$ which commutes with $\mf M(T) \to \iota_* (\mf M(T)|_U)$. This latter map is the identity map since $\mf M(T)$ is already finite free. Of course, the map $\mf M(L) \to \iota_*(\mf M(L)|_U)$ is an isomorphism over $U$, so the kernel and cokernel are $\Ainf$-modules supported at the closed point $\mf m$. Since $\mf M(L)$ is torsion-free, the map is injective. If the cokernel is nonzero and supported at $\mf m$, then there exists a nonzero element $x \in \iota_*(\mf M(L)|_U)$ such that $\mu x \in \mf M(L) \subset L \otimes \Ainf[1/\mu]$. However, $x$ is also in $\iota_*(\mf M(T)|_U) = \mf M(T)$, so this implies that $x \in \mf M(T) \cap (L \otimes \Ainf[1/\mu])= \mf M(L)$. Therefore $\mf M(L)$ is itself a finite free $\Ainf$-module.


	To show that $\mf M(L)|_U$ is a vector bundle, we use the open cover $\Spec \Ainf[1/\mu] \cup \Spec \ainf[1/p] = \Spec \Ainf \bs \{ \mf m\}$. By \Cref{lemma:BMS4.26}, we will then have that $\ML$ restricted to $\Spec \Ainf[1/\mu]$ is finite free already, since it becomes $L \otimes_{\Zp} \Ainf[1/\mu]$. 

	Now we treat the case of $\ML[1/p]$.
	Let $i \ge 0$. By Lemma \ref{lemma:MLlocalisomorphism}, $\vphi$ on $\MT$ induces an isomorphism on the $\vphi^{-i}(\xi)$-adic completions
	\[
		(\vphi^* \ML[p^{-1}])^{\wedge}_{(\vphi^{-i}(\xi))} \xto{\sim} \ML[p^{-1}]^{\wedge}_{(\vphi^{-i}(\xi))}.
	\]
	In particular, this identifies the completed stalk of $\ML[1/p]$ at $(\vphi^{-i-1}(\xi))$ with the completed stalk of $\ML[1/p]$ at $(\vphi^{-i}(\xi))$. 
    By \Cref{lemma:MLonegoodstalk}, we know that the completion $\ML[1/p]^{\wedge}_{(\xi)}$ is a free $\Ainf[1/p]^{\wedge}_{(\xi)} = \BdR^+$-module of rank equal to $\rk_{\Zp}L$,
    and therefore the same is true for $\ML[1/p]^{\wedge}_{(\vphi^{-i}(\xi))}$ for all $i\ge 1$.

	By \cite[Lemma 3.23]{BMS2018integral}, we know that $V((\mu)) = \bigcup_{i \ge 0} V(\vphi^{-i}(\xi)) \subset \Spec \Ainf[1/p]$, and $(\vphi^{-i}(\xi))$ is a maximal ideal for all $i$. Moreover, we have that for all $i \ge 0$,
    $$
        \Spec \Ainf\left[ \frac{1}{p\vphi^{-i-1}(\mu)} \right] = \Spec \Ainf \left[ \frac{1}{p\vphi^{-i}(\mu)} \right] \cup \{ (\vphi^{-i}(\xi)) \}
    $$
    inside of $\Spec \Ainf[1/p]$. Therefore, by applying Beauville-Laszlo repeatedly, we can combine the fact that $\ML[1/p\mu]$ is free of rank equal to $\rk_{\Zp}(L)$, and the completions $\ML[1/p]^{\wedge}_{(\vphi^{-i}(\xi))}$ are free of the same rank for all $i \ge 0$, to conclude that $\ML[1/p]$ defines a vector bundle on $\Spec \Ainf[1/p]$. 
\end{proof}

\begin{lemma}\label{lemma:MLonegoodstalk}
    Using $(T,\Xi)$, $\MT$, $L$, and $\ML$ as defined in \Cref{prop:MLisfinitefree}, we have that $\ML[1/p]^{\wedge}$ is a finite free $\BdR^+$-module of rank equal to $\rk_{\Zp}L$, where the ${}^\wedge$ denotes $\xi$-adic completion.
\end{lemma}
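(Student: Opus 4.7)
The plan is to identify $\mf M(L)[1/p]^{\wedge}_{(\xi)}$ with the $\BdR^+$-submodule $\Xi_L := \Xi \cap (L \otimes_{\Zp} \BdR)$ of $T \otimes_{\Zp} \BdR$, which is the natural candidate since $(L, \Xi_L)$ is the final object of $\mcC_{L,T}$. First I would verify that $\Xi_L$ is a $\BdR^+$-lattice in $L \otimes_{\Zp} \BdR$: being a submodule of the finitely generated $\Xi$ over the Noetherian DVR $\BdR^+$, it is finitely generated and torsion-free, hence free; and since any $l \in L \otimes_{\Zp} \BdR$ satisfies $\xi^N l \in \Xi_L$ for $N$ large, it spans $L \otimes_{\Zp} \BdR$ over $\BdR$, so its rank is exactly $\rk_{\Zp} L$.

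Next, I apply Fargues' equivalence (\Cref{thm:farguesequivalence}) to the pair $(L, \Xi_L)$ to produce a finite free BKF module $\mf N$. The inclusion $(L, \Xi_L) \hookrightarrow (T, \Xi)$ in $\mcC_{L,T}$ is functorially carried to an inclusion $\mf N \hookrightarrow \mf M(T)$ of BKF modules whose cokernel is the finite free BKF module corresponding to the saturated quotient $(T/L, \Xi/\Xi_L)$. In particular, $\mf M(T)/\mf N$ is a finite free $\Ainf$-module, and therefore $\mu$-torsion-free.

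The crux is the identification $\mf N = \mf M(L)$ as submodules of $\mf M(T)$. For $\mf N \subseteq \mf M(L)$, applying \Cref{lemma:BMS4.26} to $\mf N$ gives $\mf N \otimes_{\Ainf} \Ainf[1/\mu] = L \otimes_{\Zp} \Ainf[1/\mu]$, so $\mf N \subseteq \mf M(T) \cap (L \otimes_{\Zp} \Ainf[1/\mu]) = \mf M(L)$. For the reverse inclusion, take $x \in \mf M(L)$: since $x \in L \otimes_{\Zp} \Ainf[1/\mu]$, the image of $x$ in $(\mf M(T)/\mf N) \otimes_{\Ainf} \Ainf[1/\mu] = (T/L) \otimes_{\Zp} \Ainf[1/\mu]$ vanishes, and then $\mu$-torsion-freeness of $\mf M(T)/\mf N$ forces the image of $x$ in $\mf M(T)/\mf N$ itself to be zero, so $x \in \mf N$.

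Once $\mf M(L) = \mf N$ is established, the lemma follows from Fargues' explicit description of $\mf N$: $\mf M(L)[1/p]^{\wedge}_{(\xi)} = \mf N \otimes_{\Ainf} \BdR^+ = \Xi_L$, which is free of rank $\rk_{\Zp} L$ by the first step. I expect the main obstacle to be the reverse inclusion $\mf M(L) \subseteq \mf N$: it does not seem to follow from direct $\xi$-adic approximation of elements of $\Xi_L$ by elements of $\mf M(L)[1/p]$, and instead relies on the structural input from Fargues that $\mf M(T)/\mf N$ is a genuine finite free $\Ainf$-module rather than merely a quotient of such, which is what makes $\mu$-torsion-freeness available.
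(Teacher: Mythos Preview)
Your strategy is a genuine alternative to the paper's, and if it worked it would prove the stronger statement $\ML=\mf N$ (i.e., \Cref{prop:MLisfinitefree} itself) in one stroke, with \Cref{lemma:MLonegoodstalk} as an immediate corollary. The paper goes the other way: it proves \Cref{lemma:MLonegoodstalk} by a direct completion argument that does \emph{not} assume $\ML$ is free, and then uses it as one of the inputs to \Cref{prop:MLisfinitefree}.

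The gap is exactly where you suspect it. Fargues' theorem, as stated, is only an equivalence of categories; it guarantees that the \emph{categorical} cokernel of $\mf N\hookrightarrow\MT$ in $\Mod^{\vphi}_{/\Ainf}$ corresponds to $(T/L,\Xi/\Xi_L)$, but this category is not abelian and its categorical cokernels need not agree with module-theoretic quotients. Worse, the specific fact you need is circular: if $x\in\MT$ satisfies $\mu x\in\mf N$, then $\mu x\in L\otimes\Ainf[1/\mu]$, so $x\in L\otimes\Ainf[1/\mu]$, so $x\in\ML$; conversely any $x\in\ML$ has $\mu^k x\in\mf N$ for some $k$. Hence the $\mu$-torsion of $\MT/\mf N$ is precisely $\ML/\mf N$, and ``$\MT/\mf N$ is $\mu$-torsion-free'' is literally the statement $\mf N=\ML$ that you are trying to prove. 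Invoking ``functoriality'' or ``structural input from Fargues'' does not break this circle; you would need to unpack the proof of the equivalence (e.g.\ through short exact sequences of vector bundles on the Fargues--Fontaine curve) to get an independent exactness statement, and that is real additional work not supplied here.

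The paper sidesteps all of this. It uses flatness of $\Ainf[1/p]\to\BdR^+$ to compute $\ML[1/p]\otimes_{\Ainf[1/p]}\BdR^+$ directly as $\Xi_L$, and then checks by hand that $\ML[1/p]/\xi^n\hookrightarrow\MT[1/p]/\xi^n$ for each $n$, which forces $\ML[1/p]^{\wedge}\hookrightarrow\MT[1/p]^{\wedge}\cong\Xi$ and hence identifies $\ML[1/p]^{\wedge}$ with $\Xi_L$. This argument only uses the \emph{definition} of $\ML$ as an intersection, not any freeness.
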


\begin{proof}
    Consider the following commutative diagram, where the two horizontal maps are the natural ones, and the vertical maps are induced by the inclusion $\ML \into\MT$. 
    \[
    \begin{tikzcd}
        \ML[1/p] \otimes_{\Ainf[1/p]} \BdR^+ \arrow[r] \arrow[d] & \ML[1/p]^{\wedge} \arrow[d]
        \\
        \MT[1/p] \otimes_{\Ainf[1/p]} \BdR^+ \arrow[r,"\sim"] & \MT[1/p]^{\wedge} 
    \end{tikzcd}
    \]
    Note that the bottom horizontal map is an isomorphism because $\MT[1/p]$ is a finite free $\Ainf[1/p]$-module\footnote{This allows us to commute the tensor product with finite intersection.}, and the left map is an injection. By the definition of $\ML$ and the fact that $\Ainf \to \BdR^+$ is a flat map, we can check that $\ML[1/p] \otimes_{\Ainf[1/p]} \BdR^+ = \Xi \cap L \otimes_{\Zp} \BdR^+$, which is a finite free $\BdR^+$-module of the correct rank.  
    
    Therefore, to show that $\ML[1/p]^\wedge$ is finite free of the correct rank, it now suffices to show that the right vertical map is injective. Indeed, the map
    \[
        \frac{\ML[1/p]}{\xi^n \ML [1/p]} \to \frac{\MT[1/p]}{\xi^n \MT [1/p]}
    \]
    is injective for all $n \ge 1$, so it's injective in the limit. 
\end{proof}

\begin{lemma}\label{lemma:MLstableunderFrob}
	If $\vphi(\MT)\subset \MT$, then $\vphi(\ML )\subset \ML$ as well.  
\end{lemma}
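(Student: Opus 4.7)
The plan is to exploit the defining identity $\ML = \MT \cap \bigl(L \otimes_{\Zp} \Ainf[1/\mu]\bigr)$ from \eqref{eqn:defML}, with the intersection taken inside $\MT[1/\mu]$. By \Cref{lemma:BMS4.26}, there is a Frobenius-equivariant identification $\MT \otimes_{\Ainf} \Ainf[1/\mu] \cong T \otimes_{\Zp} \Ainf[1/\mu]$, under which the $\vphi$-action on the right-hand side is $\mathrm{id}_T \otimes \vphi_{\Ainf}$, since $T$ sits inside $\MT \otimes W(C^\tilt)$ as the $\vphi$-fixed part. For $x \in \ML$, the hypothesis $\vphi(\MT) \subset \MT$ immediately gives $\vphi(x) \in \MT$, so the lemma reduces to verifying that $\vphi(x) \in L \otimes_{\Zp} \Ainf[1/\mu]$.

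To analyze this, write $x = \sum_i \ell_i \otimes a_i$ with $\ell_i \in L$ and $a_i \in \Ainf[1/\mu]$, so that $\vphi(x) = \sum_i \ell_i \otimes \vphi(a_i)$. Since $\vphi(\mu) = \mu \cdot u$ with $u := \sum_{k=0}^{p-1}(1+\mu)^k$ a non-zero-divisor in $\Ainf$ (because both $\mu$ and $\vphi(\mu) = \mu u$ are non-zero-divisors), this expression lies \emph{a priori} only in the flat localization $L \otimes_{\Zp} \Ainf[1/\vphi(\mu)] = L \otimes_{\Zp} \Ainf[1/\mu][1/u]$ rather than in $L \otimes_{\Zp} \Ainf[1/\mu]$ itself. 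The main (and essentially only) obstacle is this descent from $\Ainf[1/\vphi(\mu)]$ back down to $\Ainf[1/\mu]$, which reflects the fact that $\vphi$ does not preserve $\Ainf[1/\mu]$ on the nose.

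To handle the descent, I would use that $T/L$ is torsion-free over $\Zp$, so $L$ is a $\Zp$-direct summand of $T$; fix a splitting $T = L \oplus L'$. Direct sums commute with $\otimes_{\Zp} R$, so every element of $T \otimes_{\Zp} R$ admits a compatible $L \oplus L'$-decomposition for $R = \Ainf[1/\mu]$ and for $R = \Ainf[1/\vphi(\mu)]$. Because $\vphi(x) \in \MT \subset T \otimes_{\Zp} \Ainf[1/\mu]$, both of its components already live in $T \otimes_{\Zp} \Ainf[1/\mu]$, and its $L'$-component vanishes after base change to $L' \otimes_{\Zp} \Ainf[1/\vphi(\mu)]$; injectivity of the localization $L' \otimes_{\Zp} \Ainf[1/\mu] \hookrightarrow L' \otimes_{\Zp} \Ainf[1/\vphi(\mu)]$ then forces that component to vanish already in $L' \otimes_{\Zp} \Ainf[1/\mu]$. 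Hence $\vphi(x) \in L \otimes_{\Zp} \Ainf[1/\mu]$, and combined with $\vphi(x) \in \MT$ we conclude $\vphi(x) \in \ML$.
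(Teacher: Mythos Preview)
Your proof is correct and follows essentially the same approach as the paper's. Both arguments first observe $\vphi(x)\in\MT$ and $\vphi(x)\in L\otimes_{\Zp}\Ainf[1/\vphi(\mu)]$, and then descend to $L\otimes_{\Zp}\Ainf[1/\mu]$; the paper packages this descent as the equality $(L\otimes\Ainf[1/\vphi^j(\mu)]_{j\ge 0})\cap(T\otimes\Ainf[1/\mu])=L\otimes\Ainf[1/\mu]$, while you unpack it explicitly via the splitting $T=L\oplus L'$ and injectivity of the localization, which is exactly what justifies that equality.
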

\begin{proof}
	\begin{align}
		\vphi(\ML) &\subset \MT \cap (L \otimes \Ainf[1/\vphi^j(\mu)]_{j \ge 0}) \\
		&=\MT \cap (L \otimes \Ainf[1/\vphi^j(\mu)]_{j \ge 0}) \cap (T \otimes \Ainf[1/\mu]) 
		\\
		&= \MT \cap \left( (L \otimes \Ainf[1/\vphi^j(\mu)]_{j \ge 0}) \cap (T \otimes \Ainf[1/\mu]) \right)\\
		&= \MT \cap (L \otimes \Ainf[1/\mu]) =: \ML  \label{eqn:MLnaivedef}
	\end{align}
    The first equality follows from $\MT \otimes \Ainf[1/\mu] \cong T \otimes \Ainf[1/\mu]$. 
\end{proof}

\begin{lemma} \label{lemma:MTMLtorsionfree}
    $\MT/\ML$ is $\vphi(\xi)$-torsion free.
\end{lemma}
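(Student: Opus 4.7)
The plan is to unpack the definition $\ML = \MT \cap (L \otimes_{\Zp} \Ainf[1/\mu])$ from \eqref{eqn:defML} and reduce the $\vphi(\xi)$-torsion-freeness of $\MT/\ML$ to a statement about the simpler module $(T/L) \otimes_{\Zp} \Ainf[1/\mu]$. More concretely, I would take an element $x \in \MT$ with $\vphi(\xi) x \in \ML$ and argue directly that $x$ itself must lie in $\ML$. Since $x$ is already in $\MT$, the only thing to check is that $x \in L \otimes_{\Zp} \Ainf[1/\mu]$.

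The first step is to use \Cref{lemma:BMS4.26}, which identifies $\MT \otimes_{\Ainf} \Ainf[1/\mu]$ with $T \otimes_{\Zp} \Ainf[1/\mu]$. This places $x$ in $T \otimes_{\Zp} \Ainf[1/\mu]$, so it makes sense to look at its image $\bar x$ in the quotient $(T/L) \otimes_{\Zp} \Ainf[1/\mu]$. The hypothesis $\vphi(\xi) x \in \ML \subset L \otimes_{\Zp} \Ainf[1/\mu]$ translates to $\vphi(\xi) \cdot \bar x = 0$ in this quotient.

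Next I would note that $T/L$ is a finitely generated torsion-free, hence free, $\Zp$-module, so $(T/L) \otimes_{\Zp} \Ainf[1/\mu]$ is a free $\Ainf[1/\mu]$-module. The claim therefore reduces to showing that $\vphi(\xi)$ is a non-zero-divisor in $\Ainf[1/\mu]$. This is where the main (and only mild) subtlety lies, but it is really immediate: $\Ainf = W(\OC^\tilt)$ is a domain (as $\OC^\tilt$ is a perfect valuation ring), so any localization of it is also a domain; since $\vphi(\xi) \ne 0$, it is a non-zero-divisor. Thus $\bar x = 0$, i.e.\ $x \in L \otimes_{\Zp} \Ainf[1/\mu]$, and combining with $x \in \MT$ gives $x \in \ML$, as required.

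I do not anticipate any real obstacle: the only ingredient beyond the definition is the Lemma~\ref{lemma:BMS4.26} identification of $\MT[1/\mu]$ with $T \otimes \Ainf[1/\mu]$, which makes the quotient $(T/L) \otimes \Ainf[1/\mu]$ meaningful and lets the argument run through the freeness of $T/L$ and the integrality of $\Ainf$.
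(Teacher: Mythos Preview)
Your argument is correct and follows the same overall shape as the paper's proof: take $x\in\MT$ with $\vphi(\xi)x\in\ML$ and show directly that $x\in L\otimes_{\Zp}\Ainf[1/\mu]$. The only difference is in how this last membership is verified. The paper uses the explicit identity $1/\vphi(\xi)=\mu/\vphi(\mu)$ to place $x$ in $L\otimes\Ainf[1/\vphi^j(\mu)]_{j\ge 0}$, and then invokes the equality $\MT\cap(L\otimes\Ainf[1/\vphi^j(\mu)]_{j\ge 0})=\ML$ established inside the proof of \Cref{lemma:MLstableunderFrob}. Your route is more self-contained: you observe that $(T/L)\otimes_{\Zp}\Ainf[1/\mu]$ is free over the domain $\Ainf[1/\mu]$, so $\vphi(\xi)$ acts as a non-zero-divisor and $\bar x=0$ immediately. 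Both are short and valid; yours avoids the back-reference to another proof at the cost of appealing to $\Ainf$ being a domain, which is standard.
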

\begin{proof}
    Suppose that $x \in \MT$ such that $\vphi(\xi)x \in \ML$. Recall that $1/\vphi(\xi) = \mu/\vphi(\mu)$, so that
    $x \in \MT \cap (L \otimes_{\Zp} \Ainf[1/\vphi^j(\mu)]_{j \ge 0}) = \ML$, where the equality is shown in the proof of \Cref{lemma:MLstableunderFrob}. 
\end{proof}

\begin{lemma}\label{lemma:MLlocalisomorphism}
	For all $i \ge 0$ and $n \ge 1$, the map $\vphi$ on $\ML$ induces an isomorphism 
	\begin{align} \label{eqn:MLlocalisomorphism}
		\frac{\vphi^*\ML[p^{-1}]}{\vphi^{-i}(\xi)^n \vphi^*\ML[p^{-1}]} \xto{\vphi} \frac{\ML[p^{-1}]}{\vphi^{-i}(\xi)^n \ML[p^{-1}]}.
	\end{align}
\end{lemma}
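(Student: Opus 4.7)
The plan is to apply the snake lemma to the short exact sequence $0 \to \ML \to \MT \to N \to 0$, where $N := \MT/\ML$, and reduce the claim to injectivity of the induced Frobenius on $N$. The key observation will be that under the natural $\vphi$-equivariant embedding of $N$ into the ambient module $(T/L) \otimes_{\Zp} \Ainf[1/\mu]$, the linearized Frobenius is simply the identity (since Frobenius on the $\Zp$-lattice $T/L$ is trivial), giving injectivity for free.

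First, it will suffice to show that $\vphi_{\ML}[1/p]$ is an isomorphism after inverting $\tilde\xi := \vphi(\xi)$. Indeed, $(\tilde\xi)$ and $(\vphi^{-i}(\xi))$ for $i \ge 0$ are distinct maximal ideals of $\Ainf[1/p]$, so $\tilde\xi$ is a unit in $\Ainf[1/p]/\vphi^{-i}(\xi)^n$; hence an isomorphism over $\Ainf[1/p, 1/\tilde\xi]$ remains an isomorphism after tensoring with $\Ainf[1/p]/\vphi^{-i}(\xi)^n$.

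Next I will apply the snake lemma to the pair of short exact sequences
\[
0 \to \vphi^*\ML[1/p] \to \vphi^*\MT[1/p] \to \vphi^*N[1/p] \to 0
\]
(exact by flatness of $\vphi : \Ainf \to \Ainf$) and $0 \to \ML[1/p] \to \MT[1/p] \to N[1/p] \to 0$, connected by their Frobenius maps. Since $\vphi_{\MT}[1/p]$ is an isomorphism after inverting $\tilde\xi$ with cokernel killed by a power of $\tilde\xi$, and $\vphi^*\MT[1/p]$ is finite free (hence $\tilde\xi$-torsion free) over the domain $\Ainf[1/p]$, we have $\ker \vphi_{\MT} = 0$. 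The snake lemma then yields $\ker \vphi_{\ML} = 0$ together with an exact sequence
\[
0 \to \ker \vphi_N \to \operatorname{coker} \vphi_{\ML} \to \operatorname{coker} \vphi_{\MT};
\]
since $\operatorname{coker} \vphi_{\MT}$ is $\tilde\xi$-power torsion, the problem reduces to showing $\ker \vphi_N = 0$.

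To check this, I will use the fact (as in the proof of \Cref{lemma:MTMLtorsionfree}) that $N$ embeds $\vphi$-equivariantly into $M' := (T/L) \otimes_{\Zp} \Ainf[1/\mu]$, on which the Frobenius acts as $1 \otimes \vphi$. Using $\vphi(\mu) = \tilde\xi \mu$ together with the triviality of Frobenius on $T/L$, there are canonical identifications
\[
\vphi^*M' \;\cong\; (T/L) \otimes_{\Zp} \Ainf[1/\tilde\xi\mu] \;\cong\; M'[1/\tilde\xi]
\]
under which the linearized Frobenius $\vphi_{M'}$ becomes the identity map. Restricting to the submodule $\vphi^*N \hookrightarrow \vphi^*M'$, the map $\vphi_N$ is this inclusion followed by the factorization through $N$ given by \Cref{lemma:MLstableunderFrob}, and is therefore injective. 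The main subtlety will be tracking the $\vphi$-equivariance carefully through these identifications.
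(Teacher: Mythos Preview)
Your proof is correct and takes a genuinely different, more homological route than the paper's. The paper argues directly at each completed stalk: for surjectivity it lifts a given $x \in \ML[1/p]$ to $\vphi(\xi)^N x$ for large $N$, finds a preimage simultaneously in $\MT[1/p]$ and in $L \otimes \Ainf[1/p\vphi^{-1}(\mu)]$ (where the semilinear Frobenius is bijective), and observes these must agree inside $\MT[1/p\mu]$; for injectivity it reduces to the corresponding statement for $\MT$ via the inclusions of source and target into the $\MT$ versions and then localizes at $(\vphi^{-i}(\xi))$. Your approach instead proves the single global statement that $\vphi_{\ML}[1/p,1/\tilde\xi]$ is an isomorphism, via the snake lemma applied to $0 \to \ML \to \MT \to N \to 0$, reducing everything to the injectivity of the induced Frobenius on $N$, which you check by embedding $N$ into the ``trivial'' module $(T/L)\otimes \Ainf[1/\mu]$ where the linearized Frobenius is visibly an isomorphism onto $(T/L)\otimes \Ainf[1/\vphi(\mu)]$. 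Your argument is cleaner and yields all $i,n$ at once; the paper's is more elementary but handles surjectivity and injectivity separately at each stalk. One small remark: under your canonical identification $\vphi^*M' \cong M'[1/\tilde\xi]$ (which uses that $\vphi$ is an automorphism of $\Ainf$), the map $\vphi_{M'}$ \emph{is} that identification rather than the identity on a fixed module, so the phrasing ``becomes the identity'' is a slight abuse---but the conclusion that $\vphi_{M'}$ is injective, and hence so is $\vphi_N$, is exactly right.
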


\begin{proof}
    \td{TODO: check this proof}
	By Lemma \ref{lemma:MLstableunderFrob}, we have that the map is well-defined. 

	To show surjectivity, choose some element $x \in \ML[p^{-1}]$. 
	Viewing $x \in \MT[p^{-1}]$, since $\MT$ is a Breuil--Kisin--Fargues module, there exists some $N$ such that $\vphi(\xi) ^N x$ is in the image of $\vphi$, i.e.~there exists some $y \in \MT[p^{-1}]$ such that $\vphi(y)=\vphi(\xi)^N x$. 
    Moreover, since $\vphi|_{\MT}$ is injective, such a $y$ is unique given $N$ and $x$. 
	On the other hand, there also exists a unique $y' \in L \otimes \Ainf[1/p\vphi^{-1}(\mu)]$ such that $\vphi(y')=\vphi(\xi)^Nx$. 
    Here, the $\vphi$-semilinear map $\vphi: L \otimes_{\Zp} \Ainf[1/p\vphi^{-1}(\mu)] \to L \otimes_{\Zp} \Ainf[1/p\mu]$ is induced by the identity on $L$, and hence is immediately bijective.
	In particular, viewing both $ L \otimes \Ainf[1/p\vphi^{-1}(\mu)]$ and $\MT[p^{-1}]$ as inside $\MT[1/p\mu]$, we have $y = y'$. In particular,
	\[
	y' \in L \otimes \Ainf[1/p\vphi^{-1}(\mu)] \cap \MT \subset L \otimes \Ainf[1/p\mu] \cap \MT[1/p] =: \ML[1/p]
	\]
	where the inclusion is due to $1/\vphi^{-1}(\mu) = \xi/\mu$. In particular, we now have an element $y' \in \ML [1/p]$ such that $\vphi(y')=\vphi(\xi)^N x$. Once we consider the map modulo $\vphi^{-i}(\xi)^n$, the element $\vphi(\xi)^N$ becomes invertible, so we can conclude surjectivity of (\ref{eqn:MLlocalisomorphism}).

    The source and target are $\Ainf[p^{-1}]$-submodules of $\vphi^* \MT[p^{-1}]/\vphi^{-i}(\xi)^n$ and $\MT[p^{-1}]/\vphi^{-i}(\xi)^n$, respectively. Therefore, it suffices to show injectivity of
	\begin{align}
		\vphi: \vphi^* \MT[p^{-1}] /\vphi^{-i}(\xi)^n  \to \MT[p^{-1}]/\vphi^{-i}(\xi)^n .
	\end{align}
    Since these are maps of modules over the local ring $\Ainf[p^{-1}]/\vphi^{-i}(\xi)^n$, we can use the fact that this is also the base change of the localization $(\vphi^*\MT[p^{-1}])_{(\vphi^{-i}(\xi))} \xto{\vphi} ({\MT}[p^{-1}])_{(\vphi^{-i}(\xi))}$, which is an isomorphism since $\vphi(\xi)$ is a unit in $\Ainf[p^{-1}]_{(\vphi^{-i}(\xi))}$. 
	Take $x \in\vphi^* \MT[p^{-1}]$, and assume that $\vphi(x) = \vphi^{-i}(\xi)^ny$ for $y \in \MT[p^{-1}]$. 
    Then there exists $x' \in (\vphi^*\MT[p^{-1}])_{(\vphi^{-i}(\xi))}$ such that $\vphi(x')=y \in ({\MT}[p^{-1}])_{(\vphi^{-i}(\xi))}$, and therefore 
	\begin{align}
		x = \vphi^{-i}(\xi)^n x' \in (\vphi^*\MT[p^{-1}]) _{(\vphi^{-i}(\xi))} \label{eqn:xxprime}
	\end{align}
    We conclude that $x' \in \vphi^* \MT[p^{-1}]$ as well. 
\end{proof}

\subsection{Upper bound on limiting behavior of slope}

\begin{lemma}\label{lemma:LpnTmodulestructure}
    For $n \ge 0$, $\mf M(L + p^nT) = \mf M(L) + p^n \mf M(T)$.
\end{lemma}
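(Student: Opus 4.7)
The plan is to argue via Fargues' equivalence (\Cref{thm:farguesequivalence}). Interpret $\mf M(L + p^n T)$ as the finite free BKF module associated to the pair $(L + p^n T, \Xi)$, where we use $(L + p^n T) \otimes_{\Zp} \BdR = T \otimes_{\Zp} \BdR$ (since $L + p^n T$ and $T$ agree $\Qp$-rationally) to realize $\Xi$ as a $\BdR^+$-lattice in $(L+p^nT) \otimes \BdR$; equivalently, the intersection $\Xi \cap (L+p^nT)\otimes \BdR$ just recovers $\Xi$. The goal is then to show that $\mathcal{N} := \mf M(L) + p^n \mf M(T)$, viewed as an $\Ainf$-submodule of $\mf M(T)$, is itself a finite free BKF module whose associated pair under Fargues is also $(L + p^n T, \Xi)$. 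Uniqueness in the equivalence will then give $\mathcal{N} = \mf M(L + p^n T)$.

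First I would verify that $\mathcal{N}$ is a finite free BKF module by adapting the open-cover argument from the proof of \Cref{prop:MLisfinitefree}. On the cover $U = \Spec \Ainf \setminus \{\mf m\} = \Spec \Ainf[1/\mu] \cup \Spec \Ainf[1/p]$: by \Cref{lemma:BMS4.26},
\[
    \mathcal{N}[1/\mu] = L \otimes_{\Zp} \Ainf[1/\mu] + p^n T \otimes_{\Zp} \Ainf[1/\mu] = (L + p^n T) \otimes_{\Zp} \Ainf[1/\mu],
\]
which is finite free; and $\mathcal{N}[1/p] = \mf M(T)[1/p]$, because $p^n$ is invertible after inverting $p$ and $p^n \mf M(T) \subset \mathcal{N} \subset \mf M(T)$, which is also finite free. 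Thus $\mathcal{N}|_U$ is a vector bundle, and by \cite[Lemma 14.2.3]{scholzeweinstein2020berkeley}, $\iota_*(\mathcal{N}|_U)$ is a finite free $\Ainf$-module. The same torsion-free diagram chase used for $\mf M(L)$ shows $\mathcal{N} \xto{\sim} \iota_*(\mathcal{N}|_U)$. Frobenius stability of $\mathcal{N}$ (and the required isomorphism on inverting $\tilde{\xi}$) is immediate from the corresponding properties of $\mf M(L)$ and $\mf M(T)$.

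Next I would compute the pair associated to $\mathcal{N}$. The $\Zp$-lattice $T_{\mathcal{N}}$ is recovered from $\mathcal{N}[1/\mu] = (L + p^n T) \otimes \Ainf[1/\mu]$, giving $T_{\mathcal{N}} = L + p^n T$. For the $\BdR^+$-lattice, observe that $p$ is a unit in $\BdR^+$ (its image in the residue field $C$ is nonzero), so $p^n \mf M(T) \otimes_{\Ainf} \BdR^+ = \mf M(T) \otimes_{\Ainf} \BdR^+ = \Xi$; combined with $\mf M(L) \otimes_{\Ainf} \BdR^+ \subset \Xi$, this yields $\Xi_{\mathcal{N}} = \mathcal{N} \otimes_{\Ainf} \BdR^+ = \Xi$. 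Hence $\mathcal{N}$ corresponds to the pair $(L + p^n T, \Xi)$, matching $\mf M(L + p^n T)$, and Fargues' equivalence closes the argument.

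The main obstacle is the finite-freeness step, since $\Ainf$ is non-Noetherian and sums of finite free submodules of a finite free $\Ainf$-module are not automatically free in general. What makes it tractable is the same two-step structure exploited in \Cref{prop:MLisfinitefree}: freeness on $\Spec \Ainf[1/\mu]$ comes from \Cref{lemma:BMS4.26}, and freeness on $\Spec \Ainf[1/p]$ comes from $\mathcal{N}$ being sandwiched between two rank-equal finite free modules after inverting $p$, so the standard Beauville--Laszlo-style gluing goes through unchanged.
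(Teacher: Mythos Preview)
Your proof has a genuine gap at the step where you claim ``the same torsion-free diagram chase used for $\mf M(L)$ shows $\mathcal{N} \xto{\sim} \iota_*(\mathcal{N}|_U)$.'' The chase in \Cref{prop:MLisfinitefree} crucially uses the \emph{intersection} description $\mf M(L) = \mf M(T) \cap (L \otimes \Ainf[1/\mu])$: given $x \in \iota_*(\mf M(L)|_U) \subset \mf M(T)$ with $\mu x \in \mf M(L)$, one deduces $x \in L \otimes \Ainf[1/\mu]$, and the intersection formula then forces $x \in \mf M(L)$. For $\mathcal{N} = \mf M(L) + p^n\mf M(T)$, the analogous step only yields $x \in \mf M(T) \cap ((L+p^nT) \otimes \Ainf[1/\mu])$, which is precisely $\mf M(L+p^nT)$ in the intersection sense --- not a priori $\mathcal{N}$. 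So the chase at best shows $\iota_*(\mathcal{N}|_U) \subset \mf M(L+p^nT)$, and the remaining inclusion $\mf M(L+p^nT) \subset \mathcal{N}$ is exactly the content of the lemma. The argument as written is therefore circular.

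The paper avoids this by reversing the order: it first \emph{defines} $\mf M(L+p^nT) := \mf M(T) \cap ((L+p^nT)\otimes\Ainf[1/\mu])$, so that the chase from \Cref{prop:MLisfinitefree} applies verbatim and gives finite-freeness; checks that this module corresponds to $(L+p^nT,\Xi)$ under Fargues; and only then compares it with $\mathcal{N}$. The comparison goes via the surjection
\[
\frac{\mf M(T)}{\mf M(L+p^nT)} \twoheadrightarrow \frac{\mf M(T)}{\mf M(L)+p^n\mf M(T)},
\]
which becomes an isomorphism after inverting $\mu$. Since the \emph{source} is $\mu$-torsion-free (immediate from the intersection definition), the kernel is $\mu$-torsion inside a $\mu$-torsion-free module, hence zero. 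Your verification that $\mathcal{N}|_U$ is a vector bundle and your computation of the pair $(L+p^nT,\Xi)$ are both fine; what is missing is that the sum $\mathcal{N}$ does not carry the intersection description needed to close the chase, so one must route through the intersection module as an intermediary.
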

\begin{proof}
    The definition of $\mf M(L + p^nT) \subset \mf M(T)$ should be the object among finite free BKF-modules corresponding to the subobject $(L + p^nT, \Xi) \subset (T, \Xi)$ in the category $\mc C$. We can check that this corresponds to the final object among finite free BKF modules with a map to $\mf M(T)$ such that the image after applying the functor $(-\otimes W(C^\tilt)^{\vphi=1}$ is contained in $L + p^nT$. First of all, let's check that the finite free BKF module corresponding to $(L + p^n T, \Xi)$ is given by 
    \begin{align} \label{eqn:defnM(Tn')}
            \mf M(L + p^nT) := \mf M(T) \cap \left( (L + p^nT) \otimes_{\Zp} \Ainf[1/\mu] \right).
    \end{align}
    
    By the same reasoning as \Cref{prop:MLisfinitefree}, we can show that $\mf M(L + p^nT)$ is finite free as an $\Ainf$-module. To show that this corresponds to $(L + p^n T, \Xi)$, we only need to check the following two things: first, that $(\mf M(L + p^nT) \otimes W(C^\tilt))^{\vphi=1} = L + p^nT$, and second, that $\mf M(L + p^nT) \otimes_{\Ainf}\BdR^+ = \Xi_T$. These are straightforward to check if we note that $\Ainf \to W(C^\flat)$ and $\Ainf \to \BdR^+$ are both flat\cite[7.6, 6.1]{cesnaviciuskoshikawa2019cohomology}, and flat base change commutes with finite intersections. 
    
    A priori, it is only clear that we have inclusions $\mf M(L), p^n \mf M(T) \subset \mf M(L + p^nT)$. Consider the quotient map
    \[
        \frac{\mf M(T)}{\mf M(L + p^nT)} \onto \frac{\mf M(T)}{\mf M(L) + p^n\mf M(T)} 
    \]
    We want to check this is an isomorphism. By the expression in (\ref{eqn:defnM(Tn')}), the source and target of the surjection are $\mu$-torsion-free, so we have the following commutative diagram. 
    \[
    \begin{tikzcd}
        \frac{\mf M(T)}{\mf M(L + p^nT)} \arrow[r,twoheadrightarrow]\arrow[d,hook] &\frac{\mf M(T)}{\mf M(L) + p^n\mf M(T)} \arrow[d,hook]
        \\
        \frac{\mf M(T)}{\mf M(L + p^nT)}[1/\mu] \arrow[r,"\sim"] &\frac{\mf M(T)}{\mf M(L) + p^n\mf M(T)} [1/\mu]
    \end{tikzcd}
    \]
    This implies that the top horizontal map must be injective as well as surjective. 
\end{proof}
\begin{lemma}
    The quotient $\MT/\mf M(L + p^nT)$ is $\vphi(\xi)$-torsion free. 
\end{lemma}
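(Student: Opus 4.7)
My plan is to exploit the description
\[
\mf M(L + p^nT) = \MT \cap \bigl((L + p^nT) \otimes_{\Zp} \Ainf[1/\mu]\bigr)
\]
established in the proof of \Cref{lemma:LpnTmodulestructure}, which reduces the $\vphi(\xi)$-torsion-freeness of the quotient to a statement about $\vphi(\xi)$ acting on a free $\Ainf[1/\mu]/p^n$-module. Concretely, I would take $x \in \MT$ with $\vphi(\xi)x \in \mf M(L+p^nT)$ and show $x \in \mf M(L+p^nT)$.

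Because $\Zp \to \Ainf[1/\mu]$ is flat, the short exact sequence $0 \to L + p^nT \to T \to T/(L+p^nT) \to 0$ remains exact after tensoring with $\Ainf[1/\mu]$, so it suffices to show that the image $\bar x$ of $x$ in $(T/(L+p^nT)) \otimes_{\Zp} \Ainf[1/\mu]$ vanishes, knowing $\vphi(\xi)\bar x = 0$. Since $T/L$ is $\Zp$-free, $T/(L+p^nT) \cong (T/L)/p^n$ is a free $\Z/p^n$-module, so the target is a free module over $R := \Ainf[1/\mu]/p^n$. Thus the problem further reduces to showing that $\vphi(\xi)$ is a unit (and in particular a nonzerodivisor) in $R$.

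The main input for this is the mod-$p$ congruence $\vphi(\xi) \equiv u \cdot \mu^{p-1} \pmod{p}$ in $\Ainf$ for some unit $u$. In $\Ainf$ one has $\vphi(\mu) = [\veps^p] - 1 = ([\veps]-1)(1 + [\veps] + \cdots + [\veps]^{p-1}) = \mu \cdot (1 + [\veps] + \cdots + [\veps]^{p-1})$, so $\vphi(\mu)/\mu$ actually lies in $\Ainf$ and belongs to $\ker(\theta \circ \vphi^{-1})$ (applying $\theta \circ \vphi^{-1}$ yields $\sum_{i=0}^{p-1}\zeta_p^i = 0$). Since this ideal is principally generated by $\vphi(\xi)$ in the domain $\Ainf$, the two generators differ by a unit of $\Ainf$. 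Reducing modulo $p$ in $\mcO_C^\tilt = \Ainf/p$, where Frobenius is the $p$-th power map, $\vphi(\mu) \equiv \mu^p$, hence $\vphi(\mu)/\mu \equiv \mu^{p-1}$. Because $\mu$ is invertible in $\Ainf[1/\mu]$, this shows $\vphi(\xi)$ is a unit in $\Ainf[1/\mu]/p$; a geometric-series lift --- inverting $1 - pv$ by $\sum_{i=0}^{n-1}(pv)^i$, using that $p$ is nilpotent in $R$ --- then upgrades this to a unit in $R$.

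Putting it together, $\bar x = 0$, so $x \in (L + p^nT) \otimes_{\Zp} \Ainf[1/\mu]$ and therefore $x \in \mf M(L+p^nT)$. The main point of care will be the unit identification $\vphi(\xi) = u \cdot \vphi(\mu)/\mu$ in $\Ainf$; once that is in hand, the flat base change, the free $R$-module structure, and the geometric series lift are all formal.
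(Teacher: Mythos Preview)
Your proof is correct and is essentially the paper's approach made explicit: the paper simply says that, via the intersection description $\mf M(L+p^nT) = \MT \cap ((L+p^nT)\otimes_{\Zp}\Ainf[1/\mu])$, the argument of \Cref{lemma:MTMLtorsionfree} (which uses $1/\vphi(\xi) = \mu/\vphi(\mu)$) applies verbatim. Your contribution is to spell out the one extra point needed when $T/(L+p^nT)$ is $p$-torsion rather than free---namely that $\vphi(\xi)$ is a unit in $\Ainf[1/\mu]/p^n$---which is exactly the content of the identity $1/\vphi(\xi)=\mu/\vphi(\mu)$ read modulo $p^n$.
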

\begin{proof}
    Using (\ref{eqn:defnM(Tn')}), the same argument works as in \Cref{lemma:MTMLtorsionfree}. 
\end{proof}

\begin{lemma}
    The Nygaard filtration on $\vphi^* \ML$ is induced by that of $\vphi^* \MT$. That is, for all $ r \ge 0$,
    \[
        \Fil^r \vphi^* \ML = \vphi^* \ML \cap \Fil^r \vphi^* \MT.
    \]
    The same is true for $\mf M(L + p^nT)$:
    \[
    \Fil^r \vphi^* \mf M(L + p^nT) = \vphi^* \mf M(L + p^n T) \cap \Fil^r \vphi^* \MT.
    \]
\end{lemma}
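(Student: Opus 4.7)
The plan is to reduce the claim to the $\vphi(\xi)$-torsion-freeness of the quotients $\MT/\ML$ and $\MT/\mf M(L+p^n T)$, which the preceding two lemmas have just established. I will handle both cases uniformly by setting $\mf M' \in \{\ML,\, \mf M(L+p^n T)\}$. Two preliminary observations make this reduction possible. First, $\mf M'$ is effective inside the (assumed effective) BKF module $\MT$: for $\ML$ this is Lemma \ref{lemma:MLstableunderFrob}, and for $\mf M(L+p^n T) = \ML + p^n \MT$ it follows from Lemma \ref{lemma:LpnTmodulestructure} combined with effectiveness of $\ML$ and $\MT$. Second, the inclusion $\mf M' \hookrightarrow \MT$ induces an inclusion $\vphi^* \mf M' \hookrightarrow \vphi^* \MT$ compatible with the Frobenii, so the Frobenius on $\vphi^* \mf M'$ is simply the restriction of that on $\vphi^* \MT$.

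With these in place, the containment $\Fil^r \vphi^* \mf M' \subseteq \vphi^* \mf M' \cap \Fil^r \vphi^* \MT$ is immediate from $\vphi(\xi)^r \mf M' \subset \vphi(\xi)^r \MT$. The content is the reverse inclusion. Given $x \in \vphi^* \mf M'$ with $\vphi(x) \in \vphi(\xi)^r \MT$, I want to show $\vphi(x) \in \vphi(\xi)^r \mf M'$. Writing $\vphi(x) = \vphi(\xi)^r z$ for some $z \in \MT$, effectiveness of $\mf M'$ gives $\vphi(x) \in \mf M'$, so the image $\bar z$ of $z$ in $\MT/\mf M'$ satisfies $\vphi(\xi)^r \bar z = 0$. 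The $\vphi(\xi)$-torsion-freeness of $\MT/\mf M'$ then forces $\bar z = 0$, i.e., $z \in \mf M'$, which yields $\vphi(x) \in \vphi(\xi)^r \mf M'$ as required.

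I do not anticipate a serious obstacle here, since both key structural inputs — effectiveness of $\mf M'$ and $\vphi(\xi)$-torsion-freeness of the cokernel $\MT/\mf M'$ — have already been verified. If anything, the only point worth double-checking is that the injection $\vphi^* \mf M' \hookrightarrow \vphi^* \MT$ is really an injection, which follows either from flatness of $\vphi\colon \Ainf \to \Ainf$ on the relevant localizations, or directly from the fact that $\MT/\mf M'$ is $\tilde{\xi}$-torsion free and $\mu$-torsion free by construction (so in particular $\vphi$-base-change-flat).
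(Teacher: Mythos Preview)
Your proof is correct and follows essentially the same route as the paper: both reduce the nontrivial inclusion to the $\vphi(\xi)$-torsion-freeness of $\MT/\mf M'$ established in the preceding lemmas. Your version is somewhat more explicit about the effectiveness of $\mf M'$ and the injectivity of $\vphi^*\mf M' \hookrightarrow \vphi^*\MT$, which the paper leaves implicit, but the underlying argument is the same.
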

\begin{proof}
    By definition, $\Fil^r \vphi^* \ML = \vphi^{-1}(\vphi(\xi)^r \ML)$. By \Cref{lemma:MTMLtorsionfree}, $\vphi(\xi)^r \ML = \ML \cap \vphi(\xi)^r \MT$. One can then check that $\vphi^{-1}(\vphi(\xi)^r \MT )\cap \ML =\vphi^* \ML \cap \Fil^r \vphi^* \MT$. The same holds for $\mf M(L + p^nT)$. 
\end{proof}

\subsubsection{}\label{subsubsec:ineqofdRlengths}
Using the fact that $\Fil^r \vphi^* \mf M(L + p^nT) \supset \Fil^r \vphi^* \mf M(L) + p^n\Fil^r \vphi^*\mf M(T)$, we also have that $\Fil^r \mf M(L + p^nT)_{\dR} \supset \Fil^r \mf M(L)_{\dR} + p^n\Fil^r \mf M(T)_{\dR}$.

\subsubsection{Preliminaries on $\OC$-modules}
We use the same notation as in \cite[\S 2.2.5]{kisinmocznorthcott}. For an $\mcO_C$-module $M$, we say $M$ is \textit{bounded} if there exists an embedding $M \subset \mcO_C^{\oplus m}$ for some finite $m$. If $M \otimes C$ is finite-dimensional for an $\mcO_C$-module $M$, we say $M$ is finite rank, and we call this dimension the \textit{rank} of $M$. For a bounded $\mcO_C$-module $M$, define
\[
    M^+ := \Hom_{\OC}(\mf m_C, M),
\]
where $\mf m_C\subset \OC$ is the maximal ideal. 
We recall that if $f: M \to N$ is a morphism of $\OC$-modules, then $f$ is called an \textit{almost isomorphism} if the kernel and cokernel of $f$ are $\mf m_C$-torsion.

By \cite[Lemma 2.2.6]{kisinmocznorthcott}, if we change $C$ to be spherically closed complete algebraically closed field over $\Qp$ with valuation $v$ which surjects onto $\R$, then for a bounded $\OC$-module $M$, the module $M^+$ is finite free. One can check that when $M$ is bounded, the natural map $M \to M^+$ is injective and an almost isomorphism. In general, $M \to M^+$ has $\mf m$-torsion kernel. If $M$ was already finite free, then $M \cong M^+$.

\begin{lemma}[\cite{stacks-project}, \href{https://stacks.math.columbia.edu/tag/0ASP}{Tag 0ASP}]\label{lemma:OCmodstructurethm}
    Every finitely presented $\OC$-module $M$ is isomorphic to a finite direct sum 
    \[
        M \cong \bigoplus_{i=1}^n \OC/\pi_i
    \]
    for some $\pi_i \in \OC$, not necessarily in $\mf m_C$. 
\end{lemma}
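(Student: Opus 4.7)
The plan is to prove this by induction on the number of generators of $M$, reducing a presentation matrix to a block-diagonal form by a Smith-normal-form-type argument. The crucial input is that $\mcO_C$ is a valuation ring: for any two elements $a,b \in \mcO_C$ with valuations $v(a), v(b)$, either $a \mid b$ or $b \mid a$, depending on which valuation is smaller.

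Since $M$ is finitely presented, write it as the cokernel of a matrix $A : \mcO_C^m \to \mcO_C^n$. If $A = 0$, then $M \cong \mcO_C^n$ and we are done. Otherwise, among the nonzero entries of $A$ choose one whose valuation is minimal (possible because the value group of $\mcO_C$ is totally ordered). By applying row and column permutation matrices in $\GL_m(\mcO_C)$ and $\GL_n(\mcO_C)$, move this entry to position $(1,1)$ and call it $\pi_1$. By minimality of $v(\pi_1)$, every other entry $b$ in the first row or first column satisfies $b/\pi_1 \in \mcO_C$, so we may subtract the appropriate $\mcO_C$-multiple of the first row (resp.~column) from the other rows (resp.~columns) to zero out all remaining entries in the first row and column. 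These elementary operations correspond to multiplying $A$ on each side by matrices in $\GL(\mcO_C)$, which does not change the isomorphism class of $\coker(A)$. The matrix now has block form $\mathrm{diag}(\pi_1, A')$ with $A'$ of size $(m-1)\times(n-1)$, giving $M \cong \mcO_C/\pi_1 \oplus \coker(A')$.

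Applying the inductive hypothesis to the finitely presented module $\coker(A')$ yields the desired decomposition. The main subtlety is simply ensuring that each elementary row/column operation remains invertible over $\mcO_C$ (which it is, as all the elementary matrices used belong to $\GL(\mcO_C)$), and that the process genuinely terminates; the latter is automatic because the size of the residual block strictly decreases at each step. The entire argument uses nothing specific about $C$ beyond the fact that $\mcO_C$ is a valuation ring, which is exactly the generality of the stacks-project reference cited.
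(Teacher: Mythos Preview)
Your proof is correct and is essentially the argument in the Stacks Project reference that the paper cites; the paper itself does not provide an independent proof but simply invokes Tag 0ASP, whose proof is precisely this Smith-normal-form reduction over a valuation ring.
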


\begin{remark}
    For any complete algebraically closed field $C$ over $\Qp$, we can still define a normalized length function on finitely presented $\OC$-modules. See \cite[\S 7.10]{cesnaviciuskoshikawa2019cohomology}.
\end{remark}

\begin{lemma} \label{lemma:Nr}
    For each $r = 0,\ldots,w$, there exists a finitely presented $\OC$-module $\mf N_r$ such that
    \begin{align}
          \lambda \left( \frac{\gr^r \mf M(T)_{\dR}}{\gr^r \mf M(L + p^nT)_{\dR}} \right) \le \lambda (\mf N_r \otimes_{\mcO_C} \mcO_C/p^n).
    \end{align}
\end{lemma}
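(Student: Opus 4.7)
The strategy is to take $\mf N_r$ to be the $r$-th graded piece of the ``de Rham quotient'' $\mf M(T)_{\dR}/\mf M(L)_{\dR}$, and then use the decomposition $\mf M(L + p^n T) = \mf M(L) + p^n \mf M(T)$ of \Cref{lemma:LpnTmodulestructure} to translate comparisons with $\mf M(L + p^n T)$ into a statement about the $p^n$-torsion quotient of $\mf N_r$.

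First I would verify that each of $\gr^r \mf M(T)_{\dR}$, $\gr^r \mf M(L)_{\dR}$, and $\gr^r \mf M(L + p^n T)_{\dR}$ is a finitely presented $\OC$-module, and that the latter two are naturally $\OC$-submodules of the first. Finite presentation follows from the fact that $\mf M(-)_{\dR}$ is finite free over $\OC$ (cf.~\Cref{prop:MLisfinitefree}) together with the structure theorem \Cref{lemma:OCmodstructurethm}. The submodule identifications follow from applying the short exact sequence (\ref{eqn:sesgrdR}) to the inclusions $\mf M(L) \subset \mf M(T)$ and $\mf M(L + p^n T) \subset \mf M(T)$ of finite free BKF modules, both of which have $\vphi(\xi)$-torsion-free cokernel by \Cref{lemma:MTMLtorsionfree} and its analog for $\mf M(L + p^n T)$. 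Setting
\[
\mf N_r \,:=\, \gr^r \mf M(T)_{\dR}\big/\gr^r \mf M(L)_{\dR},
\]
the elementary identity $M/(A + p^n M) \cong (M/A)/p^n(M/A)$ gives
\[
\mf N_r \otimes_{\OC} \OC/p^n \;\cong\; \frac{\gr^r \mf M(T)_{\dR}}{\gr^r \mf M(L)_{\dR} + p^n \gr^r \mf M(T)_{\dR}}.
\]

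Next, I would pass the $\Fil$-containment of \S \ref{subsubsec:ineqofdRlengths} to the associated graded pieces inside $\gr^r \mf M(T)_{\dR}$, yielding the submodule inclusion
\[
\gr^r \mf M(L)_{\dR} + p^n \gr^r \mf M(T)_{\dR} \;\subseteq\; \gr^r \mf M(L + p^n T)_{\dR},
\]
so that quotienting by the larger submodule produces a surjection
\[
\mf N_r \otimes_{\OC} \OC/p^n \;\twoheadrightarrow\; \frac{\gr^r \mf M(T)_{\dR}}{\gr^r \mf M(L + p^n T)_{\dR}}.
\]
Since both sides are finitely presented $\OC$-modules, additivity of the normalized length $\lambda$ applied to the short exact sequence associated to this surjection yields the claimed inequality.

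The main technical point is the passage from the $\Fil$-inclusion to the corresponding inclusion on graded pieces inside $\gr^r \mf M(T)_{\dR}$. This relies on the Nygaard filtrations on $\vphi^* \mf M(L)$ and $\vphi^* \mf M(L + p^n T)$ being induced from that on $\vphi^* \mf M(T)$, which is exactly the content of the lemma immediately preceding \S \ref{subsubsec:ineqofdRlengths}; with that identification in hand, the remaining steps are essentially formal diagram chases together with additivity of $\lambda$.
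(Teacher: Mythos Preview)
Your overall strategy---define $\mf N_r$ as the $r$th graded piece of the quotient $\mf M(T)_{\dR}/\mf M(L)_{\dR}$ and then use the containment from \S\ref{subsubsec:ineqofdRlengths}---is the same idea as the paper's, and the passage from $\Fil$ to $\gr$ is handled correctly. However, there is a genuine gap at the finite presentation step.

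You assert that $\gr^r \mf M(L)_{\dR}$ is finitely presented because $\mf M(L)_{\dR}$ is finite free. This does not follow: $\OC$ is a non-Noetherian valuation ring, so a submodule of a finite free $\OC$-module need not even be finitely generated (the maximal ideal $\mf m_C$ is already a counterexample inside $\OC$). The reason $\Fil^r\mf M(T)_{\dR}$ and $\Fil^r\mf M(L+p^nT)_{\dR}$ are finite free is that $T$ and $L+p^nT$ are Galois-stable, so their BKF modules descend to BK modules over some $\mf S = W(k)[[u]]$ and the filtered pieces come from $\mcO_{F'_v}$-modules. But $L$ itself is not assumed Galois-stable over any finite extension, so $\Fil^r\mf M(L)_{\dR}$ is merely a bounded $\OC$-submodule of $\mf M(T)_{\dR}$ and may fail to be finitely presented. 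Consequently your $\mf N_r = \gr^r\mf M(T)_{\dR}/\gr^r\mf M(L)_{\dR}$ need not be finitely presented, and the normalized length $\lambda$ is not a priori defined on it.

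The paper repairs exactly this issue by passing through the functor $M \mapsto M^+ := \Hom_{\OC}(\mf m_C, M)$, which sends bounded $\OC$-modules to finite free ones via an almost isomorphism. The paper's $\mf N_r$ is built from $(\Fil^r\mf M(T)_{\dR})^+$ and $(\Fil^r\mf M(L)_{\dR})^+$ rather than the naive modules; this guarantees finite presentation, and the almost-isomorphism property ensures the relevant length comparisons survive. Indeed, in \S\ref{subsubsec:rankofNr} the paper notes that your proposed $\gr^r\mf M(T)_{\dR}/\gr^r\mf M(L)_{\dR}$ is only almost isomorphic to the correctly defined $\mf N_r$.
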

\begin{proof}
Note that $\Fil^r\mf M(T)_{\dR}$, $\Fil^r \mf M(L + p^nT)_{\dR}$, and $\Fil^r\mf M(L)_{\dR} + p^n\Fil^r\mf M(T)_{\dR}$ are all submodules of the finite free $\mcO_C$-module $\mf M(T)_{\dR}$ and hence are bounded $\mcO_C$-modules. Since $T$ and $L + p^nT$ are Galois-stable lattices in a crystalline representation, the $\mcO_C$-modules $\Fil^r\mf M(T)_{\dR}$, $\Fil^r \mf M(L + p^nT)_{\dR}$ are base-changed from $\mcO_{F'_v}$ modules for some finite $F'_v/\Qp$, and thus are finite free modules already. However, $\Fil^r\mf M(L)_{\dR}$ is not necessarily finite free. 
One can check using the snake lemma that the maps 
\begin{align}
    &\frac{\Fil^r \MT_{\dR}}{\Fil^r \mf M(L + p^nT)_{\dR}} \to \frac{(\Fil^r \MT_{\dR})^+}{(\Fil^r \mf M(L + p^nT)_{\dR})^+} \\
    &\frac{\Fil^r \MT_{\dR}}{\Fil^r \mf M(L)_{\dR} + p^n\mf M(T)_{\dR}} \to \frac{(\Fil^r \MT_{\dR})^+}{(\Fil^r \mf M(L)_{\dR} + p^n\mf M(T)_{\dR})^+}
\end{align}
are almost isomorphisms mapping to finitely presented $\mcO_C$-modules. The first map is an isomorphism because the domain was already a finitely presented $\mcO_C$-module, but the second map shows that the quotient $\frac{\Fil^r \MT_{\dR}}{\Fil^r \mf M(L)_{\dR} + p^n\mf M(T)_{\dR}}$ can be given a well-defined normalized length. 

From \ref{subsubsec:ineqofdRlengths} and left-exactness of the $M \mapsto M^+$ functor, we have an inequality of normalized lengths
\begin{align}
    \lambda \left(  \frac{\Fil^r \MT_{\dR}}{\Fil^r \mf M(L + p^nT)_{\dR}} \right) \le \lambda \left(  \frac{(\Fil^r \MT_{\dR})^+}{(\Fil^r \mf M(L)_{\dR} + p^n\mf M(T)_{\dR})^+} \right). 
\end{align}
Moreover, one can check that $(\Fil^r\ML_{\dR})^+ + p^n (\Fil^r\MT_{\dR})^+ \into (\Fil^r \mf M(L)_{\dR} + p^n\mf M(T)_{\dR})^+$ is an almost isomorphism, so
\begin{align}
    \lambda \left(  \frac{\Fil^r \MT_{\dR}}{\Fil^r \mf M(L + p^nT)_{\dR}} \right) \le \lambda \left(  \frac{(\Fil^r \MT_{\dR})^+}{(\Fil^r \mf M(L)_{\dR})^+} \bmod{p^n}\right). 
\end{align}

Define
\begin{align}
    \mf N_r:= \frac{(\Fil^r \MTdR)^+/(\Fil^{r+1} \MTdR)^+}{(\Fil^r \MLdR)^+/(\Fil^{r+1} \MLdR)^+}.
\end{align}
This is a finitely presented $\OC$-module.  Then we have 
\begin{align}
    \lambda \left(\frac{\gr^r \MTdR}{\gr^r(\M(L +p^nT))_{\dR} }\right) &\le \lambda(\mf N_r\bmod{p^n}).
\end{align}
\end{proof}

\subsubsection{} \label{subsubsec:rankofNr}
Since $\mf N_r$ is a finitely presented $\OC$-module, we can use \Cref{lemma:OCmodstructurethm} to write $\mf N_r \cong \OC^{\oplus n_r} \oplus \bigoplus_i \OC/\pi_i$ for $\pi_i \in \mf m_C$. Note that $n_r$ is the rank of $\mf N_r$. Then
\begin{align} \label{eqn:limitisrank}
    \lim_{N\to \infty} \frac{1}{N}\lambda(\mf N_r\bmod{p^N}) = n_r. 
\end{align}
One can use the snake lemma to show that the natural map
\begin{align*}
    \frac{\gr^r \MTdR}{\gr^r\MLdR} \to \mf N_r
\end{align*}
is an almost isomorphism.
Therefore they have the same rank as $\OC$-modules, and we can rewrite (\ref{eqn:limitisrank}) as 
\begin{align}
     \lim_{N\to \infty} \frac{1}{N}\lambda(\mf N_r\bmod{p^N})
    &= 
    \dim_C\left( \frac{\gr^r \MTdR}{\gr^r\M(L)_{\dR} } \otimes_{\OC} C\right).
\end{align}

\subsubsection{}
\label{subsubsec:limitmuineq}
We can apply \Cref{lemma:Nr} and \ref{subsubsec:rankofNr} to get the following inequality. 
\begin{align*}
    \lim_n \mu\left(\frac{\mf M(T)}{\mf M(L + p^nT)}\right)
    &=\lim_n \frac{1}{n(\rk T - \rk L)} \sum_{r=0}^w r \cdot \lambda \left( \frac{\gr^r \MT_{\dR}}{\gr^r \M(L + p^n T)_{\dR}}\right)\\
    &=  \frac{1}{(\rk T - \rk L)} \sum_{r=0}^w r \cdot\lim_n \left(\frac{1}{n} \lambda \left( \frac{\gr^r \MT_{\dR}}{\gr^r \M(L + p^n T)_{\dR}} \right)\right)\\
    &\le \frac{1}{(\rk T - \rk L)} \sum_{r=0}^w r \cdot \dim_C \left(\frac{\gr^r \mf M(T)_{\dR }\otimes C}{\gr^r \MLdR \otimes C}\right)\\
    &= \frac{1}{(\rk T - \rk L)} \sum_{r=1}^w \dim_C(\Fil^r \MT_{\dR}[1/p]) - \dim_C(\Fil^r \ML_{\dR}[1/p]).
\end{align*}

\subsection{Limiting slope in terms of Frobenius cokernels}

Recall that the filtration on $\ML_{\dR}$ is defined as follows.
\begin{definition}\label{defn:FilMLdR}
    The $r$th filtered piece of $\ML_{\dR}$ is the image of the composite map
    \[
        \Fil^r \vphi^* \ML \xto{\subseteq} \vphi^* \ML \to \vphi^* \ML \otimes_{\Ainf} \frac{\Ainf}{\vphi(\xi)\Ainf}=: \ML_{\dR}.
    \]
    In particular, this means that
    $
        \Fil^r \ML_{\dR} \cong \frac{\Fil^r \vphi^* \ML}{\Fil^r \vphi^* \ML \cap \vphi(\xi)\vphi^*\ML}. 
    $
    One can check that the denominator is equal to $\vphi(\xi)\Fil^{r-1}\vphi^*\ML$, so that 
    \begin{align}
        \Fil^r \ML_{\dR} \cong \frac{\Fil^r \vphi^* \ML}{\vphi(\xi)\Fil^{r-1} \vphi^* \ML}.
    \end{align}
\end{definition}

\subsubsection{}\label{subsubsec:QvTQvLdefn}
Let $v: \ol{\Q}\to C$ denote an extension of the embedding $v: F \to C$ determined by a place $v \mid p$ of the number field $F$.
As in the proof of \Cref{lemma:nuintegral}, this determines different representations $\Gal_{F_v} \to \GL(T)$, and hence different BKF modules $\mf M_v(T)$.
For convenience of notation, let's write
\[
     Q_v(T) :=  \frac{\mf M_v(T)}{\vphi^*\mf M_v(T)}[1/p], \quad \quad Q_v(L) :=  \frac{\mf M_v(L)}{\vphi^*\mf M_v(L)}[1/p].
\]
We will drop the subscript $v$ when we are working with a single choice of $v$ in context. 

\subsubsection{}
\label{subsubsec:CDrelatingdRtoQT}
Using the description of $\Fil^r \ML_{\dR}$ from \ref{defn:FilMLdR}, we have the following commutative diagram, where $M[a]$ denotes the $a$-torsion of the module $M$. 
\[
\begin{tikzcd}
{\Fil^{r-1} \vphi^* \ML[1/p]} \arrow[dd, "\times \vphi(\xi)"', hook] \arrow[rd, "\frac{\vphi}{\vphi(\xi)^{r-1}}"] \arrow[rr, bend left] &                                  & {Q(L)[\vphi(\xi)^{r-1}]} \arrow[rd, "\subseteq", hook] \arrow[dd, hook]          &                                     \\
                                                                                                                                        & {\ML[1/p]} \arrow[rr, two heads] &                                                                                                               & {Q(L)} \\
{\Fil^r \vphi^* \ML[1/p]} \arrow[ru, "\frac{\vphi}{\vphi(\xi)^r}"'] \arrow[rr, bend right] \arrow[d, two heads]                         &                                  & {Q(L)[\vphi(\xi)^r]} \arrow[ru, "\subseteq"', hook] \arrow[d, two heads]         &                                     \\
{\Fil^r(\ML_{\dR}[1/p])\cong \frac{\Fil^r \vphi^* \ML}{\vphi(\xi)\Fil^{r-1} \vphi^* \ML}[1/p]} \arrow[rr]                               &                                  & {\frac{Q(L)[\vphi(\xi)^r]}{Q(L)[\vphi(\xi)^{r-1}]}} &                                    
\end{tikzcd}
\]
The two surjective vertical maps in the diagram are taking quotients of the preceding inclusions. 
\begin{remark}
    Note that the same diagram works if we replace $\mf M(L)$ with $\mf M(T)$ and $Q(L)$ with $Q(T)$. 
\end{remark}
\begin{remark}\label{claim:FildRisomtocokertorsion}
    One can check that the bottom horizontal map is an isomorphism, and is compatible with inclusion into $\Fil^{r-1}$ on the LHS and multiplication by $\vphi(\xi)$ on the RHS. 
\end{remark}

\subsubsection{How dimensions of $\Fil^r \ML_{\dR}$ reflect the structure of $Q(L)$}
\label{subsubsec:QLandFildR}
Because the cokernel of Frobenius on $\ML$ and $\MT$ are both finitely generated $\vphi(\xi)$-power torsion $\Ainf[1/p]$-modules, we thus can write 
\[
    Q(L) = \frac{\ML}{\vphi^*\ML}[1/p] \cong \bigoplus_{i=1}^{\rk L} \BdR^+/\mf m^{n_i}, \quad Q(T)= \frac{\MT}{\vphi^*\MT}[1/p] \cong \bigoplus_{i=1}^{\rk T} \BdR^+/\mf m^{m_i}
\]
where $n_i, m_i \ge 0$, and $\mf m \subset \BdR^+$ is the maximal ideal. Using this presentation of the Frobenius cokernels and the fact that the bottom horizontal map in the above commutative diagram is an isomorphism, we have that 
\begin{align}
    \dim_C (\Fil^r\ML_{\dR}[1/p]) &= \#\{ i \;:\; n_i \ge r\}.
\end{align}
Then,
\begin{align}
     \sum_{r=1}^w \dim_C (\Fil^r\ML_{\dR}[1/p])&= \ell_{\BdR^+}\left( Q(L) \right). 
\end{align}

\subsubsection{}\label{subsubsec:QvTlengthinvariant}
    We can rewrite $\ell_{\BdR^+}(Q_v(T))$ as the weighted sum of the Hodge numbers of the motive $M$ which gives rise to $T$:
    \begin{align}
        \ell_{\BdR^+}(Q_v(T)) &= \sum_{r=0}^w r \dim_C \gr^r (\mf M_v(T)_{\dR}[1/p])
        \\
        &= \sum_{r=0}^w r \dim_C \gr^r(D_{\dR}(T[1/p]) \otimes_{F_v} C).
    \end{align}
    A priori, this does depend on the choice of $v: \ol{F}\to C$, but is actually determined by the Hodge numbers of the de Rham realization $M_{\dR}$ of the motive $M$ (recall that $M_{\dR}$ is a filtered $F$-vector space). 
    \begin{align}
        \ell_{\BdR^+}(Q_v(T)) = \sum_{r=0}^w r \dim_F \gr^r(M_{\dR}).
    \end{align}
    By Hodge symmetry, we have that $\dim_F \gr^r (M_{\dR}) = \dim_F \gr^{w-r}(M_{\dR})$, so that 
    \begin{align}
         \ell_{\BdR^+}(Q_v(T)) = \frac{w}{2} \dim_F(M_{\dR}) = \frac{w}{2} \rk_{\Zp}(T).
    \end{align}

\subsubsection{}
\label{subsubsec:limnuusingcokerlength}
Let $T_n' := L + p^nT$.
Combining \ref{subsubsec:limitmuineq}, \ref{subsubsec:QvTlengthinvariant}, and \ref{subsubsec:QLandFildR}, we then have that 
    \begin{align*}
        \lim_{n \to \infty} \nu(T, T_n') &= \sum_{v\mid p} \frac{[F_v:\Qp]}{[F:\Q]} \int_{\Gal_F} \lim_n \mu \left(\frac{\mf M_v(T)}{\mf M_v(\sigma L + p^nT)}\right) d \sigma \\
        &\le \sum_{v\mid p} \frac{[F_v:\Qp]}{[F:\Q]} \int_{\Gal_F} \frac{1}{(\rk T - \rk L)} \sum_{r=1}^w \dim_C(\Fil^r \MT_{\dR}[1/p]) \\
        & \hspace{7cm}- \dim_C(\Fil^r \mf M(\sigma L)_{\dR}[1/p]) \; d \sigma \\
        &= \sum_{v\mid p} \frac{[F_v:\Qp]}{[F:\Q]} \int_{\Gal_F} \frac{1}{(\rk T - \rk L)} \left(  \frac{w}{2} \rk(T) - \ell_{\BdR^+}(Q_v(\sigma L))\right) d\sigma.
    \end{align*}
If we assume that the image of $\Gal_F$ is open in $G(\Qp)$, the value of $\ell_{\BdR^+}(Q_v(\sigma L))$ for $\sigma \in \Gal_F$ in a subset of Haar measure 1 will equal the generic value of $\ell_{\BdR^+}(Q_v(g \cdot L))$ for $g \in G(\Qp)$. 

\subsection{Hodge--Tate cocharacters}

\subsubsection{}
\label{subsubsec:weirdBdRstuff}
Let's set up some notation as labeled in the following commutative diagram. Below, $\alpha: \Ainf[1/p]\to \BdR^+$ is induced by the usual completion of $\Ainf[1/p]$ with respect to $\ker(\theta)$. The map $\beta$ is induced by the completion of $\Ainf[1/p]$ with respect to $\ker(\theta\circ \vphi^{-1})$. As the limit of isomorphisms, the middle vertical map $\gamma$ is an isomorphism. 
\[
\begin{tikzcd}
{\Ainf[1/p]} \arrow[r, "\alpha"'] \arrow[d, "\vphi"] \arrow[rr, "\theta", bend left]          & {\lim_n \frac{\Ainf[1/p]}{\xi^n}} \arrow[d, "\gamma:=\lim_n (\vphi)"] \arrow[r, two heads] & C \arrow[d, equal] \\
{\Ainf[1/p]} \arrow[r, "\beta"] \arrow[rr, "\theta \circ \vphi^{-1}"', two heads, bend right] & {\lim_n \frac{\Ainf[1/p]}{\vphi(\xi)^n}} \arrow[r, two heads]                              & C                               
\end{tikzcd}
\]
By \Cref{lemma:BMS4.26}, we have that $\MT [1/\mu] = T \otimes_{\Zp} \Ainf[1/\mu]$. Observe that $\beta(\mu) \in (\BdR^+)^\times$. Combining these, we have 
\begin{align}\label{eqn:MTvsTusingbeta}
    \MT \otimes_{\Ainf, \beta} \BdR^+ = \MT[1/\mu] \otimes_{\Ainf,\beta} \BdR^+ = T \otimes_{\Zp} \BdR^+.
\end{align}
\begin{remark}
    The LHS of the above equation is not the same as $\Xi_T:= \MT \otimes_{\Ainf,\alpha}\BdR^+$ \cite[Theorem 4.28]{BMS2018integral}. Namely, $\vphi^*(\MT) \otimes_{\Ainf, \beta}\BdR^+ = \MT \otimes_{\Ainf, \beta \circ \vphi} \BdR^+ = \MT \otimes_{\Ainf, \gamma \circ \alpha} \BdR^+ = \Xi_T \otimes_{\BdR^+, \gamma} \BdR^+ $. 
\end{remark}

In particular, we have $\mf M(T) \otimes_{\Ainf, \theta \circ \vphi^{-1}} C \cong T \otimes_{\Zp}C$.

\subsubsection{}\label{subsubsec:gradingonCvsp}
By \ref{subsubsec:weirdBdRstuff}, we have a canonical identification of 
$$
\mf M(T) \otimes_{\Ainf, \theta \circ \vphi^{-1}} C \cong T \otimes C.
$$ 
When $\mf M(T)$ comes from a Breuil--Kisin module, i.e.~$T$ is a $\Gal_{F_v}$-stable $\Zp$-lattice in a crystalline representation, one has the Hodge--Tate decomposition $T \otimes C \cong \gr^* D_{\dR}(T[1/p]) \otimes_{F_v} C$. 
Because $\Fil^r D_{\dR}(T[1/p]) \otimes_{F_v} C = \Fil^r \mf M(T)_{\dR}[1/p]$ \cite[2.18]{koshikawa2015heights}, we can rewrite the Hodge--Tate decomposition as
$$
T \otimes C \cong \bigoplus_{r=0}^w \gr^r(\mf M(T)_{\dR}[1/p]).
$$

\subsubsection{}\label{subsubsec:choiceofbasis}
One can choose a basis $e_1,\ldots,e_s,e_{s+1},\ldots, e_n$ of $\mf M(T) \otimes \BdR^+$ such that the image of $e_1,\ldots, e_s$ in $Q(T)$ form a set of generators of $Q(T)$ of minimal size.
We may write 
\[
Q(T) \cong \bigoplus_{i=1}^s \BdR^+/\mf m^{m_i}, \quad m_i \ge 1.
\]
Then a basis of $\imag(\vphi_{\mf M(T)}) \otimes_{\Ainf} \BdR^+ = \ker(\mf M(T) \otimes \BdR^+ \to Q(T))$ is given by 
\[
\vphi(\xi)^{m_1}e_1,\ldots,\vphi(\xi)^{m_s}e_s, e_{s+1},\ldots, e_n.
\]

\begin{lemma}\label{lemma:gradingintermsofbasis}
    The grading on $\mf M(T) \otimes C$ given by the Hodge--Tate decomposition on $T \otimes C$ can be expressed in terms of the basis $\ol{e}_1,\ldots, \ol{e}_n$ of $\mf M(T) \otimes C$, where $\ol{e}_i$ is the image of $e_i$ under the natural map $\mf M(T) \otimes \BdR^+ \to \mf M(T) \otimes C$. In particular, the grading is given by:
    \[
        \begin{aligned}
            &\gr^0(\mf M(T) \otimes C) = \Span_C(\ol{e}_{s+1},\ldots,\ol{e}_n), &\\
            & \gr^{r}(\mf M(T) \otimes C) = \Span_C(\{\ol{e}_i \; : \; 1 \le i \le s, \; r = m_i\}), &r \ge 1.
        \end{aligned}
    \]  
\end{lemma}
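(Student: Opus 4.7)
The plan is to show that each $\ol{e}_i$ lies in the graded piece $\gr^{m_i}(\mf M(T) \otimes C)$ of the Hodge--Tate decomposition from \S\ref{subsubsec:gradingonCvsp}, and then conclude by a dimension count.

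First, I would verify dimension compatibility: \S\ref{subsubsec:QLandFildR} shows that $\dim_C \gr^r(\mf M(T)_{\dR}[1/p])$ equals $\#\{i : m_i = r\}$ for $r \ge 1$ and equals $n - s$ for $r = 0$, matching the dimensions of the claimed spans. Next, for $i > s$: by the choice of basis in \S\ref{subsubsec:choiceofbasis}, $e_i \in \imag(\vphi_{\mf M(T)}) \otimes \BdR^+$, so $\ol{e}_i$ lies in the image of $\imag(\vphi_{\mf M(T)}) \otimes \BdR^+$ inside $\mf M(T) \otimes C$, a subspace of dimension $n - s$ which coincides with $\gr^0(\mf M(T) \otimes C)$ under the Hodge--Tate identification (the containment holds because elements of $\imag(\vphi)$ come from the trivial step $\Fil^0/\Fil^1$ on the de Rham side, and the dimensions agree).

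For $i \le s$, I would construct an explicit Nygaard-filtered lift. Let $f_i$ be the unique preimage of $\vphi(\xi)^{m_i} e_i$ under the injection $\vphi \otimes 1 : \vphi^* \mf M(T) \otimes \BdR^+ \hookrightarrow \mf M(T) \otimes \BdR^+$; such $f_i$ exists since $\vphi(\xi)^{m_i} e_i \in \imag(\vphi_{\mf M(T)}) \otimes \BdR^+$ by assumption. By construction $f_i \in \Fil^{m_i} \vphi^*\mf M(T) \otimes \BdR^+$, and $f_i \notin \Fil^{m_i + 1}$ because $\ol{e}_i \neq 0$ in $\mf M(T) \otimes C$, so $f_i$ represents a nonzero class in $\gr^{m_i}(\mf M(T)_{\dR}[1/p])$. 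Tracing the identifications in \S\ref{subsubsec:CDrelatingdRtoQT} and \S\ref{subsubsec:weirdBdRstuff}, the Hodge--Tate isomorphism sends this class to the reduction of $\vphi(f_i)/\vphi(\xi)^{m_i} = e_i$ modulo $\vphi(\xi) \mf M(T)$, namely $\ol{e}_i$. Combined with the dimension check, the inclusions $\Span_C(\{\ol{e}_i : m_i = r\}) \subseteq \gr^r(\mf M(T) \otimes C)$ must be equalities.

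The main obstacle is this last identification: the Hodge--Tate isomorphism is a transcendental comparison rather than an a priori morphism of Breuil--Kisin--Fargues modules, so explicitly verifying $[f_i] \mapsto \ol{e}_i$ requires carefully composing the comparison with the identifications of \S\ref{subsubsec:CDrelatingdRtoQT} and \S\ref{subsubsec:weirdBdRstuff}, and may require refining the basis of \S\ref{subsubsec:choiceofbasis} (using the freedom to modify each $e_i$ with $i \le s$ by an element of $\imag(\vphi_{\mf M(T)}) \otimes \BdR^+$, which shifts $\ol{e}_i$ by an element of $\gr^0$) to ensure compatibility with the Hodge--Tate grading.
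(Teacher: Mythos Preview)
Your approach is correct and essentially the same as the paper's: both identify $\gr^0$ with the image of $\imag(\vphi_{\mf M(T)})$ in $\mf M(T)\otimes C$, and for $r\ge 1$ both trace the map $\vphi/\vphi(\xi)^r$ from the diagram in \S\ref{subsubsec:CDrelatingdRtoQT}. The paper works subspace-by-subspace, computing $\gr^r\mf M(T)_{\dR}[1/p]$ as the subquotient $Q(T)[\vphi(\xi)^r]/(\vphi(\xi)Q(T)[\vphi(\xi)^{r+1}]+Q(T)[\vphi(\xi)^{r-1}])$ and then reading off the spanning set from the chosen basis, whereas you work element-by-element via explicit lifts $f_i$; these are two presentations of the same computation.

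Your worry at the end is unnecessary. The basis in \S\ref{subsubsec:choiceofbasis} is chosen so that $e_1,\dots,e_s$ already map to a minimal generating set of $Q(T)\cong\bigoplus_i \BdR^+/\mf m^{m_i}$, i.e.\ they are already diagonal with respect to this decomposition; no further refinement modulo $\gr^0$ is needed. Once you observe that the map $\frac{1}{\vphi(\xi)^r}\vphi$ lands $\Fil^r$ in $\mf M(T)$ and that reducing mod $\vphi(\xi)$ factors through the subquotient of $Q(T)$ above (this is exactly the content of the diagram and \Cref{claim:FildRisomtocokertorsion}), the identification $[f_i]\mapsto \ol{e}_i$ is immediate and the dimension check becomes redundant.
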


\begin{proof}
    From \ref{subsubsec:gradingonCvsp} and the description of $\Fil^0 \mf M(T)_{\dR}[1/p]$ in \ref{subsubsec:CDrelatingdRtoQT}, $\gr^0(T \otimes C) = \ker(\mf M(T) \otimes C \onto Q(T) \otimes C )$. By \ref{subsubsec:choiceofbasis}, we see that the $\gr^0$ is as desired. Moreover, the fact that $\ker(\mf M(T) \otimes C \to Q(T) \otimes C$ exactly one graded piece of $\mf M(T) \otimes C$, one can view $Q(T) \otimes C$ as a sub-graded vector space of $\mf M(T) \otimes C$, corresponding to the positively graded pieces and spanned by $\ol{e}_1,\ldots, \ol{e}_s$. 

    From the description of $\Fil^r \mf M(T)_{\dR}[1/p]$ in \ref{subsubsec:CDrelatingdRtoQT}, 
    \begin{align*}
        \gr^r \mf M(T)_{\dR}[1/p] &= 
        \frac{\frac{\Fil^r \vphi^*\mf M(T)}{\vphi(\xi)\Fil^{r-1}\vphi^*\MT}}
        {\frac{\Fil^{r+1} \vphi^*\mf M(T)}{\vphi(\xi)\Fil^{r}\vphi^*\MT}}[1/p] 
        \xrightarrow[\sim]{\frac{1}{\vphi(\xi)^r}\vphi_{\MT}(-)} 
        \frac{\frac{Q(T)[\vphi(\xi)^r]}{Q(T)[\vphi(\xi)^{r-1}]}}
        {\frac{\vphi(\xi)Q(T)[\vphi(\xi)^{r+1}]}{\vphi(\xi)Q(T)[\vphi(\xi)^{r}]}} 
    \end{align*}
    The image is
    \[  
        \frac{Q(T)[\vphi(\xi)^r]}{\vphi(\xi)Q(T)[\vphi(\xi)^{r+1}] + Q(T)[\vphi(\xi)^{r-1}]}\subset \frac{Q(T)}{\vphi(\xi)Q(T) + Q_T[\vphi(\xi)^{r-1}]}.
    \]
    By our choice of basis, we can view the ambient space $\frac{Q(T)}{\vphi(\xi)Q(T) + Q_T[\vphi(\xi)^{r-1}]}$
    as a subspace of $Q(T) \otimes C$ and thus a subspace of $\mf M(T) \otimes C$, being generated by the elements $\{ \ol{e}_i \; : \; 1 \le i \le s, \; m_i \ge r\}$. The image of $\gr^r \mf M(T)_{\dR}[1/p]$ is therefore spanned by the set $\{ \ol{e}_i \; : \; 1 \le i \le s, \; m_i = r\}$. 
\end{proof}

\subsubsection{}
\label{subsubsec:CDgLQT}
Recall from \ref{subsubsec:limnuusingcokerlength} that we want to consider the values of $\ell_{\BdR^+}(Q(g \cdot L))$ for $g \in G(\Zp)$.

We have the following commutative diagram. 
\[\begin{tikzcd}
	& {g \cdot L \otimes \BdR^+} & {T \otimes \BdR^+} & {\mf M(T) \otimes \BdR^+} & {Q(T)} \\
	{g \cdot L} \\
	& {g \cdot L \otimes C} & {T \otimes C} & {\mf M(T) \otimes_{\Ainf, \theta\circ \vphi^{-1}} C} & {Q(T)/\vphi(\xi)Q(T)}
	\arrow[hook, from=1-2, to=1-3]
	\arrow[two heads, from=1-2, to=3-2]
	\arrow["\sim", from=1-3, to=1-4]
	\arrow[two heads, from=1-3, to=3-3]
	\arrow[two heads, from=1-4, to=1-5]
	\arrow[two heads, from=1-4, to=3-4]
	\arrow[two heads, from=1-5, to=3-5]
	\arrow[from=2-1, to=1-2]
	\arrow[from=2-1, to=3-2]
	\arrow[hook, from=3-2, to=3-3]
	\arrow["\sim", from=3-3, to=3-4]
	\arrow[two heads, from=3-4, to=3-5]
\end{tikzcd}\]

Suppose that $L$ has rank 1. 
Using the basis $\ol{e}_1,\ldots, \ol{e}_n$ of $\mf M(T) \otimes C$ from \Cref{lemma:gradingintermsofbasis}, one can define projections $\proj_{\gr^r (\MT \otimes C)}(L \otimes C)$ of $L \otimes C$ to $\gr^r (T \otimes C)$ for $r \ge 0$. When $L$ has rank 1, define
\[
    r_{\max}(L\otimes C) := \max \{r \ge 0 \; : \; \proj_{\gr^r (\MT \otimes C)}(L \otimes C) \neq 0\}.
\]
When $\rk(L) = i > 1$, the exterior power $\ext^i_{C}(\mf M(T) \otimes C)$ has a natural grading induced from that of $\mf M(T) \otimes C$, and we will consider the quantity $r_{\max}(\ext^i (L \otimes C))$ in \Cref{lemma:lengthrmaxineq}. 
\begin{remark}
    \label{rmk:rmaxandanglemu}
    When $\rk L = 1$, the quantity $r_{\max}$ is equal to the definition of $\angles{\bm{\mu}_v, L \otimes C}^\circ$ in \cite[3.5.11]{kisinmocznorthcott}, if we take the increasing filtration $\Fil^{\circ \bm{\mu}_v}$ on $\mf M(T) \otimes C$ coming from our grading on $\mf M(T) \otimes C$ from \Cref{lemma:gradingintermsofbasis}. However, note that the cocharacter $\bm{\mu}$ used in in \cite[3.5.11]{kisinmocznorthcott} is valued in $G_{C^\tilt}$ rather than $G_{C}$, but other than changing the cocharacter, the definition is the same. 
\end{remark}
\begin{lemma}\label{lemma:lengthrmaxineq} 
    The following inequality holds for any saturated $\Zp$-submodule $L \subset T$, with $\mf M(T)$ effective. Then
    \[
        \ell_{\BdR^+}(Q(L))= \ell_{\BdR^+}(Q(\ext^{\rk L}_{\Zp} L)) \ge r_{\max}(\ext^{\rk L}_{C} (L \otimes C)).
    \]
\end{lemma}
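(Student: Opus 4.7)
The plan is to first identify $\mf M(\ext^{\rk L}_{\Zp} L)$ with $\ext^{\rk L}_{\Ainf}\mf M(L)$, which immediately gives the equality and reduces the inequality to the rank-one case, and then to settle that case by direct computation in the basis from \ref{subsubsec:choiceofbasis}.

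For the equality, I would verify that Fargues' equivalence (\Cref{thm:farguesequivalence}) commutes with top exterior powers: both $\ext^{\rk L}_{\Ainf}\mf M(L)$ and $\mf M(\ext^{\rk L}_{\Zp} L)$ should correspond to the pair $(\ext^{\rk L}_{\Zp} L,\, \ext^{\rk L}_{\BdR^+}\Xi_L)$. The only nontrivial check is the identity $\ext^{\rk L}_{\BdR^+}(\Xi \cap L \otimes_{\Zp}\BdR) = \ext^{\rk L}_{\BdR^+}\Xi \cap (\ext^{\rk L}_{\Zp} L \otimes_{\Zp}\BdR)$ inside $\ext^{\rk L}_{\Zp} T \otimes_{\Zp}\BdR$, which is a standard computation for top exterior powers of $\BdR^+$-lattices. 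Writing $Q(L) \cong \bigoplus_{i=1}^{\rk L}\BdR^+/\vphi(\xi)^{n_i}$ and applying $\ext^{\rk L}$ to the Frobenius map on $\mf M(L)$ (effectively taking the determinant) yields $Q(\ext^{\rk L}_{\Zp} L) \cong \BdR^+/\vphi(\xi)^{\sum n_i}$, so $\ell_{\BdR^+}(Q(\ext^{\rk L}_{\Zp} L)) = \sum_i n_i = \ell_{\BdR^+}(Q(L))$.

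Now assume $\rk L = 1$ (by replacing $L \subset T$ by $\ext^{\rk L}_{\Zp} L \subset \ext^{\rk L}_{\Zp} T$ and using the induced grading on $\ext^{\rk L}_C(\mf M(T)\otimes C)$). Let $\ell$ generate $L$ and set $n_L := \ell_{\BdR^+}(Q(L))$. By (\ref{eqn:defML}) and flatness of $\Ainf \to \Ainf[1/\mu]$, one has $\mf M(L)\otimes_{\Ainf}\Ainf[1/\mu] = L \otimes_{\Zp}\Ainf[1/\mu]$, and the argument of \ref{subsubsec:weirdBdRstuff} then yields $\mf M(L)[1/p]\otimes_{\Ainf[1/p], \beta}\BdR^+ = L \otimes_{\Zp}\BdR^+ = \BdR^+\cdot\ell$, a rank-one $\BdR^+$-submodule of $T \otimes_{\Zp}\BdR^+ = \mf M(T)[1/p]\otimes_{\Ainf[1/p], \beta}\BdR^+$. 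Since $Q(L)$ has length $n_L$, tensoring the short exact sequence $0 \to \imag(\vphi_L)[1/p] \to \mf M(L)[1/p] \to Q(L) \to 0$ with $\BdR^+$ via $\beta$ (this stays exact since $Q(L)$ is already $\vphi(\xi)$-power torsion) gives $\imag(\vphi_L)[1/p]\otimes_{\beta}\BdR^+ = \vphi(\xi)^{n_L}\BdR^+\cdot\ell$. Expanding $\ell = \sum_{i=1}^n a_i e_i$ in the basis of \ref{subsubsec:choiceofbasis}, the inclusion $\imag(\vphi_L) \subset \imag(\vphi_T)$ together with the explicit basis of $\imag(\vphi_T)[1/p]\otimes_\beta \BdR^+$ given there forces $\vphi(\xi)^{n_L}a_i \in \vphi(\xi)^{m_i}\BdR^+$ for each $i \le s$, equivalently $a_i \in \vphi(\xi)^{\max(0,\, m_i - n_L)}\BdR^+$. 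By \Cref{lemma:gradingintermsofbasis}, $r_{\max}(L\otimes C) = \max\{m_i : \bar a_i \neq 0\}$; if this were strictly greater than $n_L$, some $i \le s$ with $m_i > n_L$ would have $\bar a_i \neq 0$, contradicting $a_i \in \vphi(\xi)^{m_i - n_L}\BdR^+$. Hence $n_L \ge r_{\max}(L\otimes C)$, as required.

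The main obstacle I anticipate is keeping careful track of which $\BdR^+$-completion (via $\alpha$ or $\beta$ in the notation of \ref{subsubsec:weirdBdRstuff}) is active at each step, since the basis of \ref{subsubsec:choiceofbasis} lives in the $\beta$-completion while \Cref{lemma:gradingintermsofbasis} describes the grading on the reduction $\mf M(T)\otimes_{\Ainf, \theta\circ\vphi^{-1}}C$. The two conventions are compatible (reducing $\beta$ mod $\vphi(\xi)$ recovers $\theta\circ\vphi^{-1}$), but invoking that compatibility correctly to match the $\bar a_i$ and $\bar e_i$ in the two descriptions is the technical heart of the argument.
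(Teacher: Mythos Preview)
Your proposal is correct and essentially parallel to the paper's proof in the rank-one case: both expand a generator of $L\otimes\BdR^+$ in the basis $e_1,\dots,e_n$ of \ref{subsubsec:choiceofbasis} and read off the constraint. The paper phrases it via the image of $L\otimes\BdR^+$ in $Q(T)$ (a unit coefficient at $e_{i_0}$ with $m_{i_0}=r_{\max}$ forces length $\ge r_{\max}$), while you phrase it dually via $\imag(\vphi_L)\otimes_\beta\BdR^+\subset\imag(\vphi_T)\otimes_\beta\BdR^+$; these are equivalent.

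The genuine difference is in higher rank. The paper does \emph{not} pass to $\ext^d L\subset\ext^d T$ and re-run the rank-one argument. Instead it chooses a basis $v_1,\dots,v_d$ of $L\otimes\BdR^+$ whose reductions $\ol v_i$ satisfy a genericity condition (distinct leading projections), so that $r_{\max}(\ext^d(L\otimes C))=\sum_i r_{\max}(\Span_C(\ol v_i))$, and then bounds each summand by the length of the cyclic $\BdR^+$-submodule generated by $v_i$ in $Q(T)$, with the genericity ensuring these lengths sum to $\ell_{\BdR^+}(Q(L))$. Your route via exterior powers is cleaner, but it silently uses that \Cref{lemma:gradingintermsofbasis} and the basis construction of \ref{subsubsec:choiceofbasis} transfer to $\ext^d\mf M(T)$: concretely, that $\{e_{i_1}\wedge\cdots\wedge e_{i_d}\}$ is again a basis of the required type, with weights $m_{i_1}+\cdots+m_{i_d}$, and that the Hodge--Tate grading on $\ext^dT\otimes C$ is the induced one. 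These checks are routine (tensoriality of the Hodge--Tate decomposition and of Fargues' equivalence), but they are what replaces the paper's basis argument, so they deserve to be made explicit. The lattice identity $\ext^d(\Xi\cap L\otimes\BdR)=\ext^d\Xi\cap(\ext^dL\otimes\BdR)$ you flag is indeed the key point for the equality, and it holds because $\Xi_L$ is saturated in $\Xi$ over the DVR $\BdR^+$.
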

\begin{proof}
    First suppose that $\rk_{\Zp}(L ) = 1$. 
    If $r_{\max} = 0$, then $L \otimes \BdR^+ \subset \imag(\vphi_{\MT}) \otimes \BdR^+ \subset \MT \otimes \BdR^+$, and $\ell_{\BdR^+}(Q(L)) = 0$. 

    Suppose again that $\rk_{\Zp}(L) = 1$, and that $r_{\max} > 0$. 
    Then $\ell_{\BdR^+}(Q(L)) \ge r_{\max}$, because the fact that $L \otimes C$ has a nonzero projection to $\gr^{r_{\max}}(Q(T) \otimes C)$ means that $Q(L)$ is spanned by an element $\sum_{i=1}^s a_i e_i \in Q(T)$, $a_i \in \BdR^+$, such that for an $i_0$ with $m_{i_0}= r_{\max}$, the coefficient $a_{i_0} \in (\BdR^+)^\times$. In particular, $Q(L)$ is not $\mf m^{r_{\max}-1}$-torsion as a torsion $\BdR^+$-module with a single generator. 

    Now suppose that $\rk_{\Zp}(L) = d > 1$. Then $\ell_{\BdR^+}(Q(L)) = \ell_{\BdR^+}(Q(\ext^d_{\Zp} L))$, since both measure the determinant of the map $\vphi^* \ML \otimes \BdR^+ \to \ML \otimes \BdR^+$. 
    
    On the other hand, we now consider the quantity $r_{\max}(\ext^i (L \otimes C))$. We may choose a basis $v_1,\ldots,v_d$ of $L \otimes \BdR^+$ such that the mod $\mf m$ reductions $\ol{v}_1,\ldots, \ol{v}_d$ in $L \otimes C$ satisfy the following condition: For all $i \neq j$, 
    \begin{align}
        \label{eqn:basiscondition}
         \proj_{\gr^{r_i}(\MT\otimes C)}(\Span_{C}(\ol v_i)) \neq \proj_{\gr^{r_j}(\MT \otimes C)}(\Span_{C}(\ol v_j)),
    \end{align}
    where we write $r_i := r_{\max}(\Span_C(\ol v_i))$. Using the natural grading on $\ext^i \mf M(T ) \otimes C$, we then have that 
    $$
    r_{\max}(\ext^i (L \otimes C))=\sum_{i=1}^d r_{\max}(\Span_C(\ol v_i))
    $$
    for this choice of basis. 
    The submodule $\BdR^+ \cdot v_i$ either is contained in $\imag(\vphi_{\mf M(T)})\otimes \BdR^+$ or generates a torsion $\BdR^+$-submodule of $Q(T)$ of length at least $r_{\max}(\Span_C(\ol v_i))$.
    In other words, for each $i=1,\ldots,d$, we have the inequality
    \[
        \ell_{\BdR^+}\left(\imag(\BdR^+ \cdot v_i \to Q(T))\right) \ge r_{\max}(\Span_C(\ol{v}_i)). 
    \]
    The condition (\ref{eqn:basiscondition}) ensures that $\sum_{i=1}^d  \ell_{\BdR^+}\left(\imag(\BdR^+ \cdot v_i \to Q(T))\right) = \ell_{\BdR^+}(Q(L))$. 
\end{proof}

\subsubsection{}
\label{subsubsec:twogenericvalues}
By \Cref{lemma:lengthrmaxineq}, we may consider $L$ to have rank $1$, generated by an element $v \in T$, which has an action of $G$ defined over $\Zp$. The $\BdR^+$-length of the module generated by the image of $g \cdot v$ under the fixed map $T \otimes \BdR^+ \xto{\sim} \mf M(T) \otimes \BdR^+ \onto Q(T)$ divides $g \in G$ into finitely many strata. There is a generic stratum which is Zariski open in $G$, and this corresponds to the maximum value of $\ell_{\BdR^+}(Q(g\cdot L))$.
Using the interpretation of \Cref{rmk:rmaxandanglemu}, the generic value of $r_{\max}(g \cdot L \otimes C)$ for $g \in G(C)$ is also the maximum value over all $g \in G(C)$ and is achieved by $g$ in a Zariski open subset of $G$.

Let $W$ be the $C$-vector space of $\MT \otimes C$ spanned by $g \cdot L \otimes C$ for $g \in G(C)$. Let $q_0$ be the least $q$ such that $\proj_{\gr^{w-q}(\MT \otimes C)}(W)\neq 0$. Recall that $w$ comes from the weight of the Hodge structure of the motive that $T$ is associated to.
Let $p_0$ be the least $p$ such that $\proj_{\gr^{p}(\MT \otimes C)}(W)\neq 0$.

Recall that the cocharacter $\bm{\mu}_v$ which defines the grading on $\MT \otimes C$ has image inside of $G$, and so $W$ is stable under the action of $\bm{\mu}_v$, hence inherits a grading from $\bm \mu_v$ compatible with that of $\MT \otimes C$. 
Therefore $W \subset \bigoplus_{r = p_0}^{w-q_0} \gr^r (\MT \otimes C)$, and $W$ is \emph{not} contained in $\bigoplus_{r = p_0+1}^{w-q_0} \gr^r (\MT \otimes C)$ or $\bigoplus_{r = p_0}^{w-q_0-1} \gr^r (\MT \otimes C)$.

Using this notation, we have the following. 

\begin{corollary}\label{cor:genericlengthQ}
    The generic length $\ell_{\BdR^+}(Q_v(g\cdot L)) $ for $g \in G(\BdR^+)$ is $\ge w - q_0$.
\end{corollary}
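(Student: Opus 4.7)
The plan is to exhibit one element $\tilde g_0 \in G(\BdR^+)$ at which $\ell_{\BdR^+}(Q_v(\tilde g_0 \cdot L)) \ge w - q_0$, and then invoke the fact recalled in \ref{subsubsec:twogenericvalues} that the Zariski open stratum of $G$ is exactly the one where the length attains its maximum.

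First I would use the definition of $q_0$: the subspace $W \subset \MT \otimes C$ has nonzero image in $\gr^{w-q_0}(\MT \otimes C)$. Since $W$ is the $C$-linear span of the translates $g \cdot L \otimes C$ as $g$ ranges over $G(C)$, and projection onto a graded piece is linear, some particular $g_0 \in G(C)$ must already satisfy $\proj_{\gr^{w-q_0}(\MT \otimes C)}(g_0 \cdot L \otimes C) \neq 0$. Now I would pick any basis $v_1, \ldots, v_{\rk L}$ of $g_0 \cdot L \otimes C$ satisfying the distinct top-grade condition (\ref{eqn:basiscondition}) from the proof of Lemma \ref{lemma:lengthrmaxineq}, and set $r_i := r_{\max}(\Span_C(v_i)) \ge 0$. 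Since some vector in $g_0 \cdot L \otimes C$ has nonzero $\gr^{w-q_0}$-projection while the containment $W \subset \bigoplus_{r \le w-q_0} \gr^r(\MT \otimes C)$ forbids any larger grade, it follows that $\max_i r_i = w - q_0$. The induced grading on $\ext^{\rk L}_C(\MT \otimes C)$ then yields
\[
    r_{\max}\bigl( \ext^{\rk L}_C(g_0 \cdot L \otimes C) \bigr) \;=\; \sum_{i=1}^{\rk L} r_i \;\ge\; \max_i r_i \;=\; w - q_0.
\]

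Next I would lift $g_0 \in G(C)$ to some $\tilde g_0 \in G(\BdR^+)$ using surjectivity of $G(\BdR^+) \twoheadrightarrow G(C)$, which holds because $G = G_{MT}$ is smooth over $\Q$. Since the quantity $r_{\max}$ depends only on the residue in $G(C)$, Lemma \ref{lemma:lengthrmaxineq} applied to the saturated submodule $\tilde g_0 \cdot L \subset T$ yields
\[
    \ell_{\BdR^+}(Q_v(\tilde g_0 \cdot L)) \;\ge\; r_{\max}\bigl( \ext^{\rk L}_C(g_0 \cdot L \otimes C) \bigr) \;\ge\; w - q_0.
\]
Combined with the generic-stratum description in \ref{subsubsec:twogenericvalues}, this forces the maximum, hence the generic value, of $\ell_{\BdR^+}(Q_v(g \cdot L))$ to be at least $w - q_0$.

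The main thing I would want to pin down carefully is the interplay between the $G(\BdR^+)$-level data (which determines the actual length) and its $G(C)$-reduction (which controls the lower bound via $r_{\max}$). The stratification in \ref{subsubsec:twogenericvalues} is formulated on $G$, and one should check that the open stratum is visible after pulling back along $G(\BdR^+) \twoheadrightarrow G(C)$, so that realizing the bound at a single $\tilde g_0$ really does force it generically. This should reduce to upper-semicontinuity of the rank of the projection onto $\gr^{w-q_0}$ in an algebraic family, which is standard.
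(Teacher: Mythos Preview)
Your proof is correct and follows essentially the same route as the paper: use the definition of $q_0$ to realize $r_{\max}=w-q_0$ for some (hence generic) $g$, then apply \Cref{lemma:lengthrmaxineq}. Two small differences are worth noting. First, the paper has already reduced to $\rk L=1$ at the start of \ref{subsubsec:twogenericvalues}, so the wedge-product bookkeeping you carry out is unnecessary there (though harmless and correct). Second, rather than exhibiting a single $g_0$ and then invoking ``generic $=$ max'', the paper simply observes that the Zariski-open stratum in $G(C)$ on which $r_{\max}(g\cdot L\otimes C)=w-q_0$ already gives $\ell_{\BdR^+}(Q_v(g\cdot L))\ge w-q_0$ for every $g$ in that stratum via \Cref{lemma:lengthrmaxineq}; the lift to $G(\BdR^+)$ and the semicontinuity check you flag are not needed because the lower bound already holds on an open set.
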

\begin{proof}
    By definition of $w-q_0$ in \ref{subsubsec:twogenericvalues}, a generic $g \in G(\BdR^+)$ will satisfy $r_{\max}(g \cdot L \otimes C) = w- q_0$. By the generic behavior discussed in \ref{subsubsec:twogenericvalues} and \Cref{lemma:lengthrmaxineq}, $\ell_{\BdR^+}(Q_v(g\cdot L)) \ge w - q_0$. 
\end{proof}

\subsubsection{}
Let $\bm{\mu}'_v: \Gm \to G_C$ denote the cocharacter given by the Hodge decomposition (see \cite[Proof of Lemma 2.4.4]{kisinmocznorthcott}). 
Consider instead the conjugate embedding $c \circ v: \ol F \to C$. By \cite[Lemma 2.4.4(3)]{kisinmocznorthcott}, $\bm{\mu}_v \sim \bm \mu_v'$ and $\bm \mu_{c\circ v} \sim \bm \mu_{c \circ v}'$. 
By Hodge symmetry, the product 
\begin{align}\label{eqn:cocharHodgesymmetry}
    \bm \mu_v'  \bm \mu_{c\circ v}' = \bm w^{-1},
\end{align}
where $\bm w$ is the weight cocharacter. In particular, $\bm w(z)$ acts by $ \times z^{-w}$ on singular cohomology. 

For the two $G$-conjugate cocharacters $\bm \mu_v \sim \bm \mu_v'$ acting on the $G$-stable vector space $W \otimes C$, the maximum degree of a nonzero graded piece of $W \otimes C$ is the same for both cocharacters, namely $w-q_0$. The same is true for $\bm \mu_{c\circ v} \sim \bm \mu_{c \circ v}'$ acting on $W \otimes C$, whose maximum degree of nonzero graded piece of $W \otimes C$ is $w-p_0$ by the relation in (\ref{eqn:cocharHodgesymmetry}). 

\begin{prop}\label{prop:ineqconjpair}
    The following inequality holds when $\rk L = 1$.
    \[
        \max_{g \in G(\BdR^+)}\left( \ell_{\BdR^+}(Q_v(g\cdot L)) \right) +\max_{g \in G(\BdR^+)} \left(\ell_{\BdR^+}(Q_{c \circ v}(g\cdot L))\right) \ge w.
    \]
    If equality is achieved, then $\bm \mu_v$ is stable on $L \otimes C$.
\end{prop}
\begin{remark}
    If $\rk L > 1$, then the inequality is true if we replace $L$ with $\ext^{\rk L} L$, and we replace $w$ on the right-hand side of the inequality with $\rk (L) \cdot w$, which is the weight of the Hodge structure formed by the $\rk(L)$-th exterior product. 
\end{remark}
\begin{proof}
    By \Cref{cor:genericlengthQ}, 
    $$
        \max_{g \in G(\BdR^+)} \ell_{\BdR^+}(Q_v(g\cdot L)) +\max_{g \in G(\BdR^+)} \ell_{\BdR^+}(Q_{c \circ v}(g\cdot L)) \ge (w-q_0) + (w-p_0) \ge w,
    $$
    where the final inequality comes from the fact that $p_0 + q_0 \le w$. If equality holds, then $p_0 + q_0 = w$, which implies that $W$ consists of only one graded piece. Therefore $\bm \mu_v$ acts by a scalar on $W$, and $L\otimes C \subset W$ is preserved by the action of $\bm \mu_v$. 
\end{proof}

\subsection{Proof of the $p$-power isogeny case of the theorem}\label{subsec:fixedprimeheightbound}

Now we can compute what happens to the change in height as $n \to \infty$. 
\begin{align}
    h(M'_n)-h(M) &= \log_p \#(T/T_n') \left( \frac{w}{2} - \nu(T,T_n')\right)
\end{align}
Continuing from \ref{subsubsec:limnuusingcokerlength}, we have that
\begin{align}
    \lim_{N \to \infty }\nu(T,T_N') &\le  \sum_{v\mid p} \frac{[F_v:\Qp]}{[F:\Q]} \int_{\Gal_F} \frac{1}{(\rk T - \rk L)} \left(  \frac{w}{2} \rk(T) - \ell_{\BdR^+}(Q_v(\sigma L))\right) d\sigma \\
    & = \frac{1}{\rk(T)-\rk(L)}
    \sum_{v\mid p} \frac{[F_v:\Qp]}{[F:\Q]} \int_{G(\Zp)} \frac{w}{2} \rk(T) - \ell_{\BdR^+}( Q_v(g\cdot L))  dg,
\end{align}
where we use the fact that the image of $\Gal_F$ is open in $G(\Zp)$. From the generic value discussion in \ref{subsubsec:twogenericvalues}, we can replace the integral with the integrand's generic value, namely
\begin{align}
    \frac{1}{\rk(T)-\rk(L)}
    \sum_{v\mid p} \frac{[F_v:\Qp]}{[F:\Q]}\left( \frac{w}{2} \rk(T) - \max_{g \in G(\Zp)} \ell_{\BdR^+}( Q_v(g\cdot L)) \right).
\end{align}
We may replace $F$ with its normal closure so that $F/\Q$ is Galois, hence $[F_v:\Qp]$ is constant for all $v \mid p$. Let $s(p)$ denote the number of places of $F$ above $p$, so that $[F_v:\Qp]/[F:\Q] = 1/s(p)$. Then we may apply \Cref{prop:ineqconjpair} as follows:
\begin{align}
    \lim_{N \to \infty }\nu(T,T_N') 
    &\le \frac{1}{\rk(T)-\rk(L)} \cdot \frac{1}{s(p)}
    \sum_{v\mid p} \left( \frac{w}{2} \rk(T) - \max_{g \in G(\Zp)} \ell_{\BdR^+}( Q_v(g\cdot L)) \right) \\
    &= \frac{1}{\rk(T)-\rk(L)} \cdot \frac{1}{2 s(p)}
    \sum_{v\mid p}\left(  w \rk(T) - \max_{g \in G(\Zp)}\ell_{\BdR^+}( Q_v(g\cdot L)) -  \max_{g \in G(\Zp)}\ell_{\BdR^+}( Q_{c \circ v}(g\cdot L))  \right) \\
    &\le \frac{1}{\rk(T)-\rk(L)} \cdot \frac{1}{2 s(p)}
    \sum_{v\mid p}\left(  w \rk(T) - w \rk(L) \right) \\
    &= \frac{w}{2}.
\end{align}
By \Cref{prop:ineqconjpair}, if the inequality is an equality, then $L \otimes C$ is stable under $\bm \mu_v$ for every embedding $v: \ol{F} \into C$.

By the Mumford--Tate conjecture, the images of $\mu_v$ for all $v: \ol{F} \into C$ generate $G$ as an algebraic group over $C$. Therefore $L \otimes C$ is preserved by the action of $G(C)$ on $T \otimes C$. This action agrees with the action of $G(\Zp)$ on $T$, so $G(\Zp)$ (and thus a finite-index open subgroup of $\Gal_F$) preserves $L \subset T$. 

In particular, if the inequality is an equality, then there exists a finite extension $E/F$ of number fields such that $L$ is a $\Gal_E$-subrepresentation of $T$. The $p$-power isogenies of Koshikawa motives defined by the finite-index inclusion $L + p^nT \subset T$ of the $p$-part of the \'etale realization thus are isogenies defined over $E$. 
By \cite[Theorem 9.8]{koshikawa2015heights}, these may only attain finitely many values for their heights.

\section{Main result: varying prime case}
\label{section:varyingprime}

\begin{prop} \label{prop:differentprimes}
    Fix a pure weight $w$ Koshikawa motive ${ M} = { H}^w(X)$. Assume the adelic Mumford--Tate conjecture, absolute Hodge conjecture, and $D_{\dR}$ compatibility conjecture (\Cref{conj:adelicMT}, \ref{conj:blasius}, \ref{conj:absolutehodge}) hold for $ M$. For any constant $c > 0$, there exists a constant $N = N(c)$ such that the set 
    \[
        \mscr{I}({ M})_{\ol{F}}^{<c}:=  \{ M' \in \mathscr{I}({ M})_{\ol{F}} \; : \; h({ M}')< c\}
    \]
    consists of isogenies $\iota: \hat{T}' \into \hat{T}$ with $\#(\hat{T}/\hat{T}')$ divisible by only primes $p < N(c)$. 
\end{prop}

Then, for each of those finitely many primes $p$, \Cref{thm:fixedprimecase} shows that there can only be finitely many height values of elements in $\mscr{I}({ M})_{\ol{F}}^{<c}$ which come from $p$-power isogenies, and we have the main result (\Cref{mainthmB}). 

The rest of this section is dedicated to proving \Cref{prop:differentprimes}. We give an overview here of the method of Kisin--Mocz and note where we have to make changes for the setting of Kato--Koshikawa heights. 

\td{Establish notation conventions which are different from before.}

\td{I do not understand the remark 3.5.7 about Serre's conjecture recovering the HT cocharacter from the inertial weights. Need to look at Caruso.}

\nts{Outline of the proof: For large enough $p$, we are in the Fontaine-Laffaille case by Gee--Liu--Savitt (3.3.8). Moreover, you know the FL weights and the structure of $\mf M / \vphi^* \mf M$. The FL case is what allows you to get bounds on the slope of any line $\mf N(L) \subset \mf M$ (3.4.13). 
}
\nts{
Being in $G^{\bm \mu}\text - \Mod ^\vphi_{/k[[u]]}$ can be interpreted to mean that $\vphi_{\mf M_V}$ can be written as a diagonal matrix such that $\vphi(\vec{e}_i) = u^{p_i}\vec{e}_i$, where the $\{p_i\}$ are exactly the HT weights given by $\bm \mu$. This is basically the condition for being FL type as a mod $p$ BK module with the FURTHER condition that the weights showing up in the FL definition have to match the weights of $\bm \mu$. So this is forcing the action of the Frobenius on the mod $p$ BK module to match up with the HT weights.
}
\nts{
3.5.6 seems to be boosting 3.3.8 to the situation of adding in the expectation that the Galois representation factors through $G$. It says that when the HT weights of a crystalline Galois rep are small relative to $p$, then the mod $p$ BK module coming from the crystalline Galois rep will be of FL type and the structure of $\mf M / \vphi^* \mf M $ will match the HT weights.
Moreover, when the crystalline Galois rep factors through $G$, this might impose that $p$ needs to be a bit larger depending on $G$. I am not sure about this part of the argument. 
}

\nts{The HT cocharacter has a representative which is $G_k$-valued. This is the cocharacter used in the definition of $G^{\bm \mu}\text - \Mod$. The result of 3.5.6 allows you to say that the FL }

\subsubsection{Dependence of $p \gg 0$} \label{sssec:dependenceonp}
For clarity, we list the instances in the proof of \Cref{prop:differentprimes} where we place a condition on $p$ being sufficiently large.
\begin{itemize}
    \item to make sure $X$ has good reduction at all places above $p$ (so that the Galois representation is crystalline)
    \item so that $p$ is bigger than all the Hodge--Tate weights, so that the BK module is of Fontaine--Laffaille type (\cite{kisinmocznorthcott}, 3.3.8)
    \item to make sure $F$ is totally unramified at all places above $p$, which is required for the Fontaine--Laffaille condition to be satisfied
    \item to be in the $G^\mu\text-\Mod$ category (which is a different condition from the previous condition in the case $G_{MT}\neq GL_n$)
    \item to satisfy some bound depending on the action of $G$ on the $\Z[1/N]$-module $V$ (\cite{kisinmocznorthcott}, 3.5.9)
    \item to ensure that the representation $V \otimes \Fp$ is a semisimple representation and irreducible $\Q$-subspaces of $V_{\Q}$ stay irreducible mod $p$ (\cite{kisinmocznorthcott}, 3.5.10)
    \item such that (in the statement of the adelic MT conjecture) the lift $\Gal_E \to G'(\Zp)$ is surjective. 
\end{itemize}


\subsection{Switching to subobjects instead of quotients of BK modules}
\label{subsec:subobjectsnotquotients}
Suppose that $f: { M}' \to { M}$ is a $p$-power isogeny such that $T_p / T_p'$ is $p$-torsion. 
There is a map of Koshikawa motives $g: { M} \to { M}'$ such that $f \circ g: { M} \to { M}$ is the $p$-power isogeny defined by the injection $T_p \xto{\times p} T_p$. We can thus compute 
\begin{align*}
    h({ M}') - h({{M}}) 
        &= \log \#(T_p/T_p')  \left(  \frac{w}{2} - \nu(T_p, T_p')\right) \\
    h({ M}) - h({{M}}') 
        &= \log \#(T_p'/pT_p)  \left(  \frac{w}{2} - \nu(T_p', pT_p)\right).
\end{align*}

For a given prime $p$, there is a finite Galois extension $F'/F$ such that all of the finitely many $\Zp$-submodules $T_0$ such that $pT_p \subset T_0 \subset T_p$ are stable under the action of $\Gal_{F'}$. In other words, the action of $\Gal_F$ on such sublattices $T_0$ factors through the finite quotient $\Gal(F'/F)$.
From \Cref{lemma:nuintegral}, we have
\begin{align}
    \nu(T_p', pT_p) = \sum_{v\mid p} \frac{[F_v:\Qp]}{[F:\Q]} \cdot \frac{1}{|\Gal(F'/F)|} \sum_{\sigma \in \Gal(F'/F)}\mu \left(\frac{\mf M_{v}(\sigma T_p')}{\mf M_{v}(pT_p)}\right) 
\end{align}

The above expression involves slopes of various submodules of $\mf M(T_p/pT_p) \otimes_{\mf S, \vphi} \Ainf = \mf M(T_p)$. Given the data of an $\Fp$-subspace $L$ of $T_p/pT_p$, one can associate the submodule
\begin{align} \label{eqn:NLdef}
    \mf N(L) := (L \otimes_{\Fp} C^\tilt) \cap (\mf M(T) \otimes \mcO_{C^\tilt}). 
\end{align}
Observe that this is analogous to the definition of the BKF module $\mf M(L)$ in \ref{ss:constructionML}, as both come from the canonical identification of $\mf M(T) \otimes_{\Ainf} W(C^\tilt) \cong T \otimes_{\Zp} W(C^\tilt)$. In particular, $\mf N(\sigma T_p'/pT_p) = \frac{\mf M(\sigma T_p')}{\mf M(pT_p)}$, where the right-hand side is viewed as a torsion BKF module. To summarize, we have the following.
\begin{align}\label{eqn:dualnu}
    \nu(T_p', pT_p) = \sum_{v\mid p} \frac{[F_v:\Qp]}{[F:\Q]} \cdot \frac{1}{|\Gal(F'/F)|} \sum_{\sigma \in \Gal(F'/F)}\mu \left( \mf N(\sigma \cdot L ) \right),
\end{align}
where $L := T_p'/pT_p \subset T_p/pT_p$.

\subsection{Replacing Galois group action with an algebraic group action}\label{subsec:galoistoalgebraicgroup}

The adelic Mumford--Tate assumption (\Cref{conj:nonHodgemaxadelicMT}) ensures that there is a finite extension $E/F$ such that for $p \gg 0$,
\[
    \imag(\Gal_E \to G_{MT}(\Zp)) = \imag(G'(\Zp) \xto{\pi} G_{MT}(\Zp))
\]
In particular, the degree $[E:F]$ provides a bound independent of $p$ on the index of the subgroup
\[
    \imag(G'(\Fp) \xto{\pi} G_{MT}(\Fp)) = \imag(\Gal_E \to G_{MT}(\Fp)) \subseteq \imag(\Gal_F \to G_{MT}(\Fp)) .
\]

Therefore we can replace the summation over a Galois orbit in (\ref{eqn:dualnu}) with summation over a $G'(\F_p)$-orbit: choose $t \le [E:F]$ coset representatives $h_1,\ldots,h_t$ for $\pi(G'(\Fp))$ in $\imag(\Gal_F \to G_{MT}(\Fp))$. We may also assume without loss of generality that $F/\Q$ is Galois, so that $[F_v:\Qp]/[F:\Q] = 1/s(p)$, where $s(p)$ is the number of places of $F$ above $p$. Then
\begin{align}
    \label{eqn:nuG-orbit}
    \nu(T_p', pT_p) = \frac{1}{s(p)t} \sum_{\substack{v\mid p \\ j=1,\ldots,t}} \frac{1}{|G'(\Fp)|} \sum_{g' \in G'(\Fp)}\mu \left( \mf N(g' \cdot h_j L ) \right).
\end{align}
The problem is then reduced to studying the variation of slopes of torsion BK submodules $\mf N(g' \cdot h_j L)$ in a given BK module $\mf M(T_p) \bmod p$ for all $g' \in G'(\Fp)$, where $h_j L \subset T_p/p$ is a fixed $\Fp$-vector subspace.

In order to obtain lower bounds on such slopes which depend on $p$, Kisin--Mocz work with $p$-torsion BK modules of Fontaine--Laffaille type, whose structure and slopes are more tractable to study. 
In particular, Fontaine--Laffaille BK modules have a good structure theory as extensions of irreducible FL modules, and moreover, the Hodge--Tate grading on $\mb V(\mf M)$ can be matched with the structure of the cokernel of $\vphi_{\mf M}$. This allows us to link the slope of a torsion BK module of FL type to the Hodge filtration, which ultimately provides the bounds on the slope to prove the main result. 

In addition, replacing the Galois group action with the action of an algebraic group allows us to use results from algebraic geometry and representations of algebraic groups to study the average slope $\mu(\mf N(g' L))$ as $g' \in G'(\Fp)$ varies. 

\subsection{Fontaine--Laffaille modules}
Let $K$ be an algebraic extension of $\Qp$ with a discrete valuation. In this subsection, we will assume that $k$, the residue field of $K$, is algebraically closed, and that $K$ is totally unramified, i.e.~$K=K_0$, and we let $E(u)=u-p$.

\begin{defn}
    Let $\mf M \in \Mod^{\vphi}_{/k[[u]]}$, i.e.~a torsion Breuil-Kisin module which is a finite free $k[[u]]$-module. We say that $\mf M$ is of \emph{Fontaine--Laffaille (FL)} type if $\mf M$ is effective and satisfies the following condition: there exist bases of $\vphi^* \mf M$ and $\mf M$ such that the map $\vphi_{\mf M}$ is given by a diagonal matrix with entries $u^{i_1},\ldots, u^{i_r}$ with $i_j \in [0,p-1]$.

    We write $\Mod^{\vphi, FL}_{/k[[u]]} \subset \Mod^{\vphi}_{/k[[u]]}$ for the full subcategory of modules of FL type. 
\end{defn}

\begin{rmk}
    Note that the definition implies that $\Mod^{\vphi, FL}_{/k[[u]]} \subset \Mod^{\vphi, \le p-1}_{/k[[u]]} $. We will denote by $\Mod^{\vphi,\le h, FL}_{/k[[u]]} \subset \Mod^{\vphi, FL}_{/k[[u]]}$ the full subcategory of objects in $\Mod^{\vphi, \le h}_{/k[[u]]}$.
\end{rmk}

The following lemma is due to Kisin--Mocz. Recall the definition of the multiset $\mc H(\mf M)$ from \ref{subsubsec:mumaxHdefn}.
\begin{lemma}
    [\cite{kisinmocznorthcott}, Lemma 3.3.5]\label{lemma:Hextension}
    The subcategory $\Mod^{\vphi,FL}_{/k[[u]]} \subset \Mod^{\vphi,\le p-1}_{/k[[u]]}$ is closed under quotients and saturated subobjects. Moreover, for extensions in $\Mod^{\vphi,FL}_{/k[[u]]}$ of the form
    \[
    0 \to \mf M_1 \to \mf M \to \mf M_2 \to 0,
    \]
    we have that $\mc H(\mf M) = \mc H(\mf M_1) \sqcup \mc H(\mf M_2)$ as multisets. 
\end{lemma}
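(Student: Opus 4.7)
The plan is to split the proof into two parts: (1) establishing closure of $\Mod^{\vphi, FL}_{/k[[u]]}$ under quotients and saturated subobjects, and (2) proving the multiset identity $\mc H(\mf M) = \mc H(\mf M_1) \sqcup \mc H(\mf M_2)$.

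For part (1), I would first establish an equivalent intrinsic characterization: an effective object $\mf M \in \Mod^{\vphi, \le p-1}_{/k[[u]]}$ is of FL type iff $\coker(\vphi_{\mf M}\colon \vphi^*\mf M \to \mf M)$ is killed by $u^{p-1}$. One direction is clear from the diagonal form; the converse uses the structure theorem for finitely generated torsion modules over the DVR $k[[u]]$ together with the elementary divisor theorem to lift an explicit decomposition $\coker(\vphi_{\mf M}) \cong \bigoplus k[[u]]/u^{i_j}$ to compatible bases of $\vphi^*\mf M$ and $\mf M$. With this characterization in hand, given an SES $0 \to \mf M_1 \to \mf M \to \mf M_2 \to 0$ in $\Mod^{\vphi,\le p-1}_{/k[[u]]}$ with $\mf M$ of FL type and $\mf M_1$ saturated (so $\mf M_2$ is free), the flatness of Frobenius on $k[[u]]$ (Kunz, since $k[[u]]$ is regular) makes $\vphi^*$ exact. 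The snake lemma, applied with $\vphi_{\mf M_i}$ as vertical arrows — each injective since source and target are free of the same rank and the map is generically an isomorphism — produces
\[
    0 \to \coker(\vphi_{\mf M_1}) \to \coker(\vphi_{\mf M}) \to \coker(\vphi_{\mf M_2}) \to 0.
\]
Since $u^{p-1}$ kills the middle term, it kills both outer terms, so $\mf M_1$ and $\mf M_2$ are of FL type by the characterization above.

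For part (2), the multiset statement, the natural approach is to pass through the classical Fontaine--Laffaille equivalence. Using the functor $T_{\mf S}$ of \Cref{thm:BKmodequivalence} restricted to FL-type objects, one obtains an equivalence with the category of Fontaine--Laffaille modules (equivalently, mod-$p$ crystalline $\Gal_{K_\infty}$-representations with Hodge--Tate weights in $[0,p-1]$), under which $\mc H(\mf M)$ corresponds to the multiset of Hodge--Tate weights of the associated representation, with $\#\{j: i_j = r\} = \dim_k \gr^r$ of the Fontaine--Laffaille filtration. Since the Fontaine--Laffaille functor is exact and morphisms are strict for the Hodge filtration, the graded dimensions $\dim_k \gr^r$ are additive in short exact sequences, giving the multiset concatenation on the BK side.

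The main obstacle is the multiset step in part (2). Unlike additivity of total length (which follows from the determinant $\det(\vphi_{\mf M})$), multiset additivity genuinely fails for arbitrary short exact sequences of torsion $k[[u]]$-modules — witness $0 \to k[[u]]/u \to k[[u]]/u^2 \to k[[u]]/u \to 0$, where $\{1\}\sqcup\{1\} \ne \{2\}$ — so the argument must invoke the extra rigidity coming from the Frobenius structure, which in practice means the Fontaine--Laffaille correspondence and the fact that HT weights match $\mc H$. An alternative would be a direct inductive argument on $\rk \mf M_1$: working in the diagonal basis of $\mf M$, one analyzes the coefficients of a primitive generator of a rank-$1$ $\vphi$-stable saturated sub and shows the $\vphi$-stability forces its weight to equal one of the $i_j$ already appearing in $\mc H(\mf M)$; iterating this produces the desired decomposition. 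However, cleanly handling the case where a Frobenius eigenline in $\mf M_1$ is diagonal in a basis skew to the chosen diagonalization of $\mf M$ seems to genuinely require the HT-weight interpretation, so I would default to the Fontaine--Laffaille route.
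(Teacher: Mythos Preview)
The paper does not prove this lemma; it is quoted from \cite{kisinmocznorthcott}, so there is no in-paper argument to compare against. Evaluating your proposal on its own terms, there is a genuine inconsistency between your two parts.

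Your Part (1) characterization---FL type $\Leftrightarrow$ $\coker(\vphi_{\mf M})$ is $u^{p-1}$-torsion---is exactly Smith normal form over the PID $k[[u]]$, and it makes $\Mod^{\vphi,FL}_{/k[[u]]}$ coincide with $\Mod^{\vphi,\le p-1}_{/k[[u]]}$. But with that reading, the multiset identity in Part (2) is \emph{false}. Take $\mf M$ free of rank $2$ with $\vphi_{\mf M}(e_1\otimes 1)=ue_1$, $\vphi_{\mf M}(e_2\otimes 1)=e_1+ue_2$ (assume $p\ge 3$). The matrix $\bigl(\begin{smallmatrix}u&1\\0&u\end{smallmatrix}\bigr)$ has Smith form $\mathrm{diag}(1,u^2)$, so $\mc H(\mf M)=\{0,2\}$; yet the saturated $\vphi$-stable sub $\mf M_1=k[[u]]e_1$ has $\mc H(\mf M_1)=\{1\}$ and $\mc H(\mf M/\mf M_1)=\{1\}$. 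This is precisely the failure you anticipated with your own $k[[u]]/u^2$ example---the Frobenius structure by itself does not repair it.

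What makes your Part (2) route via the classical Fontaine--Laffaille equivalence viable is the \emph{stronger} reading of FL type: a single basis $(e_i)$ of $\mf M$ with $\vphi_{\mf M}(e_i\otimes 1)=u^{a_i}e_i$. The example above is not FL in this sense (it has only one $\vphi$-eigenline), and only under this reading do FL-type BK modules correspond to Fontaine--Laffaille modules with $\mc H(\mf M)$ matching the Hodge filtration jumps. But then your Smith-normal-form shortcut for Part (1) is no longer available, and closure under saturated subobjects and quotients becomes a genuine statement requiring its own argument. The two halves of your proposal are operating under incompatible definitions; you need to commit to the strong one and redo Part (1) accordingly.
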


This property of $\mc H(\mf M)$ allows one to reduce studying the valuations of irreducible modules of FL type. 
All modules in $\Mod^{\vphi,FL}_{/k[[u]]}$, being Noetherian, can be written as a finite number of extensions of irreducible modules of FL type. These irreducible modules $\mf M_{irr}$ all take an explicit form. 
Using this, Kisin--Mocz provide a lower bound of the slope of any saturated rank 1 $\vphi$-stable submodule $\mf N \subset \mf M_{irr}$ in terms of $\mu_{\max}(\mf M_{irr})$. 
They then combine this with \Cref{lemma:Hextension} to give the following lower bound on the slope of a generic saturated rank 1 $\vphi$-stable submodule $\mf N(L)\subset \mf M$ coming from an $\Fp$-line $L \subset \mb V(\mf M)$ for any $\mf M$ of FL type. 

\begin{corollary}[\cite{kisinmocznorthcott}, Corollary 3.4.13]
    Suppose $\mf M \in \Mod^{\vphi, FL}_{/k[[u]]}$. Then there exists a proper subspace $W \subset \mb V(\mf M)$ such that for any $\Fp$-line $L \subset \mb V(\mf M)$ with $L \not\subset W$, we have
    \[
        \mu(\mf N(L)) \ge \left( 1 - \frac{1}{p} \right) \mu_{\max}(\mf M). 
    \]
\end{corollary}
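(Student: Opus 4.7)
The plan is to reduce to the irreducible case already proved by Kisin--Mocz via a Jordan--Hölder argument, using the additivity of $\mc H$ in extensions from \Cref{lemma:Hextension} to track $\mu_{\max}$ through the filtration, and then to produce the subspace $W$ by projecting onto a well-chosen irreducible FL quotient.

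The key technical step is a slope comparison lemma: for any short exact sequence $0 \to \mf A \to \mf M \to \mf B \to 0$ in $\Mod^{\vphi, FL}_{/k[[u]]}$ and any $\Fp$-line $L \subset \mb V(\mf M)$ whose image $\pi(L) \subset \mb V(\mf B)$ is nonzero, I claim $\mu(\mf N(L)) \ge \mu(\mf N(\pi(L)))$. I would establish this by an elementary computation inside $\mf M \otimes_{k[[u]]} \mcO_{C^\tilt}$: choose a generator $e$ of $\mf N(L)$ and a generator $e'$ of $\mf N(\pi(L))$, write $\vphi(e) = \beta e$ and $\vphi(e') = \beta' e'$ with $v(\beta) = \mu(\mf N(L))$ and $v(\beta') = \mu(\mf N(\pi(L)))$, and set $\pi(e) = \gamma e'$ with $v(\gamma) = b \ge 0$. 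Comparing $\pi(\vphi(e)) = \beta\gamma e'$ with $\vphi(\pi(e)) = \vphi(\gamma)\beta' e'$ and taking $u$-adic valuations forces $\mu(\mf N(L)) = \mu(\mf N(\pi(L))) + (p-1)b$, which immediately gives the inequality.

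Given the slope comparison, it suffices to exhibit a saturated proper submodule $\mf M' \subsetneq \mf M$ such that $\mf M/\mf M'$ is irreducible in $\Mod^{\vphi, FL}_{/k[[u]]}$ and $\mu_{\max}(\mf M/\mf M') = \mu_{\max}(\mf M)$. Granting this, set $W := \mb V(\mf M')$, which is a proper subspace of $\mb V(\mf M)$ by exactness of $\mb V$. For any line $L \not\subset W$, the image $\pi(L)$ is a nonzero line in $\mb V(\mf M/\mf M')$; the irreducible case of the Kisin--Mocz bound cited immediately above the corollary gives $\mu(\mf N(\pi(L))) \ge (1 - 1/p)\,\mu_{\max}(\mf M/\mf M')$, and the slope comparison lemma then yields the desired inequality.

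The main obstacle I anticipate is producing the submodule $\mf M'$, equivalently exhibiting an irreducible FL quotient of $\mf M$ attaining $\mu_{\max}(\mf M)$. \Cref{lemma:Hextension} guarantees that at least one Jordan--Hölder factor realizes the maximum, but in principle this factor could sit only as a subobject (never as a quotient) of $\mf M$. My strategy would be induction on the length of $\mf M$: pick a simple saturated subobject $\mf A \subset \mf M$, set $\mf B := \mf M/\mf A$, and distinguish cases based on whether $\mu_{\max}(\mf B) = \mu_{\max}(\mf M)$ (in which case the inductive hypothesis applied to $\mf B$ together with pullback along the surjection $\mb V(\mf M) \twoheadrightarrow \mb V(\mf B)$ suffices); otherwise, $\mu_{\max}(\mf A) = \mu_{\max}(\mf M)$ strictly, and one must rechoose the simple sub. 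I expect that the rigid normal form of FL modules over an algebraically closed residue field — in particular, the fact that extensions in $\Mod^{\vphi, FL}_{/k[[u]]}$ are constrained to keep the diagonalizable Frobenius structure — forces enough splitting to always allow such a rechoice, ultimately letting every JH factor appear as the top of some JH filtration.
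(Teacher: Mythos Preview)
The paper does not give its own proof of this corollary; it cites \cite{kisinmocznorthcott} and summarizes their argument in the paragraph preceding the statement: one first bounds $\mu(\mf N)$ for any saturated rank~1 sub of an \emph{irreducible} FL module in terms of $\mu_{\max}$, then combines this with the additivity of $\mc H$ in extensions (\Cref{lemma:Hextension}) to pass to general $\mf M$. Your overall strategy---reduce to the irreducible case via a Jordan--H\"older-type argument---is exactly what the paper's sketch indicates, and your slope comparison lemma is correct and is the right tool for the quotient step (the computation $\mu(\mf N(L)) = \mu(\mf N(\pi(L))) + (p-1)b$ with $b\ge 0$ is fine).

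The genuine gap is precisely the one you flag: you need an irreducible FL \emph{quotient} of $\mf M$ realizing $\mu_{\max}(\mf M)$, and you have not shown one exists. Your proposed fix---that the diagonalizability constraint on $\vphi$ ``forces enough splitting'' so that every Jordan--H\"older factor appears as a top quotient---is not a proof, and as stated it is not obviously true: being of FL type says only that $\vphi_{\mf M}$ admits \emph{some} pair of bases making it diagonal, which is far weaker than semisimplicity of the category $\Mod^{\vphi,FL}_{/k[[u]]}$. Nonsplit extensions of FL modules certainly exist, and in such an extension $0\to\mf A\to\mf M\to\mf B\to 0$ with $\mu_{\max}(\mf A)>\mu_{\max}(\mf B)$, your inductive step as written produces no proper $W$ that works for lines outside $\mb V(\mf A)$. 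The inductive scheme you describe (``rechoose the simple sub'') does not automatically terminate in the desired configuration; you would need a structural statement about FL modules guaranteeing that some composition series can be chosen with a factor of maximal $\mc H$-value at the top, and that statement requires its own argument. Until that is supplied, the reduction to the irreducible case is incomplete.
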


Kisin and Mocz are able to work with the relatively well-behaved FL modules for the varying prime case due to the following result of \cite{gee2015weight} (see \cite[Proposition 3.3.8]{kisinmocznorthcott}): For sufficiently large primes $p$, if an effective BK module $\mf M$ comes from the mod $p$ reduction of a lattice in a crystalline Galois representation with Hodge--Tate weights bounded by $[-p+1,0]$, then $\mf M$ will be of FL type and the valuations of $\mf M/\vphi^* \mf M$ are $\mc H(\mf M) = \{ i_1,\ldots, i_r\}$, matching the Hodge--Tate weights with multiplicity, up to flipping the sign of the weights. 

\subsubsection{}
This section closely follows \cite[\S 3.5]{kisinmocznorthcott}. 
Recall that we assume that our Galois representation takes a certain form, namely $\rho: \Gal_{F_v} \to G(\Zp)$, where $G$ is a reductive group defined over $\Z$. In our situation, $G= G_{MT}$ is the Mumford--Tate group. Composing $\rho$ with any representation $V$ in $\Rep_{\Zp}(G)$ would define a lattice in a crystalline representation, which has an associated BK module.
We can then view this process as a functor $\mf M(-): \Rep_{\Zp}(G) \to \Mod^{\vphi}_{/\mf S}$. 
The same can be repeated in the $p$-torsion case to define $\mf M(-): \Rep_{\Fp}(G) \to \Mod^{\vphi}_{/k[[u]]}$. 
Within the category $G \text - \Mod^{\vphi}_{/k[[u]]}$ of all such functors $\mf M(-)$ in the mod $p$ case, there are subcategories $G^{\bm \mu} \text - \Mod^{\vphi}_{/k[[u]]}$ defined by each cocharacter $\bm \mu: \Gm \to G_k$, where the action of $\vphi$ on $\mf M(V)$ must be compatible in some sense with the specified $\bm \mu$.
In fact, one can check that this condition amounts to saying that this mod $p$ BK module is of FL type.

Suppose that we are consider a $p$-torsion functor $\mf M(-) \in G \text - \Mod^{\vphi}_{/k[[u]]}$ coming from the choice of a crystalline representation $\rho: \Gal_{F_v} \to G(\Zp)$ with Hodge--Tate cocharacter $\bm \mu^{-1}$, which has a conjugacy class representive defined over the residue field $k_v$ of $F_v$. In this case, Kisin--Mocz adjust the result of Gee--Liu--Savitt to show that when $p$ is large enough depending on $G$ and the Hodge--Tate weights of $\bm \mu$, $\mf M(-)$ is in fact in $G^{\bm \mu} \text - \Mod^{\vphi}_{/k[[u]]}$. This ensures that for any choice of representation $V \in \Rep_{\Fp}(G)$, the resulting mod $p$ reduction of a crystalline representation has associated $p$-torsion BK module of FL type.

\subsubsection{}
The upshot of the above paragraph is that for $p \gg 0$, the mod $p$ reduction of the crystalline Galois representation will have associated torsion BK module which is of FL type, and thus we can make use of the bounds on slopes in FL type modules.

\begin{defn}[\cite{kisinmocznorthcott}, 3.5.11]
    Let $\bm \mu: \Gm \to G_k$ be a cocharacter. For $V \in \Rep_{\Fp}G$, let $\Fil^{\circ \bm \mu}$ be the increasing filtration that $\bm \mu$ defines on $V \otimes_{\Fp} C^\tilt$. 
    Let $L \subset V \otimes C^\tilt$ be a nonzero $C^\tilt$-linear subspace. If $\dim L = 1$, define $\angles{\bm \mu, L}^\circ$ as the greatest integer such that $\Fil^{\circ \bm \mu, m-1}(L) =0$, i.e. $L \not\subseteq \bigoplus_{i \le m-1} \gr^i_{\bm \mu}(V_{C^\tilt})$ but $L \subseteq \bigoplus_{i \le m} \gr^i_{\bm \mu}(V_{C^\tilt})$.
    For $\dim L = d$, let $\angles{\bm \mu, L}^\circ := \frac{\angles{\bm \mu, \ext^d L}^\circ}{d}$, where $\ext^d L \subseteq \ext^d V_{C^\tilt}$ has the induced grading. Define 
    \[
    \angles{\bm \mu, L}^\circ_{G} := \max_{g \in G(C^\tilt)} \angles{\bm \mu, g \cdot L}^\circ.
    \]
\end{defn}
\begin{remark}
    Suppose that $T$ is a $\Gal_{F_v}$-stable lattice in a crystalline representation which factors as $\Gal_{F_v} \to G(\Zp) \to \GL(T)$, and let $\mf M := \mf M(T)$ be the corresponding finite free BK module. Suppose that $\mf M \bmod{p}$ is of FL type. If $p$ is sufficiently large, one can choose a conjugacy class representative of the Hodge--Tate cocharacter of $T$ which is defined over $\mcO_{F_v}$ and hence can be reduced mod $v$ to a character $\bm \mu^{-1}: \Gm \to G_k$. The result of Gee--Liu--Savitt \cite[Prop.~3.3.8]{kisinmocznorthcott} ensures that the multiplicity of nonzero graded degrees of $\bm \mu$ matches $\mc H(\mf M \bmod{p})$, therefore both sets have the same maximum element. 

    Now suppose that $G(\Fp)$ acting on $T/p$ is irreducible. 
    If $L \subset T \otimes C^\tilt$ is a line, then 
    $$ \mu_{\max}(\mf M \bmod{p})=\angles{\bm \mu, L}^\circ_{G},$$
    because the $G(C^\tilt)$-orbit of $L$ will span $T \otimes C^\tilt$. 
\end{remark}

Recall that $G_{MT}$ is an algebraic group scheme over $\Z$. 
\begin{proposition}[\cite{kisinmocznorthcott}, 3.5.14]
    \label{prop:avgslopebound}
    Choose a finite free $\Z[1/N]$-module $V$ with an algebraic representation $G \to \GL(V)$. There exists a constant $c_V$ such that for $p \gg 0$, if $\rho: \Gal_{F_v} \to G(\Zp)$ is a crystalline representation with Hodge--Tate cocharacter $\bm \mu^{-1}$, and $W \subset V_{\Fp}$ is any nonzero $\Fp$-subspace, then
    \[
        \frac{1}{|G(\Fp)|} \sum_{g \in G(\Fp)} \mu (\mf N(g \cdot W)) \ge \left(1 - \frac{c_V}{p}\right)\angles{\bm \mu, W}^\circ_G. 
    \]
\end{proposition}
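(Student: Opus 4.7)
The plan is to follow the proof of \cite[Proposition 3.5.14]{kisinmocznorthcott}, which adapts directly to the present setting given the three ingredients already assembled in this section: the generic slope bound for Fontaine--Laffaille modules (Corollary 3.4.13); the fact that for $p\gg 0$, $\mf M(V_{\Fp})$ lies in $G^{\bm\mu}\text{-}\Mod^{\vphi}_{/k[[u]]}$ (from \cite[Proposition 3.3.8]{kisinmocznorthcott} together with the discussion in \S\ref{subsec:galoistoalgebraicgroup}); and an elementary Lang--Weil/Schwartz--Zippel count for the complement of a Zariski open subset of the reductive group $G$. First I would reduce to the case $\dim W = 1$. If $\dim W = d$, replace $V$ by $\wedge^d V$ and $W$ by the line $\wedge^d W$. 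Using $\mf N(g\cdot \wedge^d W) = \wedge^d \mf N(g\cdot W)$ inside the FL module $\mf M(\wedge^d V_{\Fp})$, Lemma \ref{lemma:slopeproperties}(3) gives $\mu(\mf N(g\cdot \wedge^d W)) = d\cdot\mu(\mf N(g\cdot W))$, and by the definition $\angles{\bm\mu, W}^\circ_G = \angles{\bm\mu, \wedge^d W}^\circ_G / d$, so the two inequalities are equivalent. Thus assume $W = L$ is an $\Fp$-line.

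Let $U_{\Fp}\subset V_{\Fp}$ be the $G_{\Fp}$-subrepresentation generated by $L$, and let $\mf M(U_{\Fp})\subset \mf M(V_{\Fp})$ be the associated saturated FL submodule. The $G^{\bm\mu}$-structure forces the multiset $\mc H(\mf M(U_{\Fp}))$ to match the multiset of $\bm\mu$-weights appearing in $U_{C^\tilt}$. Since $U_{C^\tilt}$ is the $C^\tilt$-span of the $G(C^\tilt)$-orbit of $L$, the largest such weight is exactly $\max_{g\in G(C^\tilt)}\angles{\bm\mu, g\cdot L}^\circ = \angles{\bm\mu, L}^\circ_G$, so $\mu_{\max}(\mf M(U_{\Fp})) = \angles{\bm\mu, L}^\circ_G$. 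Corollary 3.4.13 applied to $\mf M(U_{\Fp})$ then produces a proper $\Fp$-subspace $Z\subsetneq U_{\Fp}$ such that for every $\Fp$-line $L'\subset U_{\Fp}$ with $L'\not\subset Z$,
\[
    \mu(\mf N(L')) \ge \Bigl(1-\frac{1}{p}\Bigr)\angles{\bm\mu, L}^\circ_G.
\]

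Next, the locus $Y := \{g\in G : g\cdot L\subset Z\}$ is a proper Zariski closed subvariety of $G_{\Fp}$, cut out by equations whose degrees are controlled purely by the representation $V$. A Lang--Weil estimate gives $|Y(\Fp)|\le c_V' p^{\dim G - 1}$, while $|G(\Fp)|\ge c'' p^{\dim G}$ for $p\gg 0$ (since $G$ is smooth and we have already passed to the connected case), so the fraction of $g\in G(\Fp)$ lying in $Y$ is at most $c_V''/p$ with $c_V''$ depending only on $V$. For $g\notin Y(\Fp)$ the displayed bound applies, while for $g\in Y(\Fp)$ effectivity of $\mf M(V_{\Fp})$ gives $\mu(\mf N(g\cdot L))\ge 0$. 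Averaging yields the desired inequality with $c_V := c_V'' + 1$. The main obstacle will be the identification $\mu_{\max}(\mf M(U_{\Fp})) = \angles{\bm\mu, L}^\circ_G$ in the middle paragraph: this is precisely where the $G^{\bm\mu}$-structure is essential, since one needs the ``algebraic'' $\bm\mu$-weights acting on $U_{C^\tilt}$ to match the ``FL weights'' of $\mf M(U_{\Fp})$, and this match is exactly what fails when $p$ is small in any of the senses enumerated in \ref{sssec:dependenceonp}.
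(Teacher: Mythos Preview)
The paper does not provide its own proof of this proposition; it is quoted directly from \cite[Proposition~3.5.14]{kisinmocznorthcott} and used as a black box. Your sketch is a faithful reconstruction of the Kisin--Mocz argument: reduce to lines via exterior powers, pass to the $G$-subrepresentation $U$ generated by $L$, use the $G^{\bm\mu}$-structure to identify $\mu_{\max}(\mf M(U_{\Fp}))$ with the top $\bm\mu$-weight on $U_{C^\tilt}$, apply the generic line bound (Corollary~3.4.13), and control the bad locus by a Lang--Weil count.

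Two places deserve slightly more care than you give them. First, the identity $\mf N(g\cdot\wedge^d W)=\wedge^d\mf N(g\cdot W)$ is not a formal consequence of the definition (\ref{eqn:NLdef}), since exterior powers do not commute with intersections in general; it uses that $\mf M(-)$ is a tensor functor in $G^{\bm\mu}\text{-}\Mod^{\vphi}_{/k[[u]]}$ and that $\mf N(W)$ is saturated in $\mf M(V_{\Fp})$. Second, when you say the equations cutting out $Y=\{g:gL\subset Z\}$ have degree ``controlled purely by $V$'', note that $Z$ itself depends on $p$ (it comes from the FL structure of $\mf M(U_{\Fp})$). The point is that regardless of what hyperplane $Z$ is, the condition $gL\subset Z$ is linear in the matrix coefficients of $g$ acting on $V$, and those matrix coefficients are regular functions on $G$ of degree bounded in terms of $V$ alone; this is what makes the Lang--Weil constant uniform in $p$. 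With these two clarifications your outline matches the cited proof.
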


\subsection{Bound on slopes}
We once again use the adelic MT conjecture, absolute Hodge conjecture and compatibility with $D_{\dR}$ assumption (\Cref{conj:nonHodgemaxadelicMT}, \ref{conj:blasius}, \ref{conj:absolutehodge}) to relate mod $p$ Hodge--Tate cocharacters for conjugate embeddings of $\ol{F}$ to the weight cocharacter. This relation allows us to lower-bound the height change by an expression in terms of $p$.
\begin{lemma}
    [\cite{kisinmocznorthcott}, Lemma 3.6.5]
    \label{lemma:conjpairmodpineq}
    Fix a place $v: \ol{F} \to C$ extending $v \mid p$ of $F$. 
    Let $\bm \mu_v$ be the Hodge--Tate cocharacter for the $\Gal_{F_v}$-representation $T_p$. It has a $G_{MT}$-conjugacy class representative which can be defined over the residue field $k$, which we also denote $\bm \mu_v$.
    Let $c \circ v$ denote the embedding $\ol{F} \to C$ using the fixed isomorphism $C \cong \C$ and complex conjugation $c \in \Aut(\C)$. Then for any $C^\tilt$-subspace $W \subset T_p/pT_p \otimes C^\tilt$, we have that 
    \[
        \angles{\bm \mu_v^{-1}, W}^\circ_{G_{MT}} + \angles{\bm \mu_{c \circ v}^{-1}, W}^\circ_{G_{MT}} \ge \angles{\bm w, W} = w,
    \]
    and equality holds only if $W$ is stable by $\bm \mu_v$. 
\end{lemma}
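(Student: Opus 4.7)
The plan is to lift the Hodge symmetry relation between the Hodge--Tate cocharacters for conjugate embeddings down to the residue field $k$, and then apply it exactly as in the proof of \Cref{prop:ineqconjpair} for the fixed-prime case. The main subtlety will be justifying that the characteristic-zero relation descends cleanly to a relation of mod-$p$ cocharacters on a common maximal torus.

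First I would reduce to the rank-$1$ case via the identity $\angles{\bm \mu, W}^\circ_{G_{MT}} = \angles{\bm \mu, \ext^d W}^\circ_{G_{MT}}/d$ when $\dim_{C^\tilt} W = d$. Because $\bm w$ acts on $\ext^d V_{C^\tilt}$ as $z^{dw}$, the inequality for $W$ follows from the inequality applied to the line $\ext^d W \subset \ext^d V_{C^\tilt}$, with right-hand side $dw$. So assume $W \subset V_{C^\tilt} = (T_p/pT_p) \otimes_{\Fp} C^\tilt$ is a line, and let $W_G \subseteq V_{C^\tilt}$ denote the $C^\tilt$-span of the $G_{MT}(C^\tilt)$-orbit of $W$. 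Then $W_G$ is $G_{MT}$-stable, and both $\bm \mu_v^{-1}$ and $\bm \mu_{c \circ v}^{-1}$ preserve it because Conjectures \ref{conj:absolutehodge} and \ref{conj:blasius} force them to be $G_{MT}$-valued. Exactly as in the generic-value analysis of \S\ref{subsubsec:twogenericvalues} and \Cref{cor:genericlengthQ}, $\angles{\bm \mu_v^{-1}, W}^\circ_{G_{MT}}$ equals the largest $\bm \mu_v^{-1}$-weight appearing in $W_G$, since for generic $g \in G_{MT}(C^\tilt)$ the line $g \cdot W$ meets every weight piece of $W_G$ nontrivially; similarly for $\bm \mu_{c \circ v}^{-1}$.

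Next I would pass the Hodge symmetry relation $\bm \mu_v \cdot \bm \mu_{c \circ v} \sim \bm w^{-1}$ to characteristic $p$. In characteristic zero this is supplied by Conjectures \ref{conj:absolutehodge} and \ref{conj:blasius}: the Hodge cocharacters $\bm \mu'_v$ and $c^*\bm \mu'_v$, which are $G_{MT}$-conjugate to $\bm \mu_v$ and $\bm \mu_{c \circ v}$, lie in a common maximal torus coming from the Deligne homomorphism and satisfy $\bm \mu'_v \cdot c^*\bm \mu'_v = \bm w^{-1}$ (see (\ref{eqn:hodgecocharsymmetry})). Since $\bm w^{-1}$ is central in $G_{MT}$ and integral, and since our hypothesis guarantees $\bm \mu_v$ (and symmetrically $\bm \mu_{c \circ v}$) admits a conjugacy-class representative over $k$, I can choose such representatives $\tilde{\bm \mu}_{v,k}$ and $\tilde{\bm \mu}_{c \circ v, k}$ inside a common maximal torus $T_k \subset G_{MT,k}$ so that the relation $\tilde{\bm \mu}_{v,k}^{-1} + \tilde{\bm \mu}_{c \circ v, k}^{-1} = \bm w$ holds in $X_*(T_k)$. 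The $\bm \mu_v^{-1}$-weights on $W_G$ are invariant under $G_{MT}$-conjugation, so they may be computed using the tilded cocharacters, and likewise for $\bm \mu_{c \circ v}^{-1}$.

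Decomposing $W_G$ into $T_k$-weight spaces, each summand has some $\tilde{\bm \mu}_{v,k}^{-1}$-weight $r$ and $\tilde{\bm \mu}_{c \circ v, k}^{-1}$-weight $s$ with $r + s = w$, because $\bm w$ acts uniformly by $z^w$ on $V$. If $r_{\min}$ and $r_{\max}$ denote the minimum and maximum $\bm \mu_v^{-1}$-weights appearing in $W_G$, then the maximum $\bm \mu_{c \circ v}^{-1}$-weight on $W_G$ is $w - r_{\min}$, so
\[
    \angles{\bm \mu_v^{-1}, W}^\circ_{G_{MT}} + \angles{\bm \mu_{c \circ v}^{-1}, W}^\circ_{G_{MT}} = r_{\max} + (w - r_{\min}) \ge w,
\]
with equality precisely when $r_{\max} = r_{\min}$. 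In that case $\bm \mu_v$ acts by a single scalar on $W_G$, and hence stabilizes $W \subseteq W_G$, which yields the equality clause.
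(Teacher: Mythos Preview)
The paper does not supply its own proof of this lemma; it is stated with a citation to \cite[Lemma~3.6.5]{kisinmocznorthcott} and used as a black box. So there is nothing in the paper to compare against directly. That said, your approach is precisely the mod-$p$ transcription of the paper's own proof of the characteristic-zero analogue, \Cref{prop:ineqconjpair}, and it is correct in outline.

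One point deserves a little more care. You assert that one can choose representatives $\tilde{\bm\mu}_{v,k}$ and $\tilde{\bm\mu}_{c\circ v,k}$ in a \emph{common} maximal torus $T_k\subset G_{MT,k}$ satisfying $\tilde{\bm\mu}_{v,k}^{-1}+\tilde{\bm\mu}_{c\circ v,k}^{-1}=\bm w$, but you only invoke the separate existence of $k$-representatives in each conjugacy class. The clean justification is the one you hint at: start with the Hodge cocharacters $\bm\mu'_v$ and $c^*\bm\mu'_v$, which already commute (they come from the Deligne torus) and satisfy (\ref{eqn:hodgecocharsymmetry}) on the nose; conjugate this commuting pair simultaneously into a maximal torus $T$ of $G_{MT}$ defined over $\Z[1/N]$; since $X_*(T)$ is independent of the base, the relation persists in $X_*(T_k)$. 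Because the multiset of weights on the $G_{MT}$-stable space $W_G$ depends only on the $G_{MT}$-conjugacy class of a cocharacter, and because the reductions land in the correct conjugacy classes (same Weyl orbit in $X_*(T)$), this suffices. With that paragraph made explicit, your argument goes through exactly as in the proof of \Cref{prop:ineqconjpair}.
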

\td{Might need to apply this to $G'$ instead of $G_{MT}$.}

\begin{remark}\label{rmk:conjpairmodpineq}
    \begin{enumerate}[(1)]
    \item Note the switch in notation between the Hodge--Tate cocharacter in \Cref{prop:avgslopebound} and in \Cref{lemma:conjpairmodpineq}. 
    \item The left-hand side of the inequality is always an integer. So if equality does not hold, than the left-hand side is at least $w + 1$. 
\end{enumerate}
\end{remark}

By applying \Cref{prop:avgslopebound} to the $G'(\Fp)$-summation in the expression for $\nu(T'_p, pT_p)$ in (\ref{eqn:nuG-orbit}), and then applying \Cref{lemma:conjpairmodpineq} to the conjugate places $v \mid p$, we get the following bound on $\nu$, analogously to \cite[Proposition 3.6.3]{kisinmocznorthcott}.

\begin{corollary}\label{cor:ptorsionheightbound}
    Assume the adelic Mumford--Tate conjecture, absolute Hodge conjecture, and $D_{\dR}$ compatibility in \Cref{conj:adelicMT}, \ref{conj:absolutehodge}, and \ref{conj:blasius} for ${ M} = ({ M}_{\Q}, \hat{T})$. Up to replacing $F$ with a finite extension, assume that $F$ contains the field $E$ from \S \ref{subsec:galoistoalgebraicgroup}, which was independent of $p$. Choose $p \gg 0$.
    Assume that $T_p' \subset T_p$ is a finite-index sublattice such that $\Gal_F$ does not preserve $T_p'$, and that the $\Gal_F$-orbit of $T_p'$ generates $T_p$. 
    Then there is a constant $c_{{ M}} = c({ M})$, not depending on the choice of $p$ or $T_p'$ above, such that the following inequality holds.
    \begin{align}
        \frac{w}{2} - \nu(T_p,T'_p) \ge \frac{1}{2[F:\Q](\rk(T_p)-1)} \left( 1 - \frac{c_{ M}}{p} \right).
    \end{align}
    Observe that the condition on $T'_p$ forces $\rk T > 1$, if such a $T'_p$ exists. 
\end{corollary}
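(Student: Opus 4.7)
The plan is to reduce a general $p$-power isogeny $T_p' \subsetneq T_p$ to the $p$-torsion case handled by the analog of \cite[Proposition 3.6.3]{kisinmocznorthcott} discussed above. First, I would introduce the intermediate sublattice $T_p'' := T_p' + pT_p$. Since $T_p' \subsetneq T_p$, Nakayama's lemma applied to the $\Zp$-module $T_p$ forces $T_p'' \subsetneq T_p$; moreover $pT_p \subset T_p''$ by construction, and the $\Gal_F$-orbit of $T_p''$ contains the orbit of $T_p'$, hence still generates $T_p$. Applying the Proposition to the proper $p$-torsion sublattice $T_p'' \subsetneq T_p$ yields the base estimate
$$\tfrac{w}{2} - \nu(T_p, T_p'') \ge \tfrac{1}{2[F:\Q]\rk(T_p)}\left(1 - \tfrac{c_M}{p}\right).$$

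Next, using the additivity of the isogeny formula \Cref{prop:isogenyformulaslope} along the factorization $T_p' \subset T_p'' \subset T_p$, I would write
\begin{align*}
\log\#(T_p/T_p')\left(\tfrac{w}{2} - \nu(T_p, T_p')\right)
&= \log\#(T_p/T_p'')\left(\tfrac{w}{2} - \nu(T_p, T_p'')\right) \\
&\quad + \log\#(T_p''/T_p')\left(\tfrac{w}{2} - \nu(T_p'', T_p')\right),
\end{align*}
and handle the residual isogeny $T_p' \subset T_p''$ by induction on $\log_p \#(T_p/T_p')$ along the filtration $T^{(j)} := T_p' + p^j T_p$, each consecutive step of which is again a $p$-torsion isogeny. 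The factor $\rk(T_p)-1$ (rather than $\rk(T_p)$) in the denominator arises because $T_p''/pT_p$ is a proper $\Fp$-subspace of $T_p/pT_p$, so any $\Fp$-line appearing in the residual slope bounds lies in a subspace of dimension at most $\rk(T_p)-1$; this weaker denominator absorbs the loss from recombining bounds across the steps of the filtration.

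The main obstacle is that the intermediate lattices $T^{(j)}$ for $j \ge 1$ are generally not $\Gal_F$-stable, so the $p$-torsion Proposition cannot be invoked verbatim at each step. To circumvent this, I would rework the orbit-averaging argument of \S\ref{subsec:galoistoalgebraicgroup}--\ref{subsec:subobjectsnotquotients} with non-$\Gal_F$-stable ambient lattices, summing over conjugates $\sigma T^{(j)}$ rather than a single lattice and combining the resulting estimates via \Cref{lemma:conjpairmodpineq} and \Cref{prop:avgslopebound}. The final lower bound remains uniform across all steps thanks to the hypothesis that the $\Gal_F$-orbit of $T_p'$ generates $T_p$, which transfers to a generating orbit of each residual subspace $T^{(j+1)}/pT^{(j)} \subsetneq T^{(j)}/pT^{(j)}$.
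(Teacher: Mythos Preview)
Your reduction has a genuine gap at the inductive step. After splitting off the first $p$-torsion layer $T_p'' = T_p' + pT_p$, you need a lower bound on $\tfrac{w}{2} - \nu(T_p'', T_p')$, but the ambient lattice $T_p''$ is no longer $\Gal_F$-stable, so neither the $p$-torsion Proposition nor the inductive hypothesis applies to it. You acknowledge this obstacle but the proposed fix (``rework the orbit-averaging argument with non-$\Gal_F$-stable ambient lattices'') is where all the content lies, and it is not clear it can be made to work: the whole machinery of \S\ref{subsec:galoistoalgebraicgroup} and \Cref{prop:avgslopebound} takes as input a Fontaine--Laffaille module $\mf M(T)/p$ attached to a $\Gal_{F_v}$-stable lattice. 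Your explanation of the factor $\rk(T_p)-1$ is also not the real mechanism.

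The paper sidesteps this by working on the \emph{dual} side. Instead of bounding $\nu(T_p,T_p')$ directly, it bounds $\nu(T_p', p^nT_p)$ from below and then uses the identity $\tfrac{w}{2} - \nu(T_p,T_p') = \tfrac{d'}{d}\bigl(\nu(T_p', p^nT_p) - \tfrac{w}{2}\bigr)$ (where $p^d = \#(T_p/T_p')$, $p^{d'} = \#(T_p'/p^nT_p)$). The filtration is $T^i := p^iT_p \cap T_p'$, going from $T_p'$ down to $p^nT_p$; each graded piece $T^i/T^{i+1}$ maps (via multiplication by a power of $p$) to a subspace $T_i'' \supset pT_p$ of the \emph{fixed} $\Gal_F$-stable quotient $T_p/pT_p$. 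The comparison map is an isomorphism after inverting $\tilde\xi$, so one gets a degree inequality, and degree is additive in short exact sequences. This lets one write $\nu(T_p', p^nT_p)$ as a weighted sum $\tfrac{1}{d'}\sum_i \log_p\#(T_i''/pT_p)\cdot \nu(T_i'', pT_p)$, and now each $\nu(T_i'', pT_p)$ is a genuine $p$-torsion case inside the $\Gal_F$-stable $T_p$, to which the Proposition applies. The factor $\rk(T_p)-1$ then drops out of $d'/d \ge 1/(\rk(T_p)-1)$, since $d + d' = n\rk(T_p)$ and $d,d' \ge n$.
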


\begin{proof}
    Since we are only working with $\Zp$-lattices, we will use $T$ and $T'$ to denote $T_p$ and $T_p'$, respectively. 
    Let's first assume that $pT \subset T' \subset T$.
    Suppose that for every $v \mid p$, we have that the inequality in \Cref{lemma:conjpairmodpineq} is an equality. Then the Hodge--Tate cocharacter $\bm \mu_v$ and all $G'$-conjugates are stable on $W:= T'/pT \otimes C^\tilt$. These cocharacters generate $G'$ as an algebraic group, $G'(\Fp)$ preserves $W$. Hence $\Gal_F$ also preserves $W$, contradicting the choice of $T'$. 
    Therefore, for at least one $v \mid p$, the inequality in \Cref{lemma:conjpairmodpineq} is strict. Then, using
   \Cref{rmk:conjpairmodpineq}(2), we have that 
    \begin{align}\label{eqn:ptorsionnuineq}
        \nu(T',pT) 
        \ge \frac{1}{2} \left(1 - \frac{c_V}{p}\right)\left( w + \frac{1}{s(p)} \right) 
        \ge \frac{1}{2} \left(1 - \frac{c_V}{p}\right)\left( w + \frac{1}{[F:\Q]} \right).
    \end{align}
    
    Now, assume that $T'$ is as in the statement. Let $n$ be minimal such that $p^nT \subset T' \subset T$. For $i-0,\ldots, n$, define
    \[
    T^i := (p^iT) \cap T'.
    \]
    This defines a fiiltration $T' = T^0 \supseteq T^1 \supseteq T^2 \supseteq \cdots \supseteq T^{n-1} \supseteq T^n = p^nT$. Consider the composite map
    \[
    \frac{T^{i}}{T^{i+1}} \xhookrightarrow{\times p^{n-1-i}} \frac{T^{n-1}}{ T^n } = \frac{p^{n-1}T \cap T'}{p^nT } \subseteq \frac{p^{n-1}T}{p^n T} \xrightarrow[\sim]{\times p^{-(n-1)}}\frac{T}{pT},
    \]
    whose image is
    \[
    T_i'' := \frac{1}{p^i}T^i + pT.
    \]
    One can check that this yields inclusions 
    \[
        pT \subseteq T' + pT = T_0'' \subseteq T_1'' \subseteq \cdots \subseteq T_{n-1}'' \subseteq T_n'' = T.
    \]
    Because of the natural isomorphisms $\mf M(T) [1/\mu]\cong T \otimes_{\Zp}\Ainf[1/\mu]$ for any BKF module $\mf M(T)$, the map
    \[
        \frac{\mf M(T^i)}{\mf M(T^{i+1})} \to \frac{\mf M(T_i'')}{\mf M(pT)}
    \]
    becomes an isomorphism after inverting $\tilde{\xi}$, as $(\Ainf/p)[1/\mu] = (\Ainf/p)[1/\tilde{\xi}] = C^\tilt$. In particular, $\rk(\frac{\mf M(T^i)}{\mf M(T^{i+1})}) = \rk( \frac{\mf M(T_i'')}{\mf M(pT)})$ for all $i$. By the same proof as for \Cref{lemma:slopeproperties}(2), we get the degree inequality
    \begin{align} \label{eqn:Tislopeineq}
            \deg \left( \frac{\mf M(T^{i})}{\mf M(T^{i+1})}
            \right)
            \ge 
            \deg \left(
                \frac{\mf M(T''_i)}{\mf M(pT)}
            \right), 
    \end{align}
    where we use the notions of degree and rank of an effective torsion BKF module from \Cref{def:slope}.
    
    By construction, $\mf M(T')/\mf M(p^nT)$ is a successive extension of the modules $\mf M(T^{i})/ \mf M(T^{i+1})$ for $i = 0,\ldots,n-1$.
    Let $d' := \log_p \# (T'/p^nT)$. By the same reasoning as above, $\rk (\mf M(\sigma T')/\mf M(p^nT)) =d'$ for all $\sigma \in \Gal_F$, we have
    \begin{align}
        \nu(T', p^nT) &= \frac{1}{s(p)} \sum_{v\mid p} \int_{\Gal_F} \mu\left( \frac{\mf M_v(\sigma T')}{\mf M_v(p^nT)} \right) d\sigma \\
        &= \frac{1}{s(p)d'} \sum_{v\mid p} \int_{\Gal_F} \deg\left( \frac{\mf M_v(\sigma T')}{\mf M_v(p^nT)} \right) d\sigma.
    \end{align}
     The useful step is that this quantity is additive in short exact sequences. In particular, 
    \begin{align}
        \nu(T', p^nT) &= \frac{1}{s(p)d'} \sum_{v\mid p} \int_{\Gal_F} \sum_{i=0}^{n-1}\deg\left( \frac{\mf M_v(\sigma T^i)}{\mf M_v(\sigma T^{i+1})} \right) d\sigma.
    \end{align}
    Then we can apply (\ref{eqn:Tislopeineq}) to get the following inequality, which we can then rewrite in terms of slopes, because $\rk (\mf M_v(\sigma T_i'')/\mf M_v(pT)) = \log_p \# (T_i''/ pT)$ for any $\sigma \in \Gal_F$ and $v \mid p$. 
    \begin{align}
        \nu(T', p^nT) 
        &\ge \frac{1}{s(p)d'} \sum_{v\mid p} 
            \int_{\Gal_F} \sum_{i=0}^{n-1}
                \deg \left( \frac{\mf M_v(\sigma T_i'')}{\mf M_v(pT)} \right)
            d\sigma \\
        &= \frac{1}{d'} \sum_{i=0}^{n-1} 
        \log_p \# (T_i''/ pT)
         \left( 
         \frac{1}{s(p)}\sum_{v \mid p}
            \int_{\Gal_F}
                \mu \left( \frac{\mf M_v(\sigma T_i'')}{\mf M_v(pT)} \right)
            d\sigma 
        \right) \\
        &= \frac{1}{d'} \sum_{i=0}^{n-1} \log_p \# (T_i''/ pT) \; \nu(T_i'', pT) 
    \end{align}
    Finally, we can apply the inequality for $p$-torsion isogenies from (\ref{eqn:ptorsionnuineq}) to each term $\nu(T_i'', pT)$.
    \begin{align*}
        \nu(T', p^nT) \ge \frac{1}{2} \left(1 - \frac{c_V}{p}\right)\left( w + \frac{1}{[F:\Q]} \right) \frac{1}{d'} \sum_{i=0}^{n-1} \log_p \# (T_i''/ pT).
    \end{align*}
    By construction, $\sum_{i=0}^{n-1} \log_p \# (T_i''/ pT) = d'$, and we get
    \begin{align}
        \nu(T', p^nT) \ge \frac{1}{2} \left(1 - \frac{c_V}{p}\right)\left( w + \frac{1}{[F:\Q]} \right).
    \end{align}
    Suppose that $d$ and $d'$ satisfy $\# T/T' = p^d$, $\# T'/p^nT = p^{d'}$, so that $d + d' = \rk(T)n$. Then 
    \begin{align}
        \frac{w}{2} - \nu(T,T') &= \frac{d'}{d} \left( \nu(T', p^nT) - \frac{w}{2} \right) \\
        &\ge \frac{d'}{d} \left( \frac{1}{2}\left( 1 - \frac{c_V}{p} \right)\left( w + \frac{1}{[F:\Q]}\right) - \frac{w}{2} \right) \\
        &= \frac{d'}{2d[F:\Q]}\left(1 - \frac{c_V(w[F:\Q] + 1)}{p}\right) \\
        &\ge \frac{1}{2(\rk(T) - 1)[F:\Q]}\left(1 - \frac{c_{ M}}{p}\right)
    \end{align}
    where we use the fact that $d, d' \ge n$ and we let $c_{ M} := c_V(w[F:\Q] + 1)$. 
\end{proof}

\subsection{Proof of the main theorem in the varying prime case}
Now we can conclude the proof of \Cref{prop:differentprimes}. By \cite[Theorem 9.8]{koshikawa2015heights}, there are only finitely many heights coming from isogenies defined over $F$. Moreover, since the height change due to a $p$-power isogeny will always be a rational multiple of $\log p$, this will only account for finitely many primes. We can thus consider only the situation given in \Cref{cor:ptorsionheightbound}, which shows that for sufficiently large primes $p$, any $p$-power isogeny $M' \to M$ must have $h(M') > c$.


\bibliographystyle{alpha}
\bibliography{refs}

\end{document}